\newcommand{\norm}[1]{\left\lVert#1\right\rVert}
\newcommand{\abs}[1]{\left|#1\right|}
\newcommand{\menge}[1]{\left\lbrace#1\right\rbrace}
\newcommand{\intD}{\;\mathrm{d}}
\newcommand{\rom}[1]{\uppercase\expandafter{\romannumeral #1\relax}}
\newcommand{\landauO}{\mathcal{O}}
\newtheorem{theorem}{Theorem}
\newtheorem{definition}{Definition}
\newcommand{\inner}[1]{\left< #1 \right>}
\title{Neural network-based, structure-preserving entropy closures for the Boltzmann moment system 
\thanks{\textit{\underline{Citation}}: 
\textbf{Steffen Schotthöfer, Tianbai Xiao, Martin Frank and Cory D. Hauck. Neural network-based, structure-preserving entropy closures for the Boltzmann moment system.}} 
}
\author{
  Steffen Schotthöfer \\
  Faculty of Mathematics \\
  KIT \\
  Karlsruhe, Germany\\
  \texttt{steffen.schotthoefer@kit.edu} \\
   \And
   Tianbai Xiao \\
  Faculty of Mathematics \\
  KIT \\
  Karlsruhe, Germany\\
  \texttt{tianbai.xiao@kit.edu} \\
  \And
  Martin Frank\\\
  Faculty of Mathematics \\
  KIT \\
  Karlsruhe, Germany\\
  \texttt{martin.frank@kit.edu} \\
   \And
  Cory D. Hauck\\\
  Computer Science and Mathematics Division \\
  Oak Ridge National Laboratory, and
Department of Mathematics (Joint Faculty),  University of Tennessee \\
  Oak Ridge, TN 37831 USA\\
  \texttt{hauckc@ornl.gov} \\
}
\begin{document}
\maketitle

\begin{abstract}

This work presents neural network based minimal entropy closures for the moment system of the Boltzmann equation, that preserve the inherent structure of the system of partial differential equations, such as entropy dissipation and hyperbolicity. The described method embeds convexity of the moment to entropy map in the neural network approximation to preserve the structure of the minimal entropy closure. Two techniques are used to implement the methods. The first approach approximates the map between moments and the minimal entropy of the moment system and is convex by design. The second approach approximates the map between moments and Lagrange multipliers of the dual of the minimal entropy optimization problem, which present the gradients of the entropy with respect to the moments, and is enforced to be monotonic by introduction of a penalty function. 
We derive an error bound for the generalization gap of convex neural networks which are trained in Sobolev norm and use the results to construct data sampling methods for neural network training.
Numerical experiments are conducted, which show that neural network-based entropy closures provide a significant speedup for kinetic solvers while maintaining a sufficient level of accuracy. The code for the described implementations can be found in the Github repositories~\cite{NeuralEntropy,KITRT}.

\end{abstract}

\keywords{Kinetic Theory \and Moment Methods \and Entropy Closures \and Neural Networks \and Convexity}

\section{Introduction}
In many applications, a macroscopic description of the physical systems  is no longer applicable and one has to rely on a more general description, which is given by kinetic equations such as the Boltzmann equation.
Example include neutron transport~\cite{NeutronTransport}, radiative transport~\cite{Chahine1987FoundationsOR} and semiconductors~\cite{Markowich1990SemiconductorE} and rarefied gas dynamics~\cite{BoltzmannApplications}.
The Boltzmann equation is a high dimensional integro-differential equation, with phase space dependency on space and particle velocity. This high dimensionality of the phase space presents a severe computational challenge for large scale numerical simulations. \\
Several methods for phase space reduction have been proposed to solve the Boltzman equation, including the discrete ordinate/velocity methods ~\cite{NeutronTransport,Camminady2019RayEM,xiao2020velocity,XIAO2021110689,xiao2017well} and moment methods~\cite{AlldredgeFrankHauck,AlldredgeHauckTits,GarretHauck,KRISTOPHERGARRETT2015573, Levermore}.
Discrete ordinate methods evaluate the velocity space at specific points, which yields a system of equations only coupled by the integral scattering operator. While computationally efficient, these methods suffers from numerical artifacts, which are called ray effects~\cite{Camminady2019RayEM}.
Moment methods eliminate the dependency of the phase space on the velocity variable by computing the moment hierarchy of the Boltzmann equation. Due to the structure of the advection term, the resulting moment system is typically unclosed. One distinguishes moment methods according to the modelling of their closure. 
The classical $P_N$ closure uses a simple truncation that results in a system of linear hyperbolic equations. The main drawback of this method is its numerical artifacts, specifically large oscillations of the particle density, which may even result in negative particle concentrations. This effect is particularly present in the streaming particle regime~\cite{Brunner2002FormsOA}. \\
A moment closure, which preserves important physical and mathematical properties~\cite{Levermore1996MomentCH} of the Boltzmann equation, such as entropy dissipation, hyperbolicity, and the H-theorem, is constructed by solving a convex constrained optimization problem  based on the entropy minimization principle~\cite{AlldredgeHauckTits, Levermore}. The method, which is commonly referred to as $M_N$ closure, is accurate in the diffusive limit~\cite{GOUDON2013579} and unlike the $P_N$ closure, it is also accurate in the streaming limit~\cite{DUBROCA1999915}. Although the $M_N$ closure is methodically superior to the $P_N$ closure, it is by far more expensive to compute. Garret et al.~\cite{GarretHauck} have demonstrated, that in a high performance implementation, more than $80$\% of the computational time of the whole solver is required for the solution of the entropy minimization problem. This motivates the development of a neural network surrogate model to accelerate the $M_N$ closure.\\
Several machine learning inspired methods have been proposed recently. 
The authors of~\cite{huang2021machine} close the moment system by learning the spatial gradient of the highest order moment. In~\cite{Han21983}, the authors pursue two strategies. First, they use a encoder-decoder network to generate generalized moments and then learn the moment closure of the system with its dynamics in mind. Second, they learn directly the correction term to the Euler equations. 
In~\cite{huang2020learning}, Galilean invariant machine learning methods for partial differential equations are developed using the conservation dissipation formalism. 
Using convolutional networks, a closure for the one dimensional Euler-Poisson system was constructed in ~\cite{bois2020neural}.
In~\cite{XIAO2021110521}, a dense neural network was used to model the deviation from the Maxwellian in the collision term of the Boltzmann equation.
The authors of~\cite{Maulik2020neural} use neural networks to reproduce physical properties of known magnetized plasma closures. 
In~\cite{ma20machine}, fully connected, dense and discrete Fourier transform networks are used to learn the Hammett-Perkins Landau fluid closure. 
Physics informed neural networks were employed  to solve forward and inverse problems via the Boltzmann-BGK formulation to model flows in continuum and rarefied regimes in~\cite{lou2020physicsinformed}, to solve the radiative transfer equation ~\cite{mishra2021physics} and the phonon Boltzmann equation in~\cite{li2021physicsinformed}. In~\cite{porteous2021datadriven}, the authors propose a data driven surrogate model of the minimal entropy closure using convex splines and empirically convex neural networks. To ensure convexity at the training data points, the authors penalize a non symmetric positive definite Hessian of the network output.  \\
The goal of this work is to construct structure-preserving deep neural network surrogate models for the entropy closure of the moment system of the Boltzmann Equation. 
The motivation is the estabilshed result from~\cite{AlldredgeFrankHauck}, that convex approximations to the entropy will preserve important mathematical properties of the moment system.
The first proposed neural network maps a given moment to its minimal mathematical entropy. In contrast to the work proposed in~\cite{porteous2021datadriven}, the neural network is input convex by design using the methods of Amos et al.~\cite{Amos2017InputCN}. By this ansatz, the learned closure automatically inherits all structural properties of the entropy closure for the moment system, due to the result of ~\cite{AlldredgeFrankHauck}, that any convex approximation to the entropy preserves said properties. The derivative of the network with respect to the moments maps to the corresponding optimal Lagrange multipliers of the entropy minimization problem. We train the neural network on the output, the Lagrange multiplier and additionally on the reconstructed moments, whereas in~\cite{porteous2021datadriven}, the authors train on the output, the reconstructed moments and the Hessian of the network.
The second approach of in this work is a monotonic neural network that maps the moments directly to the Lagrange multipliers of the entropy minimization problem. We use a penalty function to train the neural network to be monotonic and otherwise use the same loss as in the input convex approach.\\
The remainder of this paper is structured as follows.  In Section~\ref{sec_kinEq}, we give a brief introduction to the Boltzmann equation. We review the moment system with minimal entropy closure and examine its benefits and shortcomings.
In Section~\ref{sec_neuralEntropyClosure}, we present our findings for structure-preserving, neural network based surrogate models for the minimal entropy closure. To this end we describe two structure-preserving neural network architectures and their integration in an end-to-end numerical solver. We show that the intrinsic structure of the moment system is preserved. Additionally, we analyze the data-to-solution map of the minimal entropy closure and perform a dimension reduction.
In Section~\ref{sec_data}, we give a review over characterizations of the boundary of the set of feasible moments for the minimal entropy closure. Afterwards we propose an error bound for the generalization gap for the gradient of input convex neural networks trained in Sobolev norm. Then, we propose a sampling strategy to generate training data for closures in arbitrary spatial dimension and moment order, based on the analysis of the generalization gap.
Lastly, Section~\ref{sec_numResults} presents a range of numerical studies that show that the neural entropy closure is computationally more efficient than the reference solution. Furthermore, a series of synthetic tests as well as simulation tests are conducted to inspect the numerical accuracy of the neural network based entropy closures.
\section{Kinetic Theory}\label{sec_kinEq}
\subsection{Kinetic equations}
Classical kinetic theory is profoundly built upon the Boltzmann equation,
which describes the space-time evolution of the one-particle kinetic density function $f(t, \mathbf{x},\mathbf{v})$ in a many-particle system
\begin{equation}  \label{eq_boltzmann}
    \partial_t f+\mathbf{v} \cdot \nabla_{\mathbf{x}} f = Q(f).
\end{equation}
The phase space consists of time $t>0$, space $\mathbf x\in \mathbf{X}\subset\mathbb R^3$, and particle velocity  $\mathbf{v}\in\mathbf{V}\subset\mathbb R^3$.
The left-hand side of the equation describes particle transport, where the advection operator $\mathbf{v} \cdot \nabla_{\mathbf x}$ describes the movement of the particle density with velocity $\mathbf v$ in the spatial directions.
The integral operator $Q(f)$ on the right hand side of the equation models interaction of the particle with the background medium and collisions with other particles. If the particles only collide with a background material one can model this behavior with the  linear Boltzmann collision operator
\begin{equation}
    Q(f)(\mathbf{v})=\int_{\mathbf{V}} \mathcal B(\mathbf v_*, \mathbf v) \left[ f(\mathbf v_*)-f(\mathbf v)\right] d\mathbf v_*,
\end{equation}
where the collision kernel $\mathcal B(\mathbf v_*, \mathbf v)$ models the strength of collisions at different velocities. If the interactions among particles are considered, the collision operator becomes nonlinear. For example, the two-body collision results in
\begin{equation}
\label{eq_coll}
    Q(f,f)=\int_{\mathbf{V}} \int_{\mathcal S^2} \mathcal B(\cos \beta, |\mathbf{v}-\mathbf{v_*}|) \left[ f(\mathbf v')f(\mathbf v_*')-f(\mathbf v)f(\mathbf v_*)\right] d\mathbf \Omega d\mathbf v_*,
\end{equation}
where $\{\mathbf{v},\mathbf{v_*}\}$ are the pre-collision velocities of two colliding particles, and $\{\mathbf{v}',\mathbf{v_*}'\}$ are the corresponding post-collision velocities and $\mathcal S^2$ is the unit sphere. The right-hand side is a fivefold integral, where $\beta$ is the so-called  deflection angle.
In the following, we use the notation
\begin{equation}
\inner{\cdot} = \int_{\mathbf{v}}\cdot \intD \mathbf{v}
\end{equation}
to define integrals over velocity space. \\
Well-posedness of Eq.~\eqref{eq_boltzmann} requires appropriate initial and boundary conditions. 
The Boltzmann equation is a first-principles model based on direct modeling.
It possesses some key structural properties, which are intricately related to the physical processes and its mathematical existence and uniqueness theory. We briefly review some of these properties, where we follow~\cite{AlldredgeFrankHauck,Levermore1996MomentCH}.
First, the time evolution of the solution is invariant in range, i.e. if $f(0,\mathbf{x},\mathbf{v})\in B\subset[0,\infty)$, then $f(t,\mathbf{x},\mathbf{v})\in B\subset[0,\infty)$ for all $t>0$. Particularly this implies non-negativity of $f$.
Second, if $\phi$ is a collision invariant fulfilling
\begin{align}\label{eq_collision_invariant}
        \inner{\phi Q(g)} = 0,\quad \forall g\in\mathrm{Dom}(Q),
\end{align}
the equation
\begin{align}
        \partial_t\inner{\phi f} + \nabla_{\mathbf x}\cdot\inner{\mathbf v\phi f} = 0
\end{align}
is a local conservation law. Third, for each fixed direction $\mathbf{v}$, the advection operator, i.e. the left-hand side term of Eq.~\eqref{eq_boltzmann}, is hyperbolic in space and time. Forth, let $D\subset\mathbb{R}$. There is a twice continuously differentiable, strictly convex function $\eta: D\rightarrow\mathbb{R}$, which is called kinetic entropy density.  It has the property
\begin{align}
        \inner{\eta'(g)Q(g)} \leq 0, \quad \forall g\in\mathrm{Dom}(Q) \ \mathrm{s.t.} \ \mathrm{Im}(g)\subset D.
\end{align}
Applied to Eq.~\eqref{eq_boltzmann}, we get the local entropy dissipation law
\begin{align}
    \partial_t\inner{\eta(f)} + \nabla_{\mathbf x}\cdot\inner{\mathbf v\eta(f)} \leq 0.
\end{align}
Usually we set $D=B$. Lastly, the solution $f$ fulfills the H-theorem, i.e. equilibria are characterized by any of the three equivalent statements,
\begin{align}
     \inner{\eta'(g)Q(g)} &= 0,\\ \qquad  Q(g) &= 0,\\ \eta'(g)&\in E,
\end{align}
where $E$ denotes the linear span of all collision invariants.\\
\subsection{Moment methods for kinetic equations}\label{sec_momMethods}
The Boltzmann equation is an integro-differential equation model defined on a seven-dimensional phase space. With the nonlinear five-fold integral, it is  challenging to solve accurately and efficiently. 
The well-known moment method encode the velocity dependence of the Boltzmann equation by multiplication with a vector of velocity dependent basis functions $m(\mathbf{v})\in\mathbb{R}^{\tilde{N}}$, that consists of polynomials up to order $N$ and subsequent integration over $\mathbf{V}$.
In one spatial dimension, usually we have $\tilde{N}=N+1$, whereas in higher spatial dimensions $\tilde{N}$ equals the number of basis functions up to order $N$. 
The solution of the resulting moment equation is the moment vector $u\in\mathbb{R}^{\tilde{N}}$ and is calculated by

\begin{align}\label{eq_momentDef}
 u(t,\mathbf x)=\inner{m(\mathbf v) f(t,\mathbf{x},\mathbf v)}.
 \end{align} 
Common choices for the basis functions are monomials or spherical harmonics, depending on the application. Typically, they include the collision invariants defined in Eq.~\eqref{eq_collision_invariant}.
The moment vector satisfies the system of transport equations
\begin{align}\label{eq_momentRTa}
\partial_t u(t,\mathbf{x}) + \nabla_\mathbf{x}\cdot\inner{\mathbf{v} m(\mathbf{v})f}= \inner{m(\mathbf{v})Q(f)},
\end{align}
which is called moment system.
By construction, the advection operator depends on $f$ and thus, the moment system is unclosed. Moment methods aim to find a meaningful closure for this system. 
Since the kinetic equation dissipates entropy and fulfills a local entropy dissipation law, one can close the system by choosing the reconstructed kinetic density $f_u$ out of the set of all possible functions $F_m=\menge{g\in \text{Dom}(Q) : \text{Range}(g)\subset D \text{ and} \inner{mg}<\infty}$, that fulfill $u(t,\mathbf{x})=\inner{mg}$ as the one with minimal entropy $h$.  The minimal entropy closure can be formulated as a constrained optimization problem for a given vector of moments $u$.
\begin{align}\label{eq_entropyOCP} 
\min_{g\in F_m} \inner{\eta(g)}\quad  \text{ s.t. }  u=\inner{m g}
\end{align}
The minimal value of the objective function is denoted by $h(u)=\inner{\eta(f_u)}$ and $f_u$ is the minimizer of Eq.~\eqref{eq_entropyOCP}, which we use to close the moment system
\begin{align}\label{eq_momentRT}
    \partial_t u(t,\mathbf{x}) + \nabla_\mathbf{x}\cdot\inner{\mathbf{v} m(\mathbf{v})f_u}= \inner{m(\mathbf{v})Q(f_u)}.
\end{align}
The set of all moments corresponding to a kinetic density $f$ with $\text{Range}(f)\subset D$ is called the realizable set 
\begin{align}
    \mathcal{R}=\menge{u: \inner{mg}=u,\, g\in F_m}.
\end{align}
$\mathcal{R}$ is the set of all moments correpsonding to kinetic densities $f$ that fulfill the invariant range condition of the kinetic equation. 
There does not always exists a solution for the minimal entropy problem~\cite{Hauck2008ConvexDA}. However, if a solution exists for $u\in\mathcal{R}$, it is unique and of the form 
\begin{align}\label{eq_entropyRecosntruction}
    {f}_u = \eta'_*(\alpha(u)\cdot m).
\end{align}
where the Lagrange multiplier $ \alpha_u:\mathbb{R}^{\tilde{N}}\rightarrow\mathbb{R}^{\tilde{N}} $ maps $u$ to the solution of the convex dual problem
\begin{align}\label{eq_entropyDualOCP}
    \alpha_u =  \underset{\alpha\in\mathbb{R}^{\tilde{N}}}{\text{argmax}} \menge{ \alpha\cdot u - \inner{\eta_*(\alpha\cdot m)}}
\end{align}
and $\eta_*$ is the Legendre dual of $\eta$.
By the strong duality of the minimal entropy problem, the maximum of \eqref{eq_entropyDualOCP} equals the minimum of \eqref{eq_entropyOCP} and we can write at the optimal point $(u,\alpha_u)$
\begin{align}\label{eq_entropyFunctionalH}
h(u) =  \alpha_u\cdot u - \inner{\eta_*(  \alpha_u\cdot m)}.
\end{align}
The twice differentiable and convex function $h(u)$ serves as the entropy of the moment system~\cite{AlldredgeFrankHauck}. We can recover the moment $u$ by using first order optimality conditions
\begin{align}
    \frac{\intD}{\intD \alpha_u} h = u- \inner{m \eta'_*(\alpha_u\cdot m)} = 0
\end{align}
which yields also Eq.~\eqref{eq_entropyRecosntruction}, since $\inner{m f_u} = u=\inner{m \eta'_*(\alpha_u\cdot m)}$. This yields the inverse of the solution map $\alpha_u$ of the dual optimization problem.
Furthermore,  the derivative of $h$ recovers the optimal Lagrange multipliers of Eq.~\eqref{eq_entropyDualOCP},
\begin{align}\label{eq_derivH}
\frac{\intD}{\intD u}h =  \alpha_u.
\end{align}
This minimal entropy closure  also conserves the above listed structural properties of the Boltzmann equation . We present the above properties for the moment system for the sake of completeness, where we follow~\cite{AlldredgeFrankHauck,Levermore}. First, the invariant range property of the solution $f$ translates to the set of realizable moments $\mathcal{R}$. One demands that $u(t,\mathbf{x})\in\mathcal{R}$ for all $t>0$. Second, if a moment basis function $m_i(\mathbf{v})$ is a collision invariant, then 
 \begin{align}\label{eq_momentRT_lhS}
        \partial_t u(t, \mathbf x) + \nabla_{\mathbf{x}}\cdot\inner{\mathbf{v} m f_u}=0,
\end{align}
is a local conservation law. Third, one can write Eq.~\eqref{eq_momentRT} as a symmetric hyperbolic conservation law in $\alpha_u$. Forth,  for  $u\in\mathcal{R}$, $h(u)$ and $j(u)=\inner{\mathbf{v}\eta(f_u)}$ is a suitable entropy and entropy-flux pair compatible with the advection operator $\inner{\mathbf{v} mf_u}$ and yield a semi-discrete version of the entropy dissipation law. 
\begin{align}
    \partial_th(u) +\nabla_{\mathbf{x}} j(u)=h'(u)\cdot\inner{mQ(f_u(\alpha_u)}\leq 0
\end{align}
Note that convexity of $h(u)$ is crucial for the entropy dissipation property. Lastly, the moment system fulfills the H-theorem, which states equality of the following statements
\begin{align}
    \alpha_u\cdot\inner{mQ(f_u)} = 0,\\
    \inner{mQ(f_u)}=0, \\
    \alpha_u\cdot m\in E.
\end{align}
A numerical method to solve the moment system therefore consists of an iterative discretization scheme for the moment system~\eqref{eq_momentRT} and a   Newton optimizer for the dual minimal entropy optimization problem in Eq.~\eqref{eq_entropyDualOCP}. The former scheme can be a finite volume or discontinuous Garlerkin scheme, for example~\cite{KRISTOPHERGARRETT2015573}. 
The drawback of the method is the high computational cost associated with the Newton solver. The optimization problem in Eq.~\eqref{eq_entropyDualOCP} needs to be solved in each grid cell at every time step of the kinetic solver. The computational effort to solve the minimal entropy optimization problem grows over proportionately with the order $N$ of the moment basis $m$. Using three basis functions, the optimizer requires $80$\% of the computation time and $87$\% when using seven basis functions, as Garrett et al. have demonstrated in a computational study~\cite{KRISTOPHERGARRETT2015573}. 
Furthermore, the optimization problem is ill-conditioned, if the moments $u$ are near the boundary of the realizable set $\mathcal{R}$~\cite{AlldredgeHauckTits}. At the boundary $\partial\mathcal{R}$, the Hessian of the objective function becomes singular and the kinetic density $f_u$ is a sum of delta functions~\cite{Curto_recursiveness}.
\section{Structure-preserving entropy closures using neural networks}\label{sec_neuralEntropyClosure}
The following section tackles the challenge of solving the minimal entropy closure computationally efficiently while preserving the key structural properties of the Boltzmann equation. 
We propose two neural network architectures, which map a given moment vector to the solution of the minimal entropy problem, replacing the costly Newton solver that is used in traditional numerical methods. Whereas a Newton solver requires the inversion of a near singular Hessian matrix multiple times, the usage of a neural network needs a comparatively small amount of fast tensor operations to compute the closure. \\
A neural network $\mathcal{N}_\theta: \mathbb{R}^n\mapsto\mathbb{R}^m$ is a parameterized mapping from an input $x$ to the network prediction $y=\mathcal{N}_\theta(x)$. Typically a network is a concatenation of layers, where each layer is a nonlinear parameterized function of its input values. The precise structure of the network $\mathcal{N}_\theta$ depends on basic architectural design choices as well as many hyperparameters.
A simple multi-layer neural network $\mathcal{N}_\theta$ is a concatenation of layers $z_k\in\mathbb{R}^{n_k}$ consisting of non-linear (activation) functions $f_k$ applied to weighted sums of the previous layer's output $z_{k-1}\in\mathbb{R}^{n_{k-1}}$. An $M$ layer network can be described in tensor formulation as follows.
\begin{align}\label{eq_nnKons}
z_k &= f_k(W_kz_{k-1} +b_k), \qquad	k = 1,\dots,M \\
x &= z_0,  \\
\mathcal{N}_\theta(x) &= z_M
\end{align}
where $W_k$ is the weight matrix of layer $k$ and $b_k$ the corresponding bias vector. In the following, we denote the set of all trainable parameters of the network, i.e. weights and biases by $\theta$.
Usually, one chooses a set of training data points  $X_T=\menge{(x_i,y_i)}_{i\in T}$ with index set $T$ and evaluates the networks output using a loss function, for example the mean squared error between prediction and data
\begin{align}\label{eq_loss}
L_T(x,y;\theta) = \frac{1}{\vert T \vert}\sum_{i\in T} \norm{y_i -\mathcal{N}_\theta(x_i)}^2_2.
\end{align} 
Then one can set up the process of finding suitable weights, called training of the network, as an optimization problem
\begin{align}
\min_\theta \,  &L_T(x,y;\theta)
\end{align}
The optimization is often carried out with gradient-based algorithms, such as stochastic gradient descent~\cite{robbins1951} or related methods as ADAM~\cite{kingma2017adam}, which we use in this work.
\subsection{Data structure and normalization}\label{subsec_Data}
The structure of the underlying data is crucial for the construction of meaningful machine learning models. 
In the following we consider the primal and dual  minimal entropy closure optimization problem, review the characterization of the realizable set as well as a dimension reduction and finally describe helpful relations between the moment $u$, Lagrange multiplier $\alpha_u$, the entropy functional $h$ and the corresponding variables of reduced dimensionality.\\
The minimal entropy optimization problem in Eq.~\eqref{eq_entropyDualOCP} and the set of realizable moments $\mathcal{R}$ is  studied in detail by by~\cite{AlldredgeFrankHauck,Levermore, Curto_recursiveness,Hauck2008ConvexDA,Junk,Junk1999,JunkUnterreiter2001,Pavan2011GeneralEA}. The characterization of $\mathcal{R}$ uses the fact that the realizable set is uniquely defined by its boundaries~\cite{Kren1977TheMM}. First we remark that the realizable set $\mathcal{R}\subset\mathbb{R}^{\tilde{N}}$ of the entropy closure problem of order $N$ is generally an unbounded convex cone. To see this consider the moment of order zero, $u_0=\inner{f}$ for any kinetic density function $f\in F_m$, which can obtain values in $(0,\infty)$. For a fixed moment of order zero $u_0$, the subset of the corresponding realizable moments of higher order is bounded and convex~\cite{Kershaw1976FluxLN,Monreal_210538}. 
Consequently, we consider the normalized realizable set ${\mathcal{R}}$ and the reduced normalized realizable set ${\mathcal{R}}^r$
\begin{align}
   \mathcal{R}^n &=  \menge{u\in\mathcal{R}: u_0 =1}\subset\mathbb{R}^{\tilde{N}},\\
  \mathcal{R}^r &=  \menge{u^r\in\mathbb{R}^{N}:[1,{u^r}^T]^T\in{\mathcal{R}^n}}\subset\mathbb{R}^{\tilde{N}-1},
\end{align}
which are both bounded and convex~\cite{Kershaw1976FluxLN,Monreal_210538}. 
This approach is also used in computational studies of numerical solvers of the minimal entropy closure problem ~\cite{AlldredgeHauckTits}. We denote normalized moments and reduced normalized moments as
\begin{align}\label{eq_normalizedMoments}
    {u}^n &= \frac{u}{u_0} = [1,{u}_1^r,\dots,{u}_{\tilde{N}}^r]^T\in\mathbb{R}^{\tilde{N}},\\
    {u}^r &= [{u}_1^r,\dots,{u}_{\tilde{N}}^r]^T\in\mathbb{R}^{\tilde{N}-1}.
\end{align}
We establish some relations between the Lagrange multiplier $\alpha_u$, the Lagrange multiplier of the normalized moment ${\alpha}^n_u$ and of the reduced normalized moment $\alpha_u^r$,
\begin{align}\label{eq_alpha_n}
    {\alpha}_u^n &= \left[\alpha_{u,0}^r,\alpha_{u,1}^r,\dots,\alpha_{u,{\tilde{N}}}^r\right]^T\in\mathbb{R}^{\tilde{N}},\\
    \alpha_u^r &= \left[\alpha_{u,1}^r,\dots,\alpha_{u,{\tilde{N}}}^r\right]^T\in\mathbb{R}^{\tilde{N}-1}.
\end{align}
We define the reduced moment basis, which contains all moments of order $n=1,\dots,N$, as
\begin{align}
    m^r(\mathbf v) = [m_1(\mathbf v),\dots,m_{\tilde{N}}(\mathbf v)]^T,
\end{align}
since $m_0(v)=1$ is the basis function of order $0$.
For the computations we choose the Maxwell-Boltzmann entropy and a monomial basis, however the relations can be analogously computed for other choices of entropy function and moment basis. 
The Maxwell-Boltzmann entropy has the following definition, Legendre dual and derivative.
\begin{align}
    \eta(z) &= z\ln(z)-z, \qquad z\in D=\mathbb{R}_+ \\
    \eta'(z) &= \ln(z), \qquad\qquad\  z\in D=\mathbb{R}_+ \\
    \eta_*(y) &= \exp(y),  \qquad\quad\ \ y\in\mathbb{R}\\
    \eta_*'(y) &= \exp(y),  \qquad\quad\ \ y\in\mathbb{R}
\end{align}
In one spatial dimension, we have $\mathbf{v}=v\in\mathbb{R}$ and a monomial basis is given by $m(v)=[1,v, v^2,\dots]$.
Assuming knowledge about the Lagrange multiplier  $\alpha_u^r$ of the reduced normalized moment  we can derive an expression for $\alpha_0^r$ using the definition of the moment of order zero, 
\begin{align}
1={u}^n_0 = \inner{m_0 \eta_*'({\alpha}_u^n\cdot m)} = \inner{\exp({\alpha}_u^n\cdot m)} =\inner{\exp(\alpha_u^r\cdot m^r)\exp(\alpha_{u,0}^r\cdot m_0)},
\end{align}
which we can transform to 
\begin{align}\label{eq_alpha0_recons}
    \alpha_{u,0}^r = -\ln(\inner{\exp(\alpha_u^r\cdot m^r)})
\end{align}
using $m_0(\mathbf v)=1$.
This yields the complete Lagrange multiplier $\alpha_u^n$ of the complete normalized moment vector $u^n$. Finally, we use a scaling relation~\cite{AlldredgeHauckTits} to recover the Lagrange multiplier of the original moment vector $u$, by considering again the definition of the normalized moment 
\begin{align}\label{eq_recons_u}
     {u}^n = \inner{m\exp({\alpha}_u^n\cdot m)} =\inner{m\exp(\alpha_u^r\cdot m^r)\exp(\alpha_{u,0}^r)} 
 \end{align}
 and multiply both sides with $u_0>0$
\begin{align}
     u  = \inner{\exp(\alpha_u^r\cdot m^r)\exp(\alpha_{u,0}^r)u_0} =  \inner{\exp(\alpha_u^r\cdot m^r)\exp(\alpha_{u,0}^r+\ln(u_0))},
\end{align}
which yields the original Lagrange multiplier $\alpha_u$
\begin{align}\label{eq_scaled_alpha}
    \alpha_u = [\alpha_{u,0}^r + \ln(u_0),\alpha^r_{u,1},\dots,\alpha^r_{u,N}]^T.
\end{align}
This also implies that $\alpha_{u,i}=\alpha_{u,i}^r$ for all $i=1,\dots,\tilde{N}$.
For completeness, the entropy of the normalized moments $h^n =h(u^n)$ and the entropy $h(u)$ of the original moments have the relation
\begin{align}
    h(u) &= \alpha\cdot u - \inner{\exp(\alpha\cdot m)} \\
   &= u_0\left(\alpha^n\cdot u^n + \ln(u_0)\right)- \inner{\exp(\alpha^n\cdot m +\ln(u_0))} \\
   &= u_0\left(\alpha^n\cdot u^n + \ln(u_0)\right)- \inner{\exp(\alpha^n\cdot m }u_0) \\
    &= u_0h(u^n) + u_0\ln(u_0),
\end{align}
where we use Eq.~\eqref{eq_entropyFunctionalH} and \eqref{eq_scaled_alpha}. We denote the entropy of a normalized moment vector $h^n=h(u^n)$.
These scaling relations enable a dimension reduction for faster neural network training. Furthermore, we use these relations to integrate the neural network models, which are trained on ${\mathcal{R}}^r$, into the kinetic solver that operates on $\mathcal{R}$.
\subsection{Neural network approximations to the entropy functional}
In the following we present two ideas for neural network entropy closures, which are based on the results of~\cite{AlldredgeFrankHauck}, where the authors propose a regularized entropy closure with a corresponding entropy $h_\gamma$ and a regularization parameter $\gamma$,
\begin{align}
    h_\gamma = \inf_{g\in F_m} \inner{\eta(g)} + \frac{1}{2\gamma}\norm{\inner{mg}-u}^2.
\end{align}
In the limit, we have $h_\gamma\rightarrow h$ as $\gamma\rightarrow\infty$. The regularized entropy $h_\gamma$, which is twice differentiable and convex, acts as an entropy for the regularized moment system.
Furthermore, the authors have shown, that this approximation to the original entropy $h$ satisfies the conditions for the mathematical properties of the moment system presented in Section~\ref{sec_momMethods}, most importantly hyperbolicity, entropy dissipation and the H-Theorem.
In a similar manner, we present a twice differentiable, convex approximations $h^n_\theta$ to the moment to entropy map $h^n=h(u^n)$ of the normalized moment system. A neural network approximation, which we denote by $\mathcal{N}_\theta$, constructed with these properties in mind preserves the structural properties of the moment system. 
Assuming the neural network is trained, i.e. it approximates the entropy $h^n$ sufficiently well, we have the following relations,
\begin{align}\label{eq_hn}
    h^n_\theta =& \mathcal{N}_\theta(u^r) \approx h^n,\\
    \alpha^r_\theta  =& \frac{\intD}{\intD u^r}\mathcal{N}_\theta(u^r) \approx \frac{\intD}{\intD u^r}h^n = \alpha_u^r,\\
    \alpha^r_{\theta,0} =& -\ln(\inner{\exp(\alpha_\theta^r\cdot m^r)})\approx \alpha^r_{u,0}\\
    f_\theta =&\eta'_*(  \alpha^n_\theta \cdot m) \approx  \eta'_*(\alpha^n_u\cdot m) = f_u, \\
    u^n_\theta =& \inner{m\eta'_*(   \alpha^n_\theta \cdot m)} \approx  \inner{m\eta'_*(\alpha_u^n\cdot m)} = u^n, \label{eq_u_theta}
\end{align}
by using Eq.~\eqref{eq_entropyRecosntruction}, Eq.~\eqref{eq_derivH}, Eq.~\eqref{eq_alpha0_recons} and the definition of the moment vector.  \\
The idea of the second  neural network closure for the dual minimal entropy problem in Eq.~\eqref{eq_entropyDualOCP}, makes use of the following characterization of multivariate convex functions via montonicity of their gradients~\cite{berkovitz2003convexity}.
Let  $U\subset\mathbb{R}^N$ be a convex set. A function $G:U\rightarrow \mathbb{R}^d$ is \textit{monotonic}, if and only if $(G(x)-G(y))\cdot (x-y)\geq 0$ for all $x,y\in U$.
Let $g:U\rightarrow\mathbb{R}$ differentiable. Then $g$ is convex, if and only if  $\nabla g:U\rightarrow\mathbb{R}^d$ is monotonic. As a consequence, if the mapping $u^n\rightarrow\alpha_u^n$ is monotonic for all $u^n\in\mathcal{R}^n$, then the corresponding entropy functional is $h^n$ is convex in $u^n$. A trained monotonic neural network, that approximates the moment to Lagrange multiplier map, fulfills the following relations, 
\begin{align}
    \alpha^r_\theta =& \mathcal{N}_{\theta}{(u^r)} \approx  \alpha_u^r,\\
     \alpha^r_{\theta,0} =& -\ln(\inner{\exp(\alpha_\theta^r\cdot m^r)})\approx \alpha^r_{u,0}\\
    h^n_\theta=& \alpha^n_\theta\cdot u^n -  \inner{\eta_*(\alpha^n_\theta \cdot m)}\approx h^n, \\
    f_\theta =&\eta'_*(  \alpha^n_\theta \cdot m) \approx  \eta'_*(\alpha^n_u\cdot m) = f_u, \\
    u^n_\theta =& \inner{m\eta'_*(   \alpha^n_\theta \cdot m)} \approx  \inner{m\eta'_*(\alpha_u^n\cdot m)} = u^n.
\end{align}
We briefly examine the structural properties of a convex neural network based entropy closure. 
The invariant range property of $f_\theta$ depends solely on the range of $\eta_*'$. 
By definition of the Maxwell-Boltzmann entropy, the neural network based entropy closure is of invariant range, since $f_\theta(\mathbf v)=\exp(\alpha^n_\theta\cdot m(\mathbf v))>0$.
Interchanging the entropy functional by a neural network does not affect the conservation property of the moment system.
Consider the hyperbolicity requirement. In order to define the Legendre dual of $h$, it must be convex. Note, that $h^n$ is convex, if and only if $h$ is convex. In the proof of the hyperbolicity property, which is conducted in \cite{Levermore1996MomentCH} for $\alpha_u$ and $u$ as the system variable, $h''$, respectively $h_*''$, must be symmetric positive definite. As a consequence, $h^n$ and therefore the neural network $\mathcal{N}_\theta(u^n)$ must be strictly convex in $u^n$. Strict convexity of the entropy functional $h$ is the crucial requirement for the related properties entropy dissipation and the H-theorem~\cite{Levermore1996MomentCH} as well.

\subsubsection{Input convex neural network approximation of the entropy functional}
Convex neural networks have been inspected in \cite{Amos2017InputCN}, where the authors propose several deep neural networks that are strictly convex with respect to the input variables by design. The design is led by the following principles~\cite{boyd_vandenberghe_2004} that yield sufficient conditions to build a convex function. First, a positive sum of convex functions is convex. Second, let $f:\mathbb{R}^n\rightarrow\mathbb{R}$ be the concatenation of the functions $h:\mathbb{R}^k\rightarrow\mathbb{R}$ and $g:\mathbb{R}^n\rightarrow\mathbb{R}^k$. Then $f(x) = h(g(x))$ is convex, if $h$ is convex, $h$ is non-decreasing in each argument and all $g_{i=1,\dots,k}$ are convex.
Applying these conditions to the definition of a layer of a neural network, Eq.~\eqref{eq_nnKons}, yields that all entries of the weight matrix $W_k$ must be  positive in all layers except the first. Furthermore, the activation function of each layer must be convex.
The authors of~\cite{chen2019optimal} have shown, that such a network architecture with ReLU activations is able dense in the space of convex functions. They first show that an input convex network can approximate any maximum of afine functions, which itself can approximate any convex function in the limit of infinite layers.
However, in practice it turns out that very deep networks with positive weights have difficulties to train. The authors of~\cite{Amos2017InputCN} therefore modify the definition of a hidden layer in Eq.~\eqref{eq_nnKons} to
\begin{align}\label{eq_ICnnKons}
z_{k} &= \sigma(W_k^z z_{k-1} + W_k^x x + b_k^z), \qquad k=2,\dots, M, \\
z_{k} &= \sigma(W_k^x u + b_k^z), \qquad\qquad\qquad\, k=1, \label{eq_ICNN_layer1}
\end{align}
where $W_k^z$ must be non-negative, and $W_k^x$ may attain arbitrary values. 
We choose the strictly convex softplus function 
\begin{align}
    \sigma:\mathbb{R}\rightarrow\mathbb{R}_+,\quad \sigma(y) =  \ln(\exp(y)+1)
\end{align}
as the layer activation function for $k=1,\dots,M-1$
and a linear activation for the last layer, since we are dealing with a regression task. This leads to an at least twice continuously differentiable neural network.
Non-negativity can be achieved by applying a projection onto $\mathbb{R}_+$ to the elements of $W_k^z$ after a weight update. 
Next, we modify the first layer in Eq.~\eqref{eq_ICNN_layer1} to include two prepossessing layers.
We first zero center the input data w.r.t the mean vector of the training data set $\mu_u$, then we decorrelate the channels of the input vector w.r.t to the covariance matrix of the training data.
\begin{align}
    z_1^* &= u - \mu_u,\label{eq_meanshift}\\
    z_1^{**} &= \Lambda_u^T z_1*,\label{eq_decorrelation}\\
    z_{1} &= \sigma(W_k^x z_1^{**} + b_k^z),
\end{align}
where   $\Lambda_u$ is the eigenvector matrix of the covariance matrix of the training data set. 
The first two operations and the weight multiplication of the dense layer are a concatenation of linear operations and thus do not destroy convexity as well. Centering and decorrelation of the input data accelerate training, since the gradient of the first layer directly scales with the mean of the input data. Thus a nonzero mean may cause zig-zagging of the gradient vector ~\cite{LeCun2012}.
Lastly, we rescale and center the entropy function values of the training data. Note, that in the following we switch to notation corresponding to the entropy closure problem. We scale the entropy function values $h^n$ to the interval $[0,1]$ via
\begin{align}\label{eq_rescaleH}
    h^{n,*}= \frac{h^n-\min_{l\in T}h^n_l}{\max_{l\in T}h^n_l-\min_{l\in T}h^n_l},
\end{align}
which is equivalent to a shift and scale layer after the output layer of the neural network. Thus the gradient of the scaled neural network output $\alpha^*_\theta$ needs to be re-scaled to recover the original gradient,
\begin{align}\label{eq_rescaleAlpha}
    \alpha_\theta = \alpha^*_\theta\left(\max_{l\in T}h^n_l-\min_{l\in T}h^n_l\right)
\end{align}
Both operations are linear with a positive multiplicator, thus do not break convexity.
\\
\begin{figure}
  \centering
     \includegraphics[width=\textwidth]{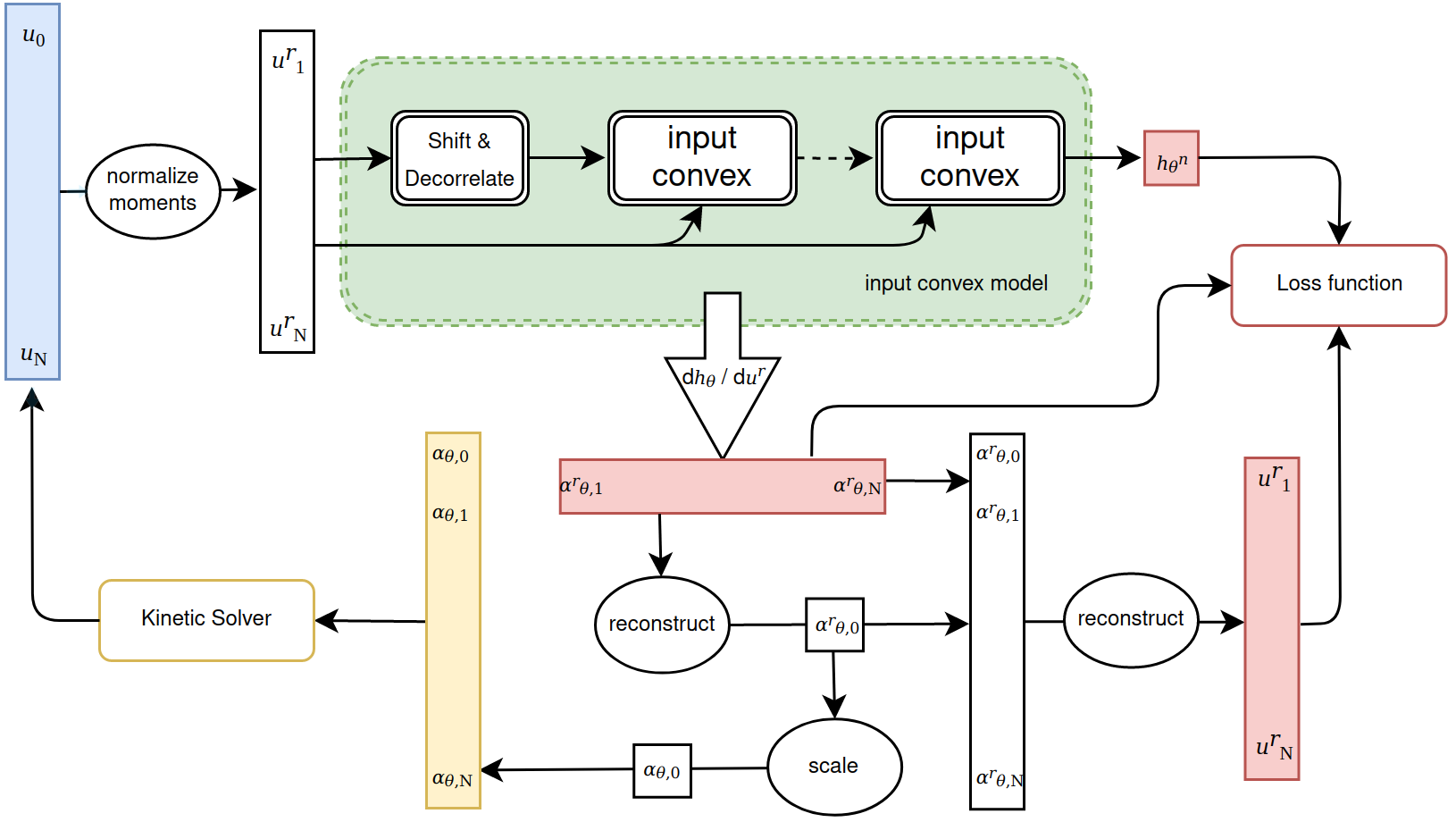}
  \caption{Input convex neural network closure. Model input vectors are depicted in blue. Red vectors are outputs, on which the model is trained on. When the trained model is employed, the yellow solution vector is used to construct the flux for the kinetic solver .}
  \label{fig_ICNN}
\end{figure}
We briefly describe the workflow of the neural network in training and execution time, which is illustrated in Fig.~\ref{fig_ICNN}.
For training a given input convex neural network architecture, we use a training data-set $X_T=\menge{u^r_i,\alpha_{u,i}^r,h^n_i}_{i\in T}$, where we first scale $h^n$ according to Eq.~\eqref{eq_rescaleH} and compute mean and covariance of $\menge{u_i^r}_{i\in T}$ for the shift and decorrelation layer. 
After a forward pass through the modified input convex neural network, we obtain $h_\theta^{n,*}$ and by automatic differentiation through the network w.r.t. $u^r$, we obtain $\alpha_{\theta}^{r,*}$, which we scale using Eq.~\eqref{eq_rescaleAlpha} to get $\alpha_\theta^r$. Using Eq.~\eqref{eq_alpha0_recons} we reconstruct $\alpha_{\theta,0}^r$ and therefore  $\alpha_{\theta}^n$ with Eq.~\eqref{eq_alpha_n}. The  normalized moments $u_\theta^n$ and the reduced normalized moments $u_\theta^r$ are computed using Eq.~\eqref{eq_recons_u}. The training loss function is evaluated on the mean squared error of $u_\theta^r$, $h_\theta^{n,*}$ and $\alpha_{\theta}^{r,*}$,
\begin{align}\label{eq_loss_icnn}
    L(u^r,\alpha_u^{r,*},h^{n,*};\theta) =  \frac{1}{\vert T\vert}\sum_{i\in T} \norm{h^{n,*}_i -h_{\theta,i}^{n,*}}^2_2 + \lambda\norm{\alpha_{u,i}^{r,*}- \alpha_{\theta,i}^{r,*}}^2_2 +\norm{u_i^r -u_{\theta,i}^r}^2_2.
\end{align}
The parameter $\lambda$ is used to scale the loss in $\alpha_u^r$ to the same range as the loss in $h^n$ and $u^r$.
Training the neural network on the Lagrange multiplier $\alpha_u^{r,*}$ corresponds to fitting the neural network approximation to the entropy functional $h^{n,*}$ in Sobolev norm. The authors of \cite{SobolevTraining} found that neural network models trained on the additional knowledge of derivative information archive lower approximation errors and generalize better. 
\\
When integrating the neural network in the kinetic solver, we gather the moments of all grid cells of the spatial domain from the current iteration of the used finite volume scheme. The moments are first normalized in the sense of Eq.~\eqref{eq_normalizedMoments},then  the predicted $\alpha_\theta^r$ are obtained in the same manner as in the training workflow. Afterwards, we use Eq.~\eqref{eq_alpha0_recons} and ~\eqref{eq_scaled_alpha} to obtain $\alpha_\theta$ corresponding to the non-normalized moments $u$. Finally, Eq.~\eqref{eq_entropyRecosntruction} yields the closure of the moment system, from which the numerical flux for the finite volume scheme can be computed.

\subsubsection{Monotone neural network approximation of the Lagrange multiplier}
No particular design choices about the neural network are made to enforce monotonicity, since the characterization of monotonic functions is not constructive. To the best of our knowledge, there exists no constructive definition of multidimensional monotonic function.
Instead we construct an additional loss function to make the network monotonic during training time. This is an important difference to the first approach, where the network is convex even in the untrained stage and on unseen data points.
\begin{definition}[Monotonicity Loss]
Consider a neural network $\mathcal{N}_\theta:x\mapsto y$. Let $X_T$ the training data set. The monotonicity loss is defined as 
\begin{align}
  L_\text{mono}\left(x,\theta\right)  =  \frac{1}{\abs{T}^2}\sum_{i\in T} \sum_{j\in T} \text{ReLU}\left(-\left(\mathcal{N}_\theta(x_i) - \mathcal{N}_\theta(x_j)\right)\cdot\left(x_i-x_j\right)\right).
\end{align}
The ReLU function is defined as usual,
\begin{align}
    \text{ReLU}(x)= 
    \begin{cases}
      x & \text{if $x>0$}\\
      0 & \text{if $x\leq0$}.
    \end{cases} 
\end{align}
\end{definition}
The monotonicity loss checks pairwise the monotonicity property for all datapoints of thetraining data set. If the dot product is negative, the property is violated and the value of the loss is increased by the current dot product. This is a linear penalty function and can be potentiated by a concatenation with a monomial function.
Note, that we only validate the monotonicity of the networkpointwise in a subset of the training data.
 As a consequence, the mathematical structures of the resulting moment closure is only preserved in an empirical sense, i.e. if the realizable set and more importantly, the set of Lagrange multipliers is sampled densely.
The resulting neural network architecture is illustrated in Fig.~\ref{fig_DirectNet}. 
Normalization and the meanshift and decorrelation layers in Eq.~\eqref{eq_meanshift} and Eq.~\eqref{eq_decorrelation} is implemented analogously to the input convex neural network. The core network architecture consists of a number of $M$ ResNet blocks. The ResNet architecture has been successfully employed in multiple neural network designs for multiple applications and was first presented in~\cite{HeZRS15}. The ResNet blocks used in this work read as
\begin{subequations}\label{eq_mono_layer}
\begin{align}
  z_{k}^1 &= \text{BN}(z_{k-1}), \\
  z_{k}^2 &= \sigma( z_{k}^1 ), \\
  z_{k}^3 &= W_k^* z_k^2 + b_k^*, \\
  z_{k}^4 &= \text{BN}(z_{k}^3), \\
  z_{k}^5 &= \sigma( z_{k}^1 ), \\
  z_{k}^6 &= W_k^{**} z_k^2 + b_k^{**}, \\
  z_{k} &= z_k^6+z_{k-1},\label{eq_skipConnection}
  \end{align}
\end{subequations}
with the idea, that the skip connection in Eq.~\eqref{eq_skipConnection} mitigates the gradient vanishing problem for deep neural networks. 
Furthermore, we include a batch normalization (BN) layer  in front of each activation, which reduces the problem internal covariance shift~\cite{IoffeS15}, that many deep neural network structures suffer from, and which slows down the training. Batch normalization is performed by applying pointwise the following two transformation to the previous layers output $z_k$,
\begin{align}
    z_k^* &= \frac{z_{k-1}-\mathbb{E}[z_{k-1}]}{\sqrt{\text{Var}[z_{k-1}]+\epsilon}},\\
    z_k &= \theta_0  z_k^* + \theta_1,
\end{align}
where $\theta_0$ and $\theta_1$ are trainable weights and $\mathbb{E}[z_{k-1}]$ and $\text{Var}[z_{k-1}]$ denote the expectation value and the variance of the current batch of training data, respectively.\\
One transforms the network output $\alpha_\theta^r$ to the values of interest $\alpha_\theta$ and $u_\theta^r$ analogously to the input convex network design. The entropy functional $h_\theta^r$ directly computed from $u_\theta^n$ and $\alpha_\theta^n$ using Eq.~\eqref{eq_entropyFunctionalH}. Training data rescaling and integration in the kinetic solver follow the ideas of the input convex network design.
The batchwise monotonicity loss is calculated using $u^r$ and $\alpha_\theta^r$, the gradient of the convex entropy functional $h^r$. 
The loss function for the network training becomes
\begin{align}\label{eq_nwLoss}
    L(u^r,\alpha_u^{r,*},h^{n,*};\theta) =  \frac{1}{\vert B\vert}\sum_{i\in B}\left( \norm{h^{n,*}_i -h_{\theta,i}^{n,*}}^2_2 + \norm{\alpha_{u,i}^{r,*}- \alpha_{\theta,i}^{r,*}}^2_2 +\norm{u_i^r -u_{\theta,i}^r}^2_2\right) + L_\text{mono}\left(u^r,\theta\right).
\end{align}
\begin{figure}
  \centering
     \includegraphics[width=\textwidth]{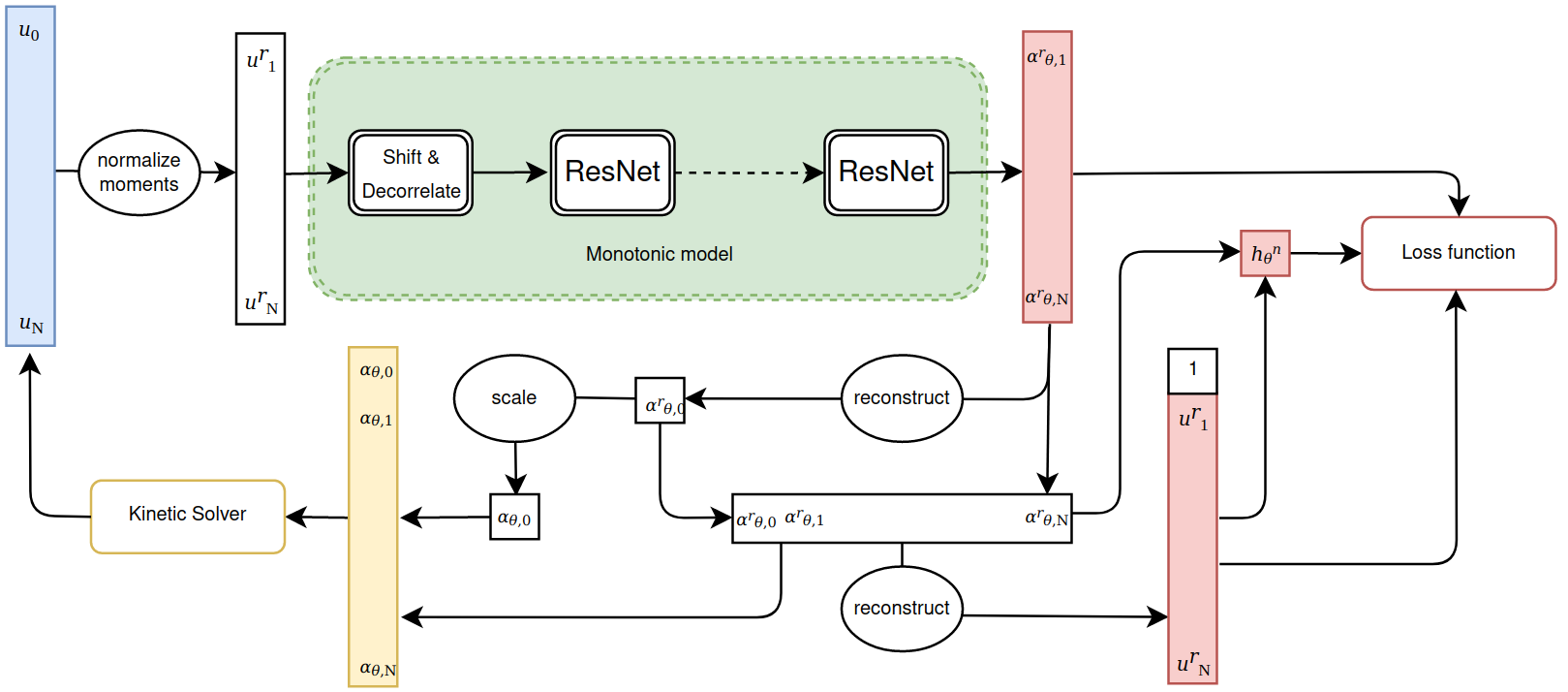}
  \caption{Input convex neural network closure. Model input vectors are depicted in blue. Red vectors are outputs, on which the model is trained on. When the trained model is employed, the yellow solution vector is used to construct the flux for the kinetic solver .}
  \label{fig_DirectNet}
\end{figure}
\section{Training Data and the generalization gap}\label{sec_data}
In this section, we present methods to generate training data for the neural network entropy closures and construct a local bound to the generalization gap for the approximated gradient of a input convex neural network.
\subsection{Data generation}\label{sec_data_gen}
In contrast to many applications of neural networks, the minimal entropy closure is a self contained problem with a clear data to solution map. Furthermore, the set of potential inputs ${\mathcal{R}^r}$ to the neural network is bounded and convex. This provides more options to sample training data than common machine learning applications. The training data distribution heavily influences the trained model and therefor the generalization gap~\cite{math_understanding_weinan}. The generalization gap is defined as 
\begin{align}
    \abs{L\left(X_T,\theta^*\right)-L\left(X,\theta^*\right)},
\end{align}
where $\theta^*=\min_\theta L\left(X_T,\theta,\right)$ is the set of parameters, that minimizes the training loss. The generalization gap describes the performance difference of the neural network with parameters $\theta^*$ between the training data set $X_T$ and any real world data $X$, i.e. the perfomance on unseen data. Thus we are left with a modelling decision about the data generation. \\
In related work~\cite{porteous2021datadriven}, the Lagrange multiplier $\alpha_u^r$ is sampled from a uniform grid in a cube  $[\alpha^r_\text{min},\alpha^r_\text{max}]^{\tilde{N}-1}\subset\mathbb{R}^{\tilde{N}-1}$ and then Eq.~\eqref{eq_recons_u} and Eq.~\eqref{eq_alpha0_recons} is used to reconstruct the corresponding moments $u^r\in\mathcal{R}^r$. In~\cite{SADR2020109644}, the authors sample $\alpha_u$ analogously to ~\cite{porteous2021datadriven} before reconstructing the kinetic density using Eq.~\eqref{eq_entropyRecosntruction}. However they iteratively update $\alpha$ until the reconstructed kinetic density has zero mean and unit variance, then they compute the moments of this kinetic density.\\
A popular method for generating data for neural network models that interact with numerical differential equation solvers is to generate the training data by direct simulation, see e.g.~\cite{huang2021machine,XIAO2021110521,Lei_2020}.
The idea of this concept is, to concentrate the training data generation on regions, which are likely to occur in real world. This is done by running simulation configurations similar to those expected in the application case. 
One expects that the model then performs better in a corresponding simulation test case than a model trained on general data. However, the model error might be much higher when encountering out of sample data and the model is specifically tailored to a small range of application configurations.
Another way to sample data using simulation is to use a Fourier series with random coefficients~\cite{huang2021machine2} to generate initial and boundary conditions. \\
In the following we present two data sampling strategies that take advantage of the structure of the data to solution map. We investigate the generalization gap for the prediction of $\alpha_\theta^r$ using input convex neural networks and derive a local error bound for the predicted $\alpha_\theta^r$ on unseen data. A further point of interest is the control over the boundary distance for a given sampling method.\\
\subsection{The boundary of the normalized realizable set}
The entropy minimization problem of Eq.~\eqref{eq_entropyDualOCP} becomes increasingly difficult to solve near the boundary of the realizable set $\partial\mathcal{R}$~\cite{AlldredgeHauckTits}. Close to $\partial\mathcal{R}^r$, the condition number of the  Hessian matrix of the entropy functional $h$ in Eq.~\eqref{eq_entropyFunctionalH} can become arbitrarily large, which causes numerical solvers to fail rather unforgivingly. This situation appears for moments of highly anisotropic distributions, vaccuum states, where $f(x,\cdot,t)=0$ or in the presence of strong sources~\cite{AlldredgeHauckTits}. At the boundary $\partial\mathcal{R}^r$, the Hessian matrix of $h$ is singular, and the minimal entropy problem has no solution. 
In the space of Lagrange multipliers, this translates to $\alpha_u^r$ growing beyond all bounds, which leads to numerical instabilities when computing the reconstruction of $u$. 
The simplest case of the minimal entropy closure, the $1D$ $M_1$ closure, already incorporates these difficulties. We can see in Fig.~\ref{fig_m1_1D_closure}a) the map $u_1^n\mapsto(\alpha^n_{0,u},\alpha^n_{1,u})$ and in Fig.~\ref{fig_m1_1D_closure}b the minimal entropy functional $h(u^n)$.  Since $\alpha^n_u$ is the gradient of $h$ with respect to $u^n$, the minimal entropy functional $h$ becomes steeper as $u^n_1$ approximates $\partial\mathcal{R}^r$. 
\begin{figure}   
    \centering
    \begin{minipage}{0.49\textwidth}
    \centering
        \includegraphics[width=\textwidth]{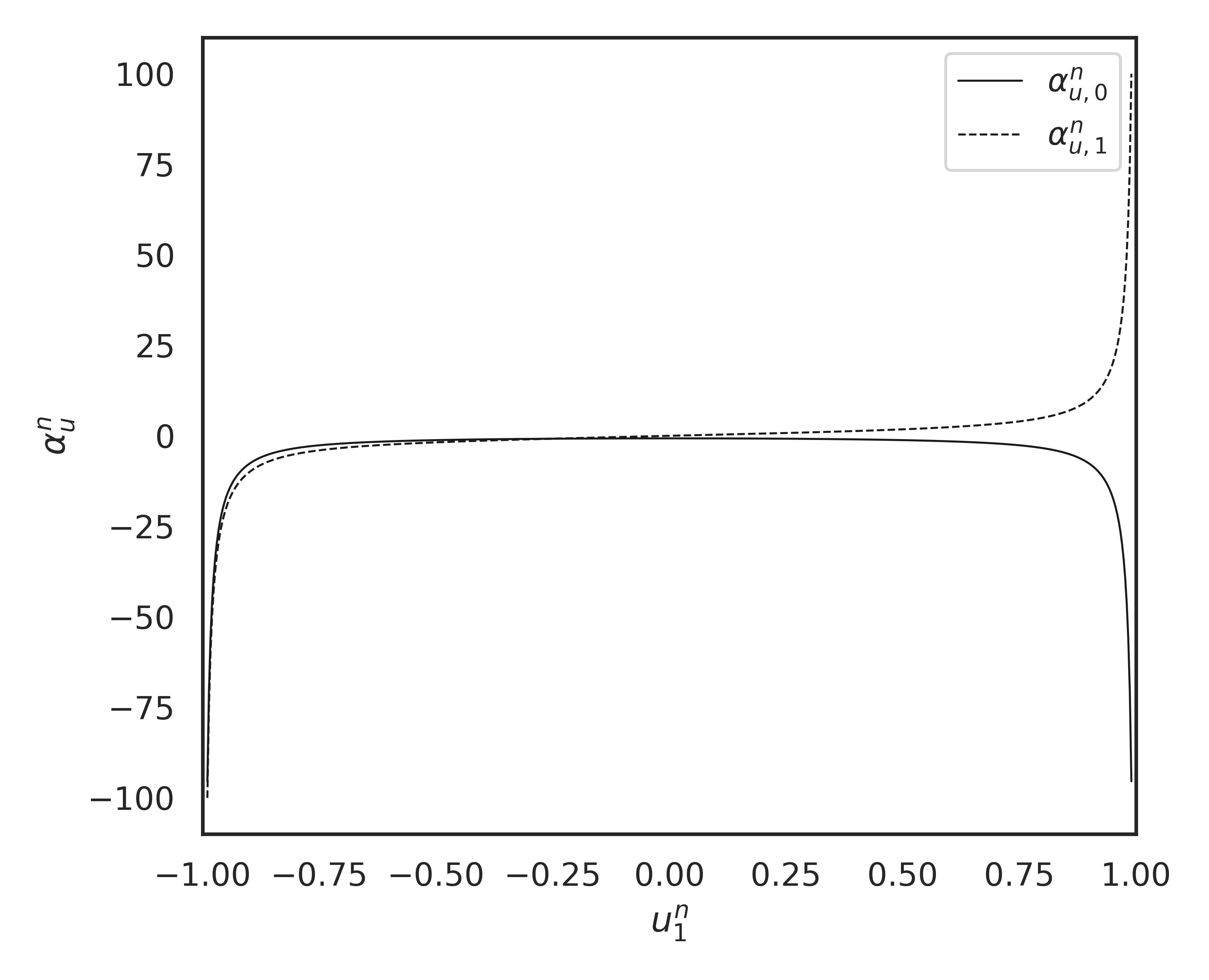}
            {\\a) $\alpha^n_u$ over $u^n_1$}
    \end{minipage}
     \begin{minipage}{0.49\textwidth}
       \centering
        \includegraphics[width=\textwidth]{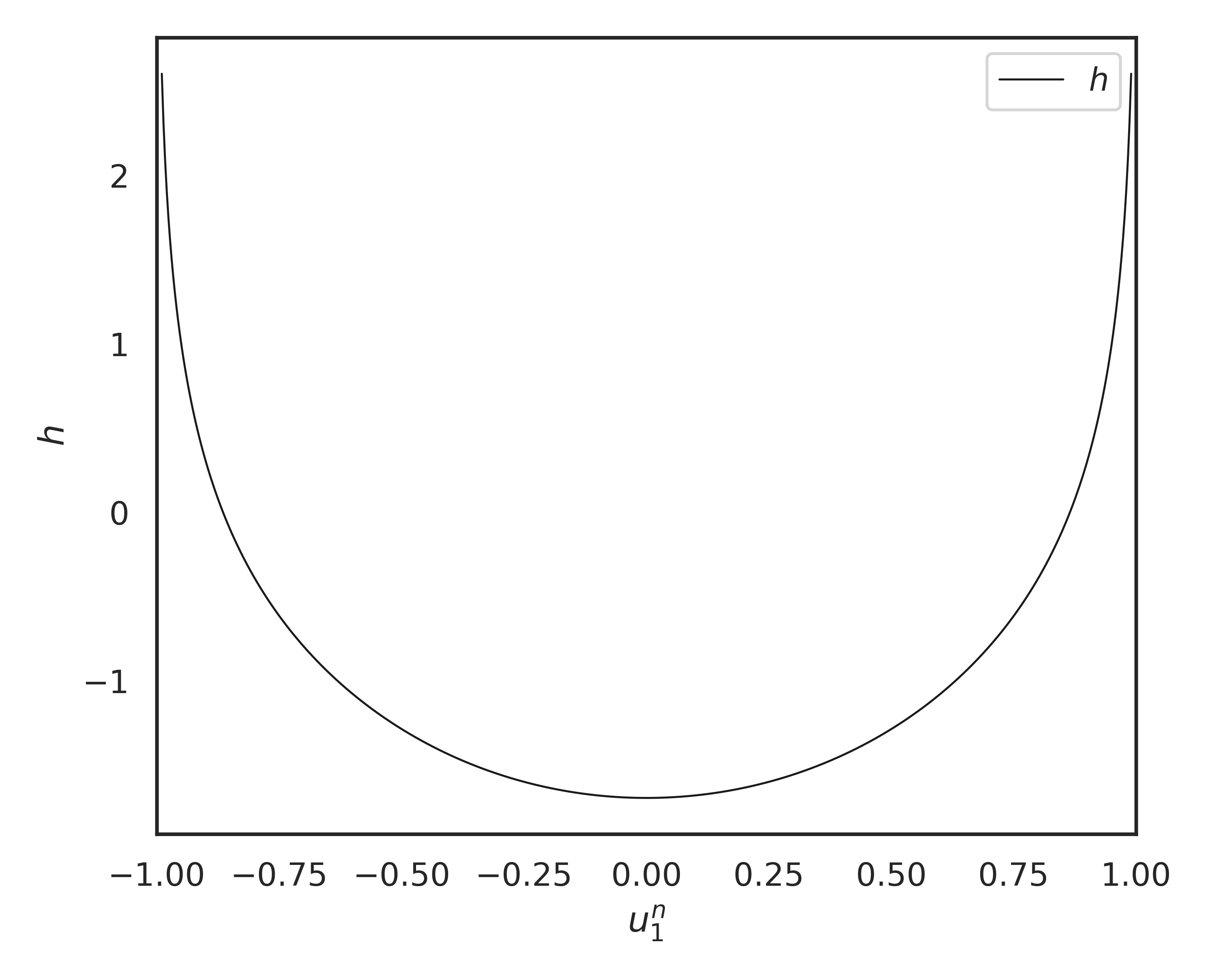}
            {\\b) $h$ over $u^n_1$} 
    \end{minipage}
    \caption{Data to solution maps for the $1D$ $M_1$ closure}
    \label{fig_m1_1D_closure}
\end{figure}
Note that both network architectures require this computation of Eq.~\eqref{eq_alpha0_recons} and \eqref{eq_recons_u} and thus need to evaluate the exponential during training time, which induces high vulnerability for numerical overflows, especially, when the networks are in the first iterations of the training process. A further critical issue for the training of neural networks is the fact that a wide range of output values causes the exploding gradient problem during the gradient descent for the weight updates. No matter if we sample $u$ and then compute $\alpha$ or vice versa, a sampling strategy must incorporate a meaningful distance measure to $\partial\mathcal{R}$. \\
Let us first consider proximity to the boundary in ${\mathcal{R}^r}$ directly. There exist extensive studies about the characterization of the boundary $\partial\mathcal{R}^r$ and we use results by Kershaw~\cite{Kershaw1976FluxLN} and Monreal~\cite{Monreal_210538}. For the Maxwell-Boltzmann entropy and a monomial basis, $\mathcal{R}^r$ can be described in one spatial dimension, i.e. $V,X\subset\mathbb{R}^1$ up to order $N=4$ using the inequalities
\begin{subequations}\label{eq_kershaw}
\begin{align}
    1&\geq u^r_1 \geq -1, \\
    1&\geq {u}^r_2 \geq ({u}^r_1)^2,\\
    {u}^r_2 -\frac{({u}^r_1-{u}^r_2)^2}{1-{u}^r_1} &\geq {u}^r_3 \geq - {u}^r_2 + \frac{({u}^r_1+{u}^r_2)^2}{1+{u}^r_1}, \\
    {u}^r_2 - \frac{({u}^r_1 -{u}^r_3)^2}{(1-{u}^r_2)} &\geq {u}^r_4 \geq  \frac{({u}^r_2)^3+({u}^r_3)^2-2 {u}^r_1 {u}^r_2 {u}^r_3}{{u}^r_2-({u}^r_1)^2},
\end{align}
\end{subequations}
whereas higher moment order moments can be characterized using the more general results in~\cite{Curto_recursiveness}.
Equation~\eqref{eq_kershaw} gives direct control over the boundary $\partial{\mathcal{R}^r}$, since equality in one or more of the equations describes a boundary of the normalized realizable set. In this case, the distance measure to $\partial\mathcal{R}^r$ is the norm distance. An example for normalized moments of the $M_2$ closure in $d=1$ spatial dimensions with norm boundary distance $0.01$ is shown in Fig.~\ref{fig_dataDist}a) and the corresponding Lagrange multipliers are shown in Fig.~\ref{fig_dataDist}b). Note, that in Fig.~\ref{fig_dataDist}a),c) and e), $\partial\mathcal{R}^r$ is displayed by the dotted black line.
More general results for arbitrarily high order moments in one spatial dimension can be found in~\cite{Kershaw1976FluxLN}. In three spatial dimensions necessary and sufficient conditions have been constructed by~\cite{Monreal_210538} for up to order $N\leq2$, but a full characterization of $\partial\mathcal{R}$ remains an open problem~\cite{lasserre}. \\
From a numerical point of view, it is interesting to construct a notion of distance to $\partial\mathcal{R}^r$ directly in the space of Lagrange multipliers, since it turns out that the magnitude of the $\norm{\alpha_u^r}$ has implications on the numerical stability of the neural network training process. A first idea consists of a norm bound of $\alpha_u^r$, i.e. $\norm{\alpha_u^r}<M<\infty$~\cite{AlldredgeFrankHauck,porteous2021datadriven,SADR2020109644}, which yields a convex subset of Lagrange multipliers.
Fig.~\ref{fig_dataDist}d) shows a uniform distribution of $\alpha_1^r$ and $\alpha_2^r$, where $\alpha^r_i\in[-40,40]$, and  Fig.~\ref{fig_dataDist}c) displays the corresponding reconstructed moments $u^n$.
\begin{figure}
    \centering
   \begin{minipage}{0.49\textwidth}
   \centering
       \includegraphics[width=\textwidth]{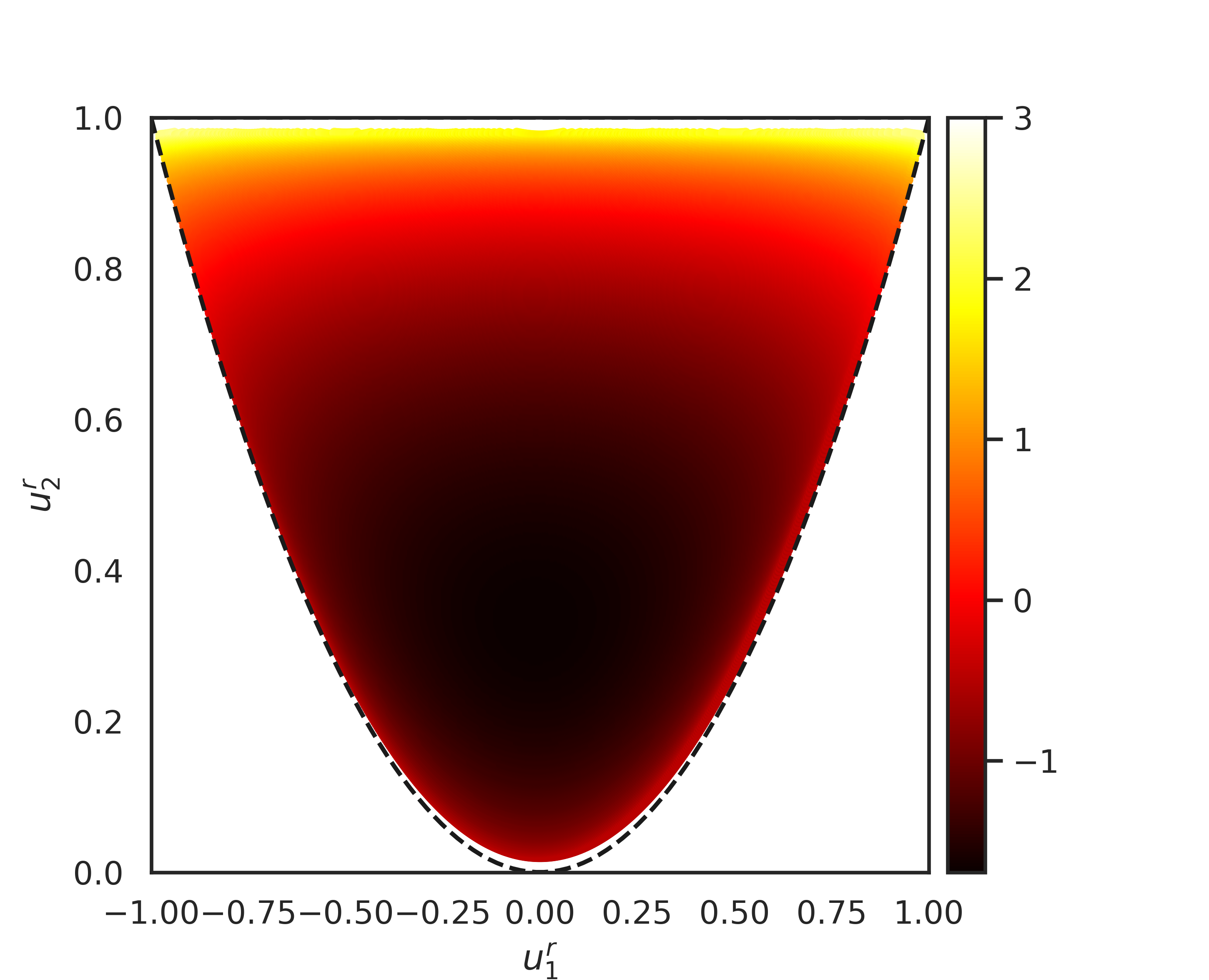}
               {\\a)$\mathcal{R}^r$, using uniform grid sampling of $u^r$}
   \end{minipage}
   \begin{minipage}{0.49\textwidth}
   \centering
       \includegraphics[width=\textwidth]{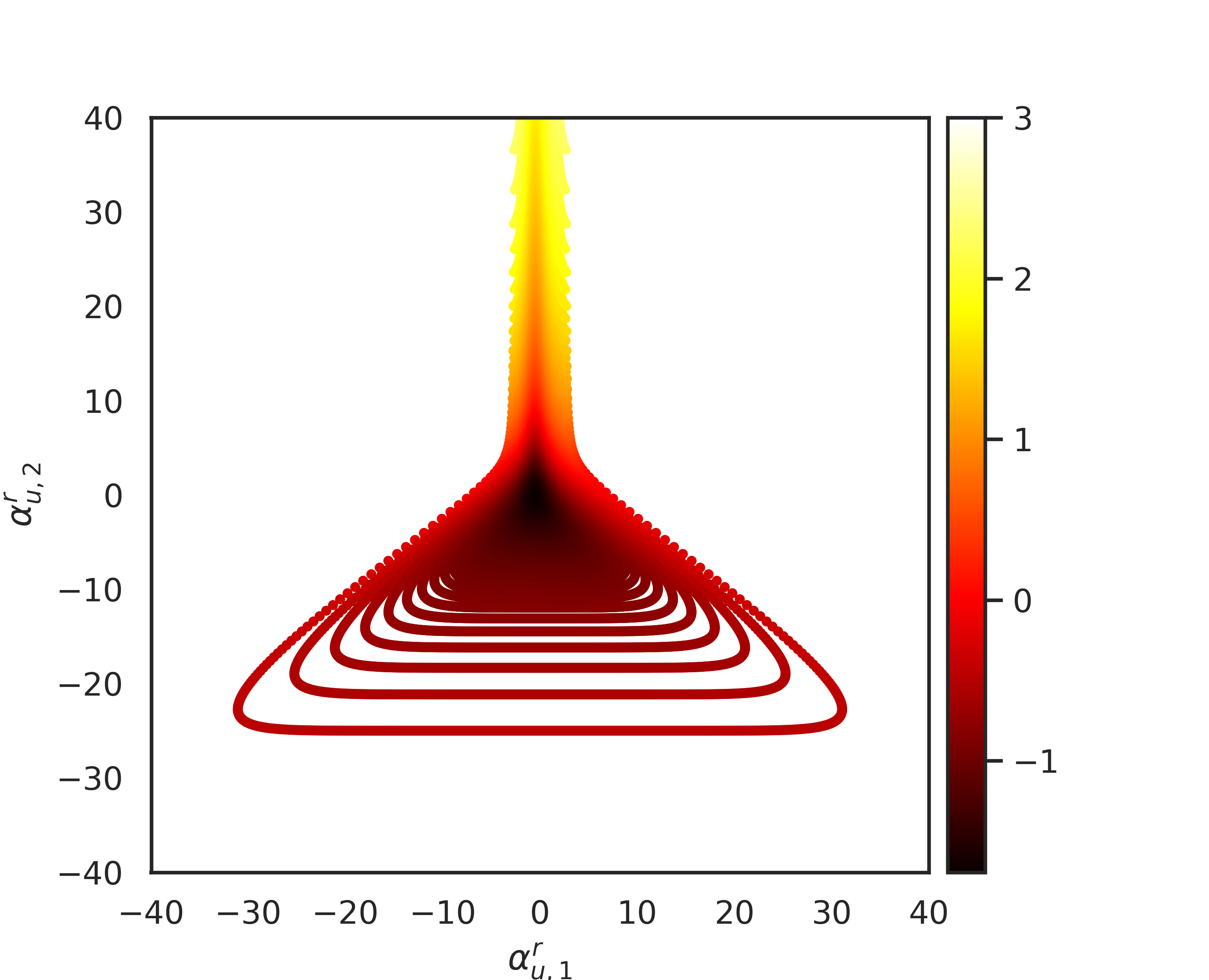}
               {\\b) $\alpha_u^r$, using uniform grid sampling of $u^r$}
   \end{minipage}
   \begin{minipage}{0.49\textwidth}
     \centering
        \includegraphics[width=\textwidth]{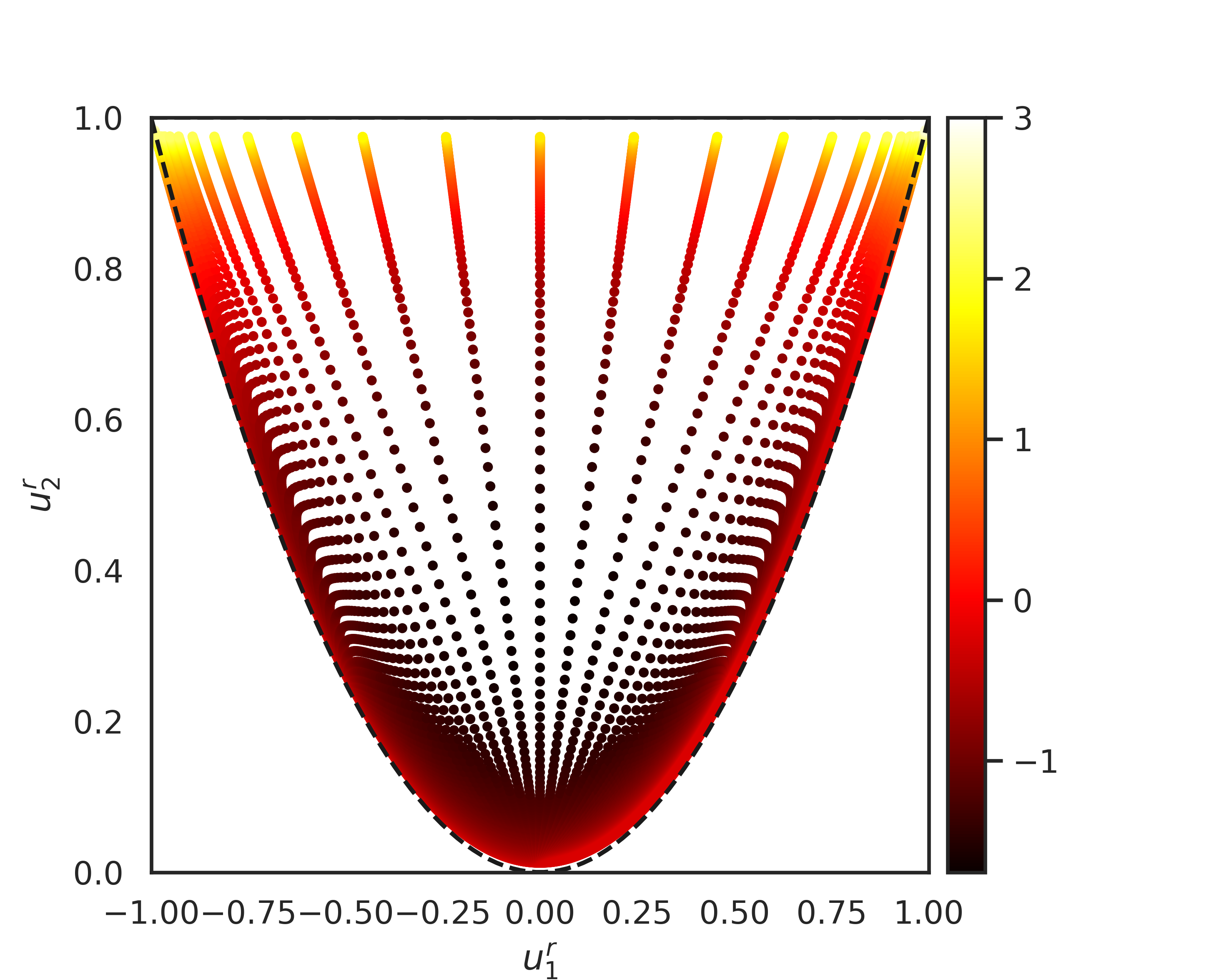}
        {\\c) $\mathcal{R}^r$, using uniform grid sampling of $\alpha_u^r$ with $\infty$ norm bound}
   \end{minipage}
   \begin{minipage}{0.49\textwidth}
     \centering
        \includegraphics[width=\textwidth]{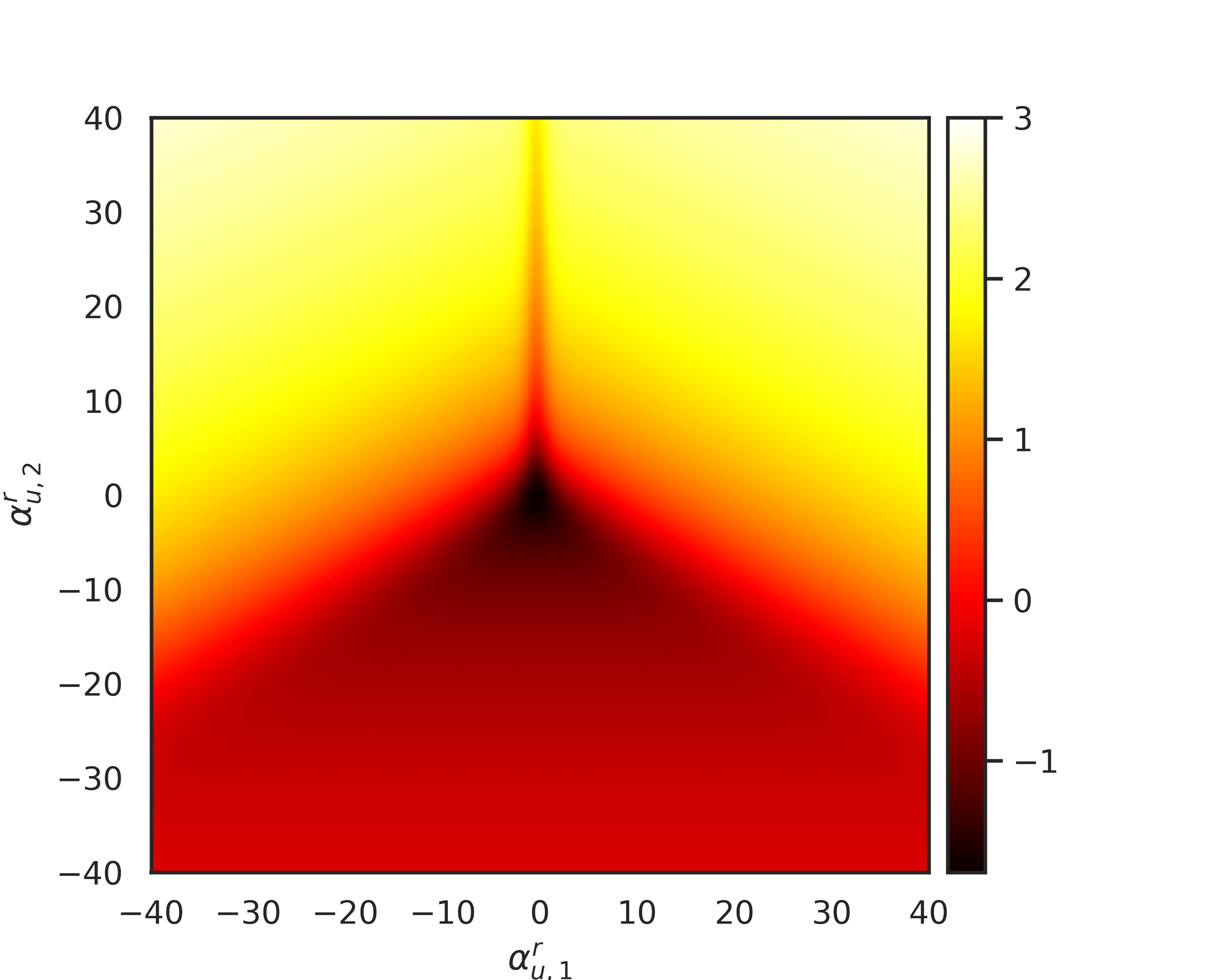}
        {\\d)  $\alpha_u^r$, using uniform grid sampling of $\alpha_u^r$ with $\infty$ norm bound}
   \end{minipage}
    \begin{minipage}{0.49\textwidth}
     \centering
        \includegraphics[width=\textwidth]{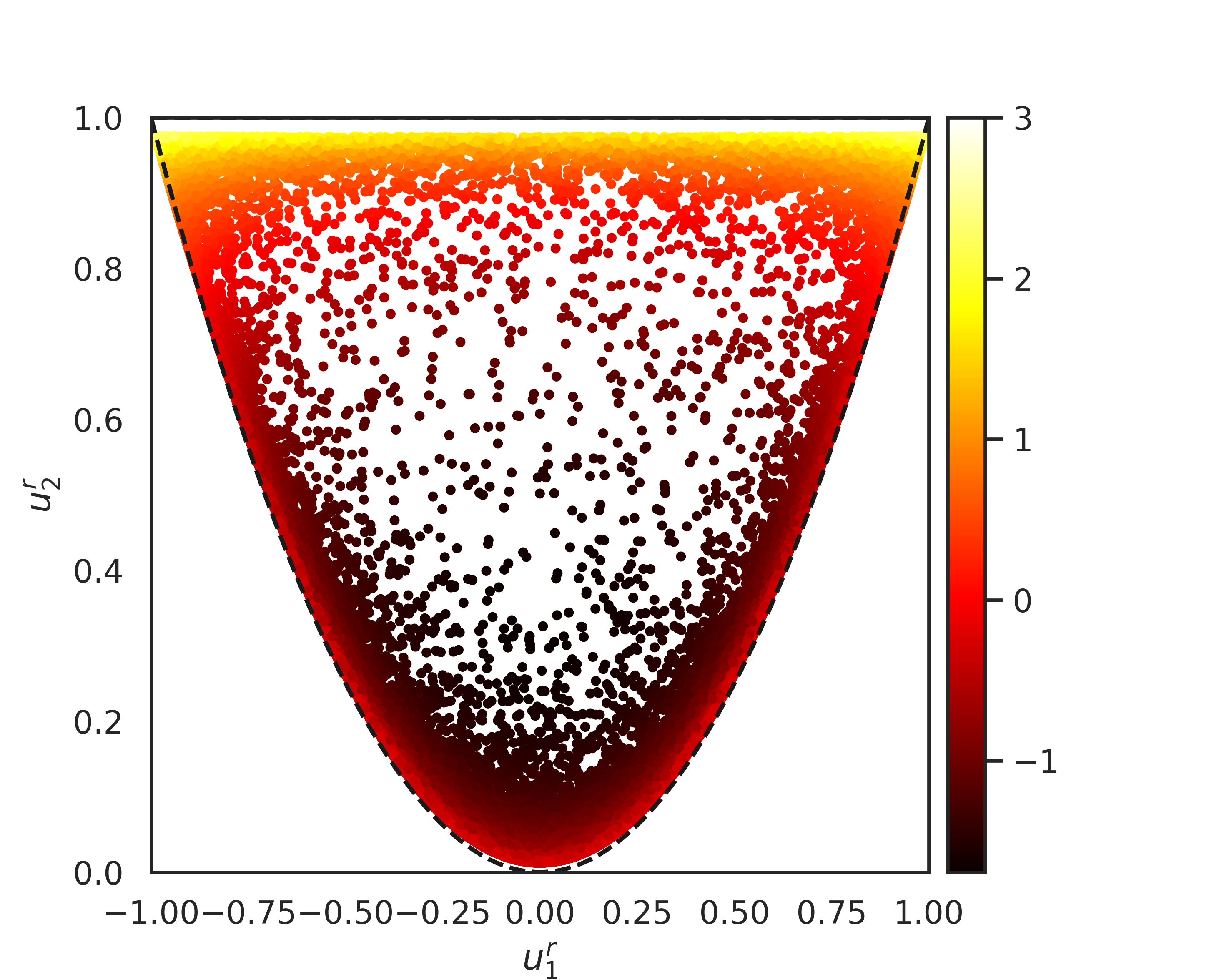}
        {\\e) $\mathcal{R}^r$, using uniform low-discrepancy sampling of $\alpha_u^r$ with eigenvalue bound}
   \end{minipage}
    \begin{minipage}{0.49\textwidth}
     \centering
        \includegraphics[width=\textwidth]{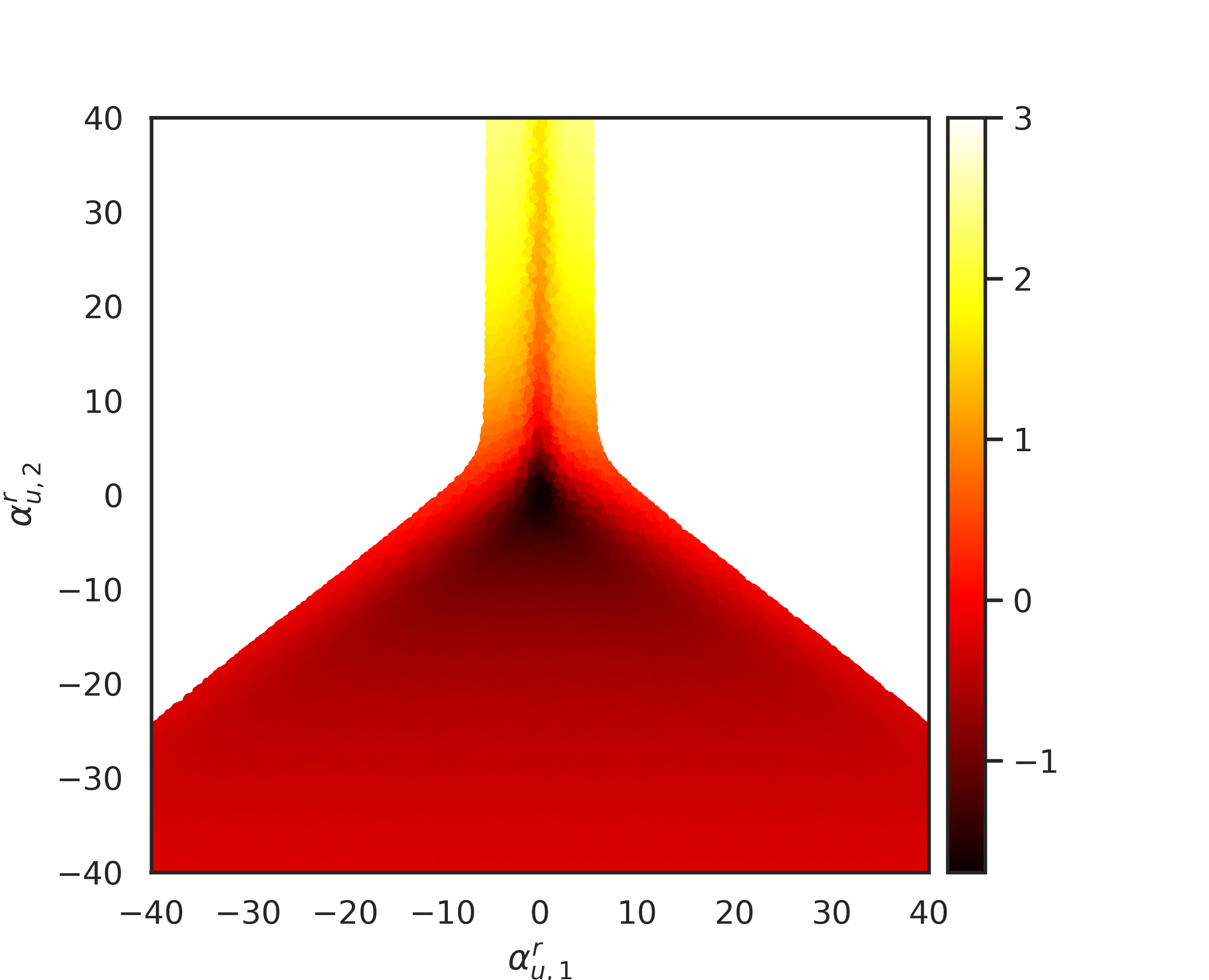}
        {\\f)  $\alpha_u^r$, using uniform low-discrepancy sampling of $\alpha_u^r$ with eigenvalue bound}
   \end{minipage}
    \caption{Scatter plots of $70000$ data points for the $1D$ $M_2$ model with data generated from different sampling strategies. The color bar indicates the value of the minimal entropy functional $h$.}
    \label{fig_dataDist}
\end{figure}
However, this approach gives no control over the boundary distance and produced very biased data distributions of $u^r$. A comparison of Fig.~\ref{fig_dataDist}c) and d) shows, that two thirds of the sampled moments are concentrated in the regions near $u^r=(-1,1)$ and $u^r=(1,1)$, which correspond to high entropy values $h>1.5$ and are colored yellow. In contrast, Fig.~\ref{fig_dataDist}a) and b) show that there are no samples in the regions $\alpha^r_{u,1}>10$ and $\abs{\alpha^r_{u,2}}>10$, since the corresponding moments $u^r$ are too close to the boundary $\partial\mathcal{R}^r$. As a conclusion, the second sampling strategy does not produce data with uniform distance to $\partial\mathcal{R}^r$.\\ 
Another approach is to use the condition number of the Hessian, of $h$ w.r.t $\alpha_u^n$ directly. Since the Hessian
Eq.~\eqref{eq_entropyDualOCP} w.r.t $\alpha$
\begin{align}\label{eq_hessian_dualOCP}
    H(\alpha_u^n)=\inner{m\times m\eta_*(\alpha_u^n\cdot m)},
\end{align}
is symmetric and positive definite, the condition number is the ratio of the biggest and smallest eigenvalue. The Hessian $H(\alpha_u^n)$ is singular at $\partial\mathcal{R}^r$, so the smallest possible eigenvalue $\lambda_{\min}$ is $0$, and we use $\lambda_{\min}$ to measure the distance to the boundary of the realizable set. Figure~\ref{fig_dataDist}e) and f) show a uniform sampling, where $\alpha_u^r$ is sampled with $\lambda_{\text{min}}>1\mathrm{e}{-7}$.
Note, that on the one hand, the near boundary region of $\mathcal{R}^r$ is more densely sampled than the interior, compare Fig.~\ref{fig_dataDist}a) and e), whereas there is no over-representation of the regions near $u^r=(-1,1)$ and $u^r=(1,1)$ and the set of sampled Lagrange multipliers, see Fig.~\ref{fig_dataDist}f), is similar in shape to the Lagrange multipliers in Fig.~\ref{fig_dataDist}b).

\subsection{Generalization gap for input convex neural networks trained in Sobolev norm}
In this sections we present our findings to the generalization gap for the derivative approximation of a convex neural network $\mathcal{N}_\theta$ that approximates a convex function $f^*$. The network is trained (at least) in Sobolev norm, i.e. the training loss reads
\begin{align}
    L\left(X_T,\theta^*\right) = \frac{1}{\abs{T}}\sum_{i\in T}\left( \norm{f^*(x_i)-\mathcal{N}_{\theta^*}(x_i)}_2^2 + \norm{\nabla f^*(x_i)-\nabla\mathcal{N}_{\theta^*}(x_i)}_2^2\right),
\end{align}
when evaluating the loss over the whole data set.
In the following, we assume that the network is trained, i.e. $L\left(X_T,\theta^*\right) = 0$. Thus we have
\begin{align}
    f^*(x_i) = \mathcal{N}_\theta(x_i), \qquad \nabla f^*(x_i) = \nabla \mathcal{N}_\theta(x_i) \qquad \forall x_i\in X_T.
\end{align}
Furthermore, let the sampling domain $X\subset\mathbb{R}^d$ be convex and bounded and the neural network be convex by design. We are interested in the generalization gap of the derivative neural network with respect to its input variable.
To this end, we consider the local generalization gap of the neural network when using  $d+1$ training data points $X_d=\menge{x_0,\dots,x_d}$, if the sampling space $X\subset\mathbb{R}^d$ has dimension $d$. Let $\mathcal{C}(X_d)$ be the convex hull of $X_d$ and $x^*\in\mathcal{C}(X_d)$, which we call the point of interest. We assume w.l.o.g $x^*=0$; if this does not hold, one can consider the shifted setting $\mathcal{C}^\dag(X_d)=\mathcal{C}(X_d)-x^*$, $f^\dag=f^*(\cdot + x^*)$, $x^\dag =  x - x^*$ instead. 
Using the characterization of a monotonic function, we define the set $A$
\begin{align}\label{eq_feasibleGradients}
    A=\menge{v\in\mathbb{R}^d \vert v\cdot x_i \leq \nabla f^*(x_i)\cdot x_i, i=0,\dots,d}
\end{align}
\begin{figure}
    \centering
        \includegraphics[width=\textwidth]{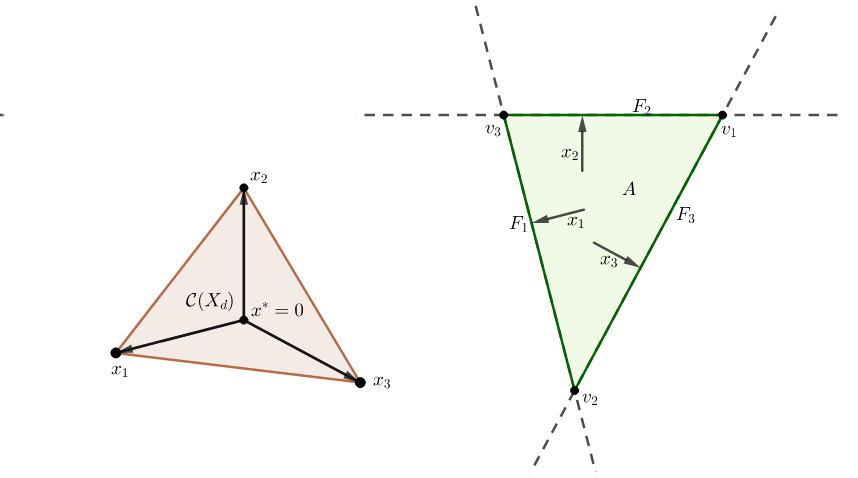}
    \caption{Illustration of the convex hull of the training points $\mathcal{C}(X_d)$ (left) and the set of feasible gradients $A$ (right) for $d=2$. The normal vectors to the faces $F_i$ are the vectors of the training points $x_i$.}
    \label{fig_feasible_polygon}
\end{figure}
which is the dual polygon defined by the gradients at the sampling points and the point of interest and can be seen in Fig.~\ref{fig_feasible_polygon}. $A$ contains all values which the gradient of a convex function that has fixed gradients at the sampling points $x\in X_d$ can attain at the point of interest $x^*$.
\begin{theorem}\label{theo_A}
Let $f^*$ be convex, $x^*=0$ the point of interest in the interior of $\mathcal{C}(X_d)$. Then $A$ is a bounded polyhedron, whith $d+1$ faces, defined by $F_i=\menge{v\in\mathbb{R}^d\vert v\cdot x_i = \nabla f^*(x_i)\cdot x_i}$ and vertices $v_i=\bigcap_{j\not=i}F_j$.
\end{theorem}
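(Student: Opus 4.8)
The plan is to exhibit $A$ as the \emph{dual simplex} generated by the constraint normals $x_0,\dots,x_d$, and then read off its facets and vertices from that structure; convexity of $f^*$ enters at exactly one point, namely in checking that the candidate vertices are feasible.

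I would begin with the geometric preliminaries forced by $x^*=0$ lying in the interior of $\mathcal{C}(X_d)$. Since $X_d$ consists of exactly $d+1$ points in $\mathbb{R}^d$ whose hull has nonempty interior, the hull is full-dimensional, so $x_0,\dots,x_d$ are affinely independent and $0$ has strictly positive barycentric coordinates: there exist $\lambda_0,\dots,\lambda_d>0$ with $\sum_j\lambda_j=1$ and $\sum_j\lambda_j x_j=0$. Two linear-algebra facts then follow and will be used repeatedly. First, $\{x_j\}$ spans $\mathbb{R}^d$, because the differences $x_j-x_0$ already span. Second, the space of linear relations $\menge{c:\sum_j c_j x_j=0}$ is one-dimensional and spanned by $\lambda$; consequently, for any fixed $i$, the $d$ vectors $\menge{x_j:j\neq i}$ are linearly independent, since a relation among them extends by $c_i=0$ to a relation among all $x_j$, which must be a multiple of $\lambda$, and $\lambda_i\neq 0$ forces $c=0$.

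Next I would establish that $A$ is a bounded polyhedron and that each of the $d+1$ inequalities is non-redundant. As a finite intersection of closed half-spaces, $A$ is a polyhedron. For boundedness I compute its recession cone $\menge{w:x_j\cdot w\leq 0,\ \forall j}$: if $x_j\cdot w\leq 0$ for all $j$, then $0=(\sum_j\lambda_j x_j)\cdot w=\sum_j\lambda_j(x_j\cdot w)\leq 0$, and since every $\lambda_j>0$ each summand vanishes, giving $x_j\cdot w=0$ for all $j$, hence $w=0$ by spanning; the recession cone is trivial, so $A$ is bounded. To see that each hyperplane $F_i$ supports a genuine face, I argue by non-redundancy: deleting the $i$-th constraint leaves $d$ half-spaces whose normals $\menge{x_j:j\neq i}$ are linearly independent, so the system $x_j\cdot w=-1$ ($j\neq i$) has a nonzero solution $w$, whence that reduced polyhedron has a nontrivial recession cone and is unbounded. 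Since $A$ is bounded, the $i$-th constraint cannot be redundant, and all $d+1$ hyperplanes $F_i$ therefore bound facets.

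Finally I turn to the vertices, which is where the main obstacle lies. Set $b_j:=\nabla f^*(x_j)\cdot x_j$ and define $v_i=\bigcap_{j\neq i}F_j$ as the unique solution of $v\cdot x_j=b_j$ for $j\neq i$; this is well posed because the $d$ normals $\menge{x_j:j\neq i}$ are linearly independent. Using $\lambda_i x_i=-\sum_{j\neq i}\lambda_j x_j$ gives $v_i\cdot x_i=-\lambda_i^{-1}\sum_{j\neq i}\lambda_j b_j$, so the one dropped inequality $v_i\cdot x_i\leq b_i$ is equivalent to $\sum_j\lambda_j b_j\geq 0$. Verifying this last inequality is the crux of the proof and the sole place convexity is needed: by monotonicity of $\nabla f^*$ applied to the pair $(x_j,x^*=0)$ one has $b_j=\nabla f^*(x_j)\cdot x_j\geq\nabla f^*(0)\cdot x_j$, and summing against the weights $\lambda_j>0$ yields $\sum_j\lambda_j b_j\geq\nabla f^*(0)\cdot\sum_j\lambda_j x_j=0$. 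Hence each $v_i\in A$ and activates exactly the $d$ facets $F_j$, $j\neq i$, so it is a vertex; conversely, any vertex of the bounded polyhedron $A$ must make $d$ linearly independent constraints active, and since only $d+1$ constraints are available these vertices are precisely the $v_i$. This displays $A$ as a simplex with the asserted $d+1$ faces $F_i$ and vertices $v_i=\bigcap_{j\neq i}F_j$, completing the argument.

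\begin{proof}

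\end{proof}
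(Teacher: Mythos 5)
Your proposal is correct, and its key step goes by a genuinely different route than the paper's proof. Both arguments agree on the linear-algebraic groundwork: since $x^*=0$ lies in the interior of $\mathcal{C}(X_d)$, any $d$ of the vectors $x_0,\dots,x_d$ are linearly independent, so each $v_i=\bigcap_{j\neq i}F_j$ is a well-defined point (your derivation via the one-dimensional relation space spanned by the barycentric weights $\lambda$ is the careful version of the paper's claim). The divergence is in proving feasibility $v_i\in A$, i.e. the one non-trivial inequality $v_i\cdot x_i\leq \nabla f^*(x_i)\cdot x_i$. The paper argues by orientation: it studies the signs of $(v_j-v_i)\cdot x_i$, shows by a monotonicity contradiction that all signs agree, and then rules out ``inward-facing normals'' by an informal appeal to $A=\emptyset$ contradicting continuity of the gradient. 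You instead compute $v_i\cdot x_i=-\lambda_i^{-1}\sum_{j\neq i}\lambda_j b_j$ exactly from the barycentric relation $\sum_j\lambda_j x_j=0$, reducing feasibility to the single inequality $\sum_j\lambda_j b_j\geq 0$, which follows in one line from monotonicity of $\nabla f^*$ paired against the point of interest itself. This is shorter, avoids the paper's vaguest step, and the non-strict inequality correctly covers the degenerate case (e.g. $f^*$ affine, where all $b_j=\nabla f^*\cdot x_j$ and $A$ collapses to the single point $\{\nabla f^*\}$), in which the paper's concluding strict inequality $v_i\cdot x_i<\nabla f^*(x_i)\cdot x_i$ actually fails. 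Your proof also supplies pieces the paper leaves implicit or omits: boundedness of $A$ via the trivial recession cone (the paper only shows $A$ \emph{contains} the simplex of the $v_i$, which by itself does not bound $A$), non-redundancy of each of the $d+1$ constraints, and the converse statement that the $v_i$ exhaust all vertices. One small caveat: you invoke $\nabla f^*(0)$, which requires differentiability of $f^*$ at $x^*$; this is consistent with the paper's setting (gradients of $f^*$ are the objects being approximated), and in any case your inequality $b_j\geq g\cdot x_j$ holds verbatim for any subgradient $g\in\partial f^*(0)$, so nothing is lost.
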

\begin{proof}
The proof is structured in two parts. First, we show that the vertices $v_i\in\mathbb{R}^d$ are well defined, if $x^*$ is element of the interior of $\mathcal{C}(X_d)$. Second, we show that all $v_i\in A$. Thus any convex combination of $v_i$ is in $A$ and therefore, $A$ is defined by a (bounded) polyhedron with vertices $v_i$. \\
\textbf{1.} We show that $v_i$ are well defined. 
First, if the point of interest is element of the interior of $\mathcal{C}(X_d)$, then all $x_i\in{X_d}$ are linearly independent.
The boundary of the set of feasible gradients with respect to the sampling point $x_i$ and the point of $x^*$ interest consists of the hyperplane given by
\begin{align}
    F_i=\menge{v\in\mathbb{R}^d\vert v\cdot x_i = \nabla f^*(x_i)\cdot x_i}. 
\end{align}
Clearly, if all $x_i\not=0$ are linearly independent, no hyperplanes are parallel or lie in each other. The proper intersection of $d$ hyperplanes in $\mathbb{R}^d$ yields a single point,
\begin{align}\label{eq_vertex_polyhedron}
    v_i = \bigcap_{j\not=i} F_j.
\end{align}
which we define as vertex $v_i\in\mathbb{R}^d$, that touches all hyperplanes except $F_i$. \\
\textbf{2.} We show that all $v_i\in A$. This means, that we have to show
\begin{align}
    v_j\cdot x_i \leq \nabla f^*(x_i)\cdot x_i,\qquad \forall i,j =0,\dots,d
\end{align}
By the definition of $v_j$, we have
\begin{align}
    v_j\in F_i ,\qquad j\not = i,
\end{align}
so we are only concerned with  
\begin{align}
     v_i\cdot x_i \leq \nabla f^*(x_i)\cdot x_i.
\end{align}
We start by stating an auxiliary statement. Let $p_i^j=v_j-v_i$ for $i\not=j$. If $X_d$ is linearly independent and $x^*=0$ is in the interior  of $\mathcal{C}(X_d)$, then 
\begin{align}
    \text{sign}(p_i^j\cdot x_i) =  \text{sign}(p_k^l\cdot x_k),\qquad \forall i\not=j,k\not=l
\end{align}
Linear independence of $x_i\in X_d$ and $x^*=0$ being in the interior  of $\mathcal{C}(X_d)$ translates to
\begin{align}
    0 = \sum_{i=0}^N a_ix_i, \qquad a_i>0.
\end{align}
We have
\begin{align}
    p_i^j\cdot x_i&=\frac{-1}{a_i}\sum_{m\not=i}a_m\left(v_j-v_i\right)\cdot  x_m\\
    &=\frac{-1}{a_i}\left(\sum_{m\not=i,j}a_m\left(v_j-v_i\right)\cdot x_m + a_j\left(v_j-v_i\right)\cdot x_j\right)\\
    &=\frac{-1}{a_i}\left(\sum_{m\not=i,j}a_m\left(v_j\cdot x_m-v_i\cdot x_m\right) + a_j\left(v_j-v_i\right)\cdot x_j\right)\\
    &=\frac{-1}{a_i}\left(\sum_{m\not=i,j}a_m\left(\nabla f^*(x_m)\cdot x_m-\nabla f^*(x_m)\cdot x_m\right) + a_j(v_j-v_i)\cdot x_j\right)\\
     &=\frac{-1}{a_i}a_j(v_j-v_i)x_j = \frac{a_j}{a_i}(v_i-v_j)x_j = \frac{a_j}{a_i}p_j^i\cdot x_j,
\end{align}
where we use the definition of the Face $F_m$.
Since $\frac{a_j}{a_i}$ is positive $\text{sign}(p_i^j\cdot x_i) =  \text{sign}(p_j^i\cdot x_j)$ follows for all $i\not=j$.
Assume $p_i^j\cdot x_i>0$ and $p_i^h\cdot x_i <0$. Then
\begin{align}
    v_j\cdot x_i>v_i\cdot x_i>v_h\cdot x_i.
\end{align}
Thus, we have
\begin{align}
   0< (v_j-v_h)\cdot x_i =\nabla(f^*(x_i)\cdot x_i - \nabla(f^*(x_i)\cdot x_i = 0,
\end{align}
which is an contradiction to monotonicity of the gradient. Thus, 
\begin{align}
    \text{sign}(p_i^j\cdot x_i) =  \text{sign}(p_i^k\cdot x_i) = \text{sign}(p_k^i\cdot x_k) = \text{sign}(p_k^l\cdot x_l),\qquad \forall i\not=j,k\not=l.
\end{align}
This means, that all face normals $x_i$ are either facing outward of the polyhedron defined by the vertices $\menge{v_i}$ or all face inward. 
Assume inward facing normals, then for each face of the polyhedron created by $A$, the feasible set is the half space outside the current face of the polyhedron. Due to convexity the polyhedron defined by $\menge{v_i}$, this would imply, that $A=\emptyset$, which contradicts continuity of the gradient of $f^*$
Thus we have outward facing normals.
Finally, we have
\begin{align}
    0<(v_j-v_i)\cdot x_i = \nabla f^*(x_i)\cdot x_i - v_i\cdot x_i,
\end{align}
and thus $ v_i\cdot x_i < \nabla f(x_i)\cdot x_i $, i.e. $v_i\in A$ for all $i$. Thus $A$ is indeed a polygon defined by the vertices $v_i$. By convexity, the polyhedron $A$ contains all feasible gradients of the point of interest. \\
\end{proof}
A direct consequence of Theorem~\ref{theo_A} is, that we get an local upper bound for the generalization gap of the gradient of an input convex network trained on a given training data set $X_T$
\begin{align}
    \norm{\nabla f^*(x) -\nabla\mathcal{N}_\theta(x) } \leq \text{diam}(A_{x^*}),
\end{align}
where $A_{x^*}$ is the polyhedron of feasible gradients w.r.t the point of interest $x^*$ and the local training points $X_d$. A first conclusion is that the diam$(A)$ does not depend on the distance between the point of interest and any of the local training data points $X_d$, since by definition of $A$ in Eq.~\eqref{eq_feasibleGradients}, one can divide by the norm of $x_i-x^*$ on both sides of the inequality for the boundary of $A$. Thus in the following we assume normalized $x_i$. \\
The following theorem gives a more precise representation of diam$(A_{x^*})$.
\begin{theorem}\label{lem_A}
Let A be defined by Eq.~\eqref{eq_feasibleGradients} and $v_i$ be defined by Eq.~\eqref{eq_vertex_polyhedron}. Let the relative vectors $x_i$ have unit length and $v_i$ is the vertex opposing the face $F_i$. 
The matrix $X_i= [x^n_0,\dots,x^n_{i-1},x^n_{i+1},\dots,x^n_{d}]^T$ contain the vectors of normalized sampling points relative to the point of interest $x_*$, i.e. $x^n_i = x_i/\norm{x_i}_2$. \\
Furthermore, let $b_i=[\nabla f^*(x_0)\cdot x^n_0, \dots, \nabla f^*(x_{i-1})\cdot x^n_{i-1},f^*(x_{i+1})\cdot x^n_{i+1},\dots, \nabla f^*(x_{d})\cdot x^n_{d}]^T$ be a vector. Under the assumptions of Theorem~\ref{theo_A}, the vertex $v_i$ is given by
\begin{align}
    X_iv_i=b_i
\end{align}
Additionally, we can estimate the distance between two vertices $v_i$ and $v_j$ by
\begin{align}
    \norm{v_i - v_j}_2 \leq \left(
\left\|X_i^{-1}\right\|\,+\,\left\|X_j^{-1}\right\|\right) C_{x^*},
\end{align}
where $C_{x^*}=\max_{k,l}\norm{\nabla f^*(x_k)-\nabla f^*(x_l)}_2$ and $\norm{X_i^{-1}}$ denotes the corresponding operator norm of $X_i^{-1}$.
\end{theorem}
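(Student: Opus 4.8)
The plan is to prove the two claims in turn: first the linear characterization $X_iv_i=b_i$ of each vertex, then the distance estimate via a recentering argument that reduces everything to bounding differences of gradients by $C_{x^*}$.

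For the first part, recall from Theorem~\ref{theo_A} that $v_i=\bigcap_{j\neq i}F_j$, so $v_i\in F_j$ for every $j\neq i$, which by the definition of the face means $v_i\cdot x_j=\nabla f^*(x_j)\cdot x_j$. Since each $x_j\neq 0$, I divide both sides by $\norm{x_j}_2$ to rewrite this in terms of the normalized vectors $x_j^n=x_j/\norm{x_j}_2$, obtaining $x_j^n\cdot v_i=\nabla f^*(x_j)\cdot x_j^n$ for all $j\neq i$. Stacking these $d$ identities (one for each $j\in\{0,\dots,d\}\setminus\{i\}$) as rows produces exactly the system $X_iv_i=b_i$. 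Because $x^*$ lies in the interior of $\mathcal{C}(X_d)$, Theorem~\ref{theo_A} guarantees that the $x_j$, and hence the $x_j^n$, are linearly independent; thus the square matrix $X_i$ is invertible and $v_i=X_i^{-1}b_i$ is its unique solution, which confirms that the vertex is well defined and given by this formula.

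For the distance estimate, I would fix an arbitrary common reference gradient, say $w=\nabla f^*(x_0)$ (any fixed $\nabla f^*(x_r)$ works, and $w$ need not be one of the rows), and recenter each vertex about it. Writing $v_i-w=X_i^{-1}(b_i-X_iw)$ and likewise for $v_j$, the triangle inequality gives
\begin{align}
\norm{v_i-v_j}_2 \leq \norm{v_i-w}_2+\norm{v_j-w}_2 \leq \norm{X_i^{-1}}\,\norm{b_i-X_iw}+\norm{X_j^{-1}}\,\norm{b_j-X_jw}.
\end{align}
The key point is that the $m$-th entry of $b_i-X_iw$ equals $(\nabla f^*(x_m)-w)\cdot x_m^n$, and by Cauchy--Schwarz together with $\norm{x_m^n}_2=1$ each such entry is bounded in absolute value by $\norm{\nabla f^*(x_m)-\nabla f^*(x_0)}_2\leq C_{x^*}$. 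Consequently $\norm{b_i-X_iw}$ is controlled by $C_{x^*}$, and both terms are bounded by $\norm{X_i^{-1}}C_{x^*}$ and $\norm{X_j^{-1}}C_{x^*}$ respectively, yielding the claimed inequality.

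The main obstacle I anticipate is the bookkeeping of norms in the second part: the entrywise bound by $C_{x^*}$ is natural, but converting it into a clean bound on $\norm{b_i-X_iw}$ without an extraneous dimensional factor requires choosing the operator norm consistently (e.g.\ the $\ell^\infty\to\ell^2$ operator norm, so that an entrywise bound on the right-hand side controls the $\ell^2$ length of $v_i-w$), which is presumably why the statement leaves ``$\norm{X_i^{-1}}$'' as ``the corresponding operator norm'' rather than fixing it. A secondary, easily dispatched subtlety is that recentering about a single common reference $w$ must be legitimate for both vertices simultaneously; this is harmless since $w$ is independent of $i$ and $j$ and simply splits across the triangle inequality.
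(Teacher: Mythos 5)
Your proposal is correct and follows essentially the same route as the paper's proof: the vertex equation comes from stacking the face conditions $v_i\cdot x_j^n=\nabla f^*(x_j)\cdot x_j^n$ for $j\neq i$, and the distance bound comes from recentering both vertices about a common reference gradient, bounding each entry of the resulting right-hand side by $C_{x^*}$ via Cauchy--Schwarz and unit norms, and reading $\norm{X_i^{-1}}$ as the $\ell^\infty\to\ell^2$ operator norm before applying the triangle inequality. The only cosmetic difference is that the paper keeps the reference point $\xi$ arbitrary and optimizes its choice at the end, whereas you fix $w=\nabla f^*(x_0)$ from the outset; both choices yield the stated bound.
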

\begin{proof}
By definition of $v_i=  \bigcap_{j\not=i} F_j$ and the fact that we can divide Eq.~\eqref{eq_feasibleGradients} by $\norm{x_i}$ we get the linear systems. Let for $\xi\in\mathbb{R}^d$. 
\begin{align}
    C_\xi = \max_{k=0, \dots, d} \norm{\nabla f(x_k)-\xi }_2
\end{align}
Then we have
\begin{align}
    \abs{x_j \cdot (v_i-\xi)  }= \abs{x_i\cdot \left(\nabla f^*(x_i)-\xi\right)}\leq \norm{x_i}_2 C_\xi = C_\xi\quad \forall i=0,\dots,d
\end{align}
since $x_i$ has unit norm.
Thus each entry of the vector $X_iv_i$ has an absolute value smaller than $C_\xi$. We interpret $X_i$ as an linear operator mapping $(\mathbb{R}^d,\norm{\cdot}_2)\rightarrow (\mathbb{R}^d,\norm{\cdot}_\infty)$.
$X_i=[x_0,\dots,x_{i-1},x_{i+1},\dots,x_{d}]^T$ is invertible, if $x^*$ is in the interior of $\mathcal{C}(X_d)$ and defines a mapping $(\mathbb{R}^d,\norm{\cdot}_\infty)\rightarrow (\mathbb{R}^d,\norm{\cdot}_2)$. 
Consequently, we can estimate
\begin{align}
    \norm{X_i (v_i-\xi)}_\infty &\leq C_\xi, \\
    \norm{v_i - \xi}_2 &\leq \norm{X_i^{-1}} C_\xi.
\end{align}
Finally we get
\begin{align}
    \norm{v_i -v_j}_2 \leq  \norm{v_i -\xi}_2 + \norm{\xi - v_j}_2 \leq  
    \left(
\left\|X_1^{-1}\right\|+\left\|X_2^{-1}\right\|\right) C_\xi,
\end{align}
We can choose  $\xi = \nabla f^*(x_l)$ s.t.
\begin{align}
    \max_{k=0, \dots, d} \norm{\nabla f^*(x_k)-\nabla f^*(x_l) }_2 =  \max_{k,l=0, \dots, d} \norm{\nabla f^*(x_k)-\nabla f^*(x_l) }_2 =: C_{x^*}
\end{align}
\end{proof}
Let us draw some conclusions from the proof.
First, we have as a direct consequence 
\begin{align}
    \norm{\nabla f^*(x) -\nabla\mathcal{N}_\theta(x) } \leq \text{diam}(A_{x^*}) \leq \left(
\left\|X_i^{-1}\right\|\,+\,\left\|X_j^{-1}\right\|\right) C_{x^*}.
\end{align}
First, $\text{diam}(A)\rightarrow\infty$, if $\text{dist}(x^*,\partial\mathcal{C}(X_d))\rightarrow 0$, since the normals of at least two neighboring boundaries of $A$ become (anti)parallel to each other.\\ 
Additionally we find, that for a fixed point of interest and angles between the local training points, the size of $\text{diam}(A)$ depends only on the norm distance of $\nabla f^*(x_i)$, $i=0,\dots,d$, which is encoded in the definition of $C_{x^*}$. The smaller the norm distance of the gradients of the sample points, the smaller gets $C_{x^*}$.\\
Lastly, we consider the case of $x^*$ on the boundary of the convex hull of the local training points. Then one selects a new set of local training points, such that $x^*$ is in the interior of their convex hull. In the application case of the neural entropy closure, the input data set is ${\mathcal{R}^r}$, which is bounded and convex. Thus, the argument is viable everywhere except at the boundary of the convex hull of all training data, assuming a suitable distribution of training data points. Remark, that the polyhedron can be shrunken by including more training points to the set $X_d$.
\subsection{Sampling of the normalized realizable set}\label{sec_sampling}
As a consequence of the above considerations, we generate the training data $X_T$ by sampling reduced Lagrange multipliers in a set
\begin{align}\label{eq_sampling_set}
    B_{M,\tau}=\menge{\alpha_u^r : \norm{\alpha_u^r}<M \cap \lambda_{\text{min}}(H(\alpha_u^n))>\tau}
\end{align}
using rejection sampling. 
Uniform distribution of $\alpha_u^r$ is important to achieve a uniform norm distance between the gradients of the approximated function $h$, which reduces the generalization gap. \\
The number generation method has an non negligible influence on the quality of training data, as Fig.~\ref{fig_dataDist}c) and e) display. The former are moments $u^r$ generated by a uniform grid sampling of $\alpha_u^r$, and the latter by uniform sampling of $\alpha_u^r$ using a low-discrepancy sampling method. The deformed grid in $u^r$ consists near $u^r_2=1.0$ of very steep triangles of local training points $X_d$, that means that a point of interest is always close to the boundary of $\mathcal{C}(X_d)$ which implies a big diameter for the polyhedron of admissible gradients $A_{x^*}$.
Low-discrepancy sampling methods have a positive impact for neural network training, especially for high data dimensions~\cite{mishra2020,LOYOLAR201675}. 
\section{Numerical Results} \label{sec_numResults}
In this section, we present numerical results  and  investigate the performance of the neural entropy closure. First, we compare the performance of neural networks trained on data using the sampling strategy discussed in Section~\ref{sec_data}.
We conduct synthetic tests to measure the performance of the networks on the complete realizable set and the computational efficiency in comparison with a Newton optimizer, which is used in typical kinetic solvers.
Then, we employ the network in a $1D$ and $2D$ kinetic solver and compare the results with the benchmark solution in several simulation test cases. To ensure significance of the errors compared to the spacial discretization errors, we perform a convergence analysis of the neural network based and benchmark solvers.
\subsection{Neural network training}
In the following we evaluate the training performance of the neural network architectures, which  are implemented in Keras using Tensorflow 2.6 ~\cite{tensorflow2015-whitepaper} and can be found in the Github repository~\cite{NeuralEntropy}. \\
The neural networks are trained on a subset of $\mathcal{R}^r$ that corresponds with Lagrange multipliers sampled from the set $B_{M,\tau}$ of Eq.~\eqref{eq_sampling_set}. The data sampler can be found in the Github repostiroy~\cite{KITRT}.
$M$ and $\tau$ are chosen such that the neural network training is numerically stable, since for high absolute values of  $\alpha^n_i$, the term $u^n=\inner{m\exp{(\alpha_u^nm)}}$ leads to a numerical overflow in single precision floating point accuracy, if the neural network training is not yet converged.
In this sense, the high condition number   of the minimal entropy closure near $\partial\mathcal{R}$ translates to the neural network approximation.
The sampled data is split into training and validation set, where the validation consists of $10$\% randomly drawn samples of the total data.
Table~\ref{tab_val_losses_cpl} compares the validation losses of different neural network architectures after the training process has converged. The layout of a neural network is defined in the format width $\times$ depth. Width describes the number of neurons in one layer of the network and depth the number of building blocks of the network. A building block of the input convex neural network is one (convex) dense layer. A building block of the monotonic neural network architecture is described by Eq.~\eqref{eq_mono_layer}. In addition to these layers, each model is equipped with a mean shift~\eqref{eq_meanshift} and decorrelation~\eqref{eq_decorrelation} layer followed by a dense layer as a preprocessing head. After the core architecture of the format width $\times$ depth, one more layer of the respective core architecture with half the specified width and finally the output layer follows. The linear output layer of the input convex neural network design is one dimensional, since we approximate the entropy $h_\theta$ and the linear output layer of the monotonic network design has dimension $N$, where $N$ is the order of the moment closure and the length of the reduced Lagrange multiplier vector $\alpha_u^r$. The input convex network with output data scaled to the interval $[0,1]$ uses a ReLU activation, since we do not expect negative output values.\\
The networks are trained on an Nvidia RTX 3090 GPU in single-precision floating-point accuracy. 
For each network architecture, we present the mean squared and mean absolute error for all quantities of interest averaged over the validation data set. For the monotonic network, the monotonicity loss is additionally displayed. The converged  mean squared error on the validation set is in $\landauO(10^{-4})$ to $\landauO(10^{-6})$. These errors are in line with the findings of similar approaches, see~\cite{porteous2021datadriven}. In~\cite{adcock2021gap} the authors have found, that it is hard to train neural networks below $4$ digits of accuracy in single precision training. Further studies need to be conducted about the performance in double precision training of the proposed networks.
\begin{table}[htbp]
\centering
  	\caption{Validation losses for different moment closure} 
	\begin{tabular}{|l|l|l|l|l|l|l|} 
		\hline
		 Closure &\multicolumn{2}{c|}{$M_1$ $1$D}  &\multicolumn{2}{c|}{$M_2$ $1$D}   &\multicolumn{2}{c|}{$M_1$ $2$D}    \\ \hline
		 Architecture & convex & monotone & convex & monotone & convex & monotone \\ \hline
		 Layout                & $10\times 7$          & $30\times 2$ & $15\times 7$ & $50\times 2$ & $18\times 8$&  $100\times 3$ \\ \hline
		 MSE$(h^n,h^n_\theta)$ & $7.87\mathrm{e}{-7}$  & $2.09\mathrm{e}{-5}$ &  $1.33\mathrm{e}{-5}$&$5.04\mathrm{e}{-4}$ & $1.10\mathrm{e}{-6}$ & $4.01\mathrm{e}{-4}$ \\ \hline
		 MSE$(\alpha^r_u,\alpha^r_\theta)$ & $7.52\mathrm{e}{-4}$ &  $5.56\mathrm{e}{-6}$ & $2.81\mathrm{e}{-4}$ & $2.56\mathrm{e}{-4}$ &$3.39\mathrm{e}{-5}$  &$4.54\mathrm{e}{-5}$ \\ \hline
		 MSE$(u^r,u^r_\theta)$& $1.47\mathrm{e}{-6}$ &  $3.64\mathrm{e}{-6}$ & $2.81\mathrm{e}{-4}$& $1.27\mathrm{e}{-4}$&  $3.39\mathrm{e}{-5}$& $8.09\mathrm{e}{-5}$\\ \hline
		 $L_\text{mono}(u^r)$ & n.a. & $1.60\mathrm{e}{-14}$ & n.a. &$1.38\mathrm{e}{-14}$ & n.a. &$5.31\mathrm{e}{-16}$ \\ \hline
		 MAE$(h^n,h^n_\theta)$ & $7.57\mathrm{e}{-4}$ & $3.05\mathrm{e}{-3}$ & $3.11\mathrm{e}{-3}$& $1.66\mathrm{e}{-2}$ & $1.02\mathrm{e}{-3}$  &$1.49\mathrm{e}{-2}$\\ \hline
		 MAE$(\alpha_u^r,\alpha^r_\theta)$ & $1.26\mathrm{e}{-2}$  &  $9.10\mathrm{e}{-3}$ & $1.23\mathrm{e}{-2}$  &  $9.74\mathrm{e}{-3}$& $3.36\mathrm{e}{-3}$ & $4.35\mathrm{e}{-3}$\\ \hline
		 MAE$(u^r,u^r_\theta)$ & $9.59\mathrm{e}{-4}$ &  $1.51\mathrm{e}{-3}$  &$1.23\mathrm{e}{-2}$ & $7.96\mathrm{e}{-3}$ & $9.62\mathrm{e}{-4}$ &$7.02\mathrm{e}{-3}$  \\ \hline
	\end{tabular}  \label{tab_val_losses_cpl}
\end{table}
Notice, that the mean absolute error in $\alpha_u^r$ is significantly higher than the error in $h$ or $u^r$ for all input convex neural networks. The reason for this is again the high range of values, that $\alpha_u^r$ can attain. In case of the input convex neural network, the values are obtained by differentiating through the network with primary output $h_{\theta}$, and thus one always has a scaling difference between $h^n$ and $\alpha_u^r$ of about one order of magnitude. Therefor, the scaling parameter $\lambda$ of Eq.~\eqref{eq_loss_icnn} is set to be $\lambda=1/10$ to balance out the training.
\subsection{Synthetic test cases}
\begin{figure}
    \begin{minipage}{0.49\textwidth}
      \centering
       \includegraphics[width=\textwidth]{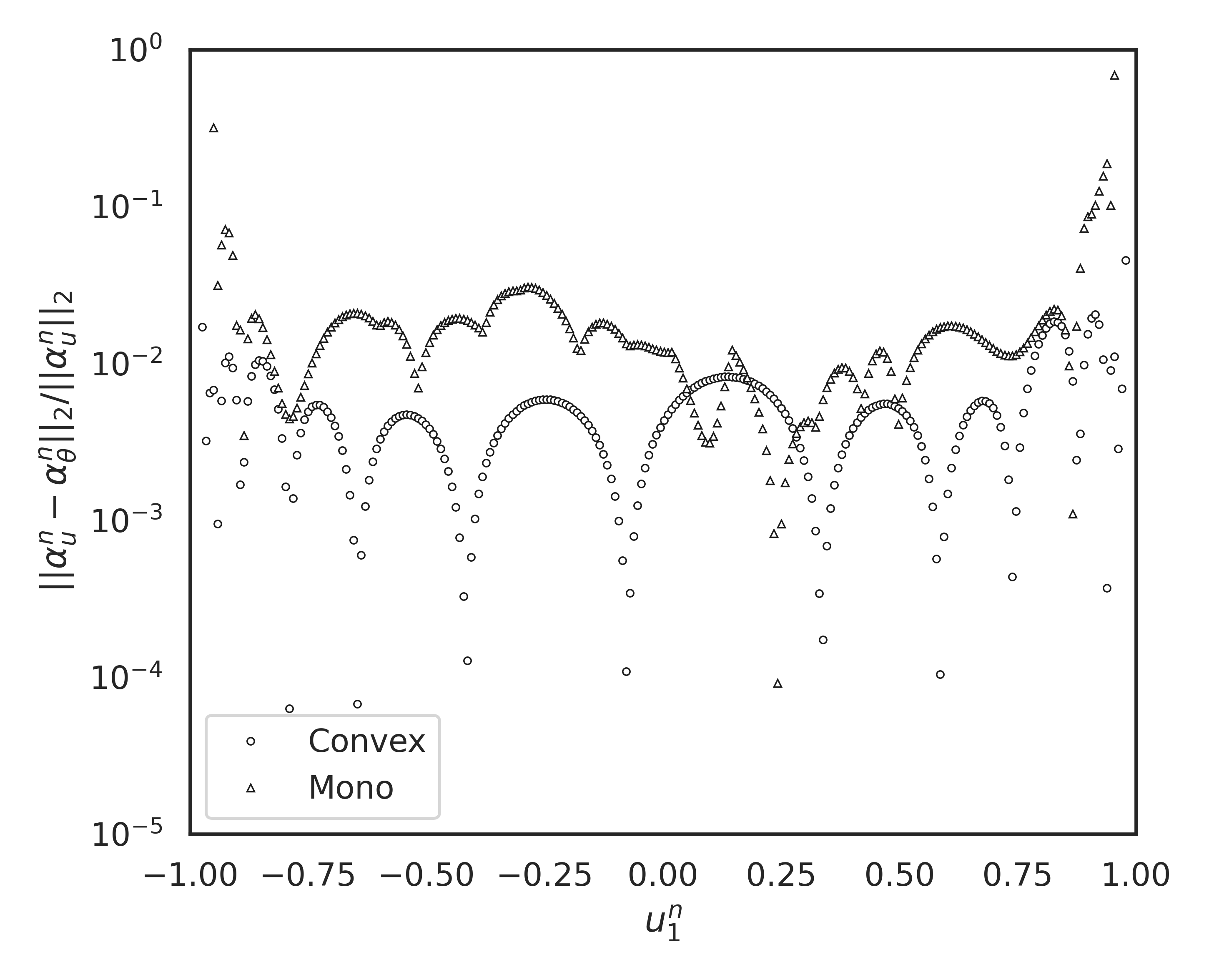}
            {\\a) Relative norm error of the prediction of $\alpha_u^n$ }
    \end{minipage}
    \begin{minipage}{0.49\textwidth}
      \centering
       \includegraphics[width=\textwidth]{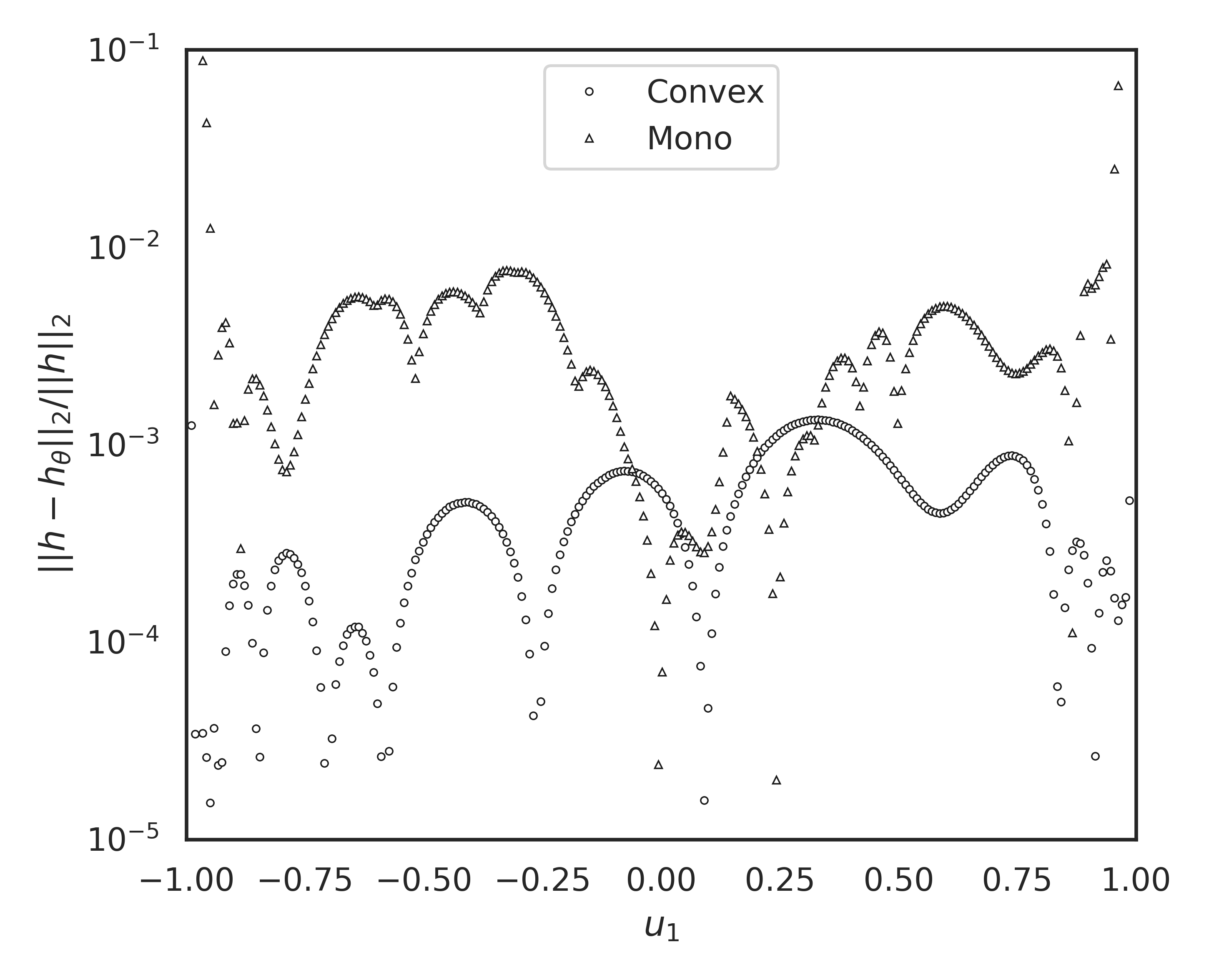}
            {\\b)  Relative norm error of the prediction of $h$}
    \end{minipage}\centering
     \begin{minipage}{0.49\textwidth}
      \centering
       \includegraphics[width=\textwidth]{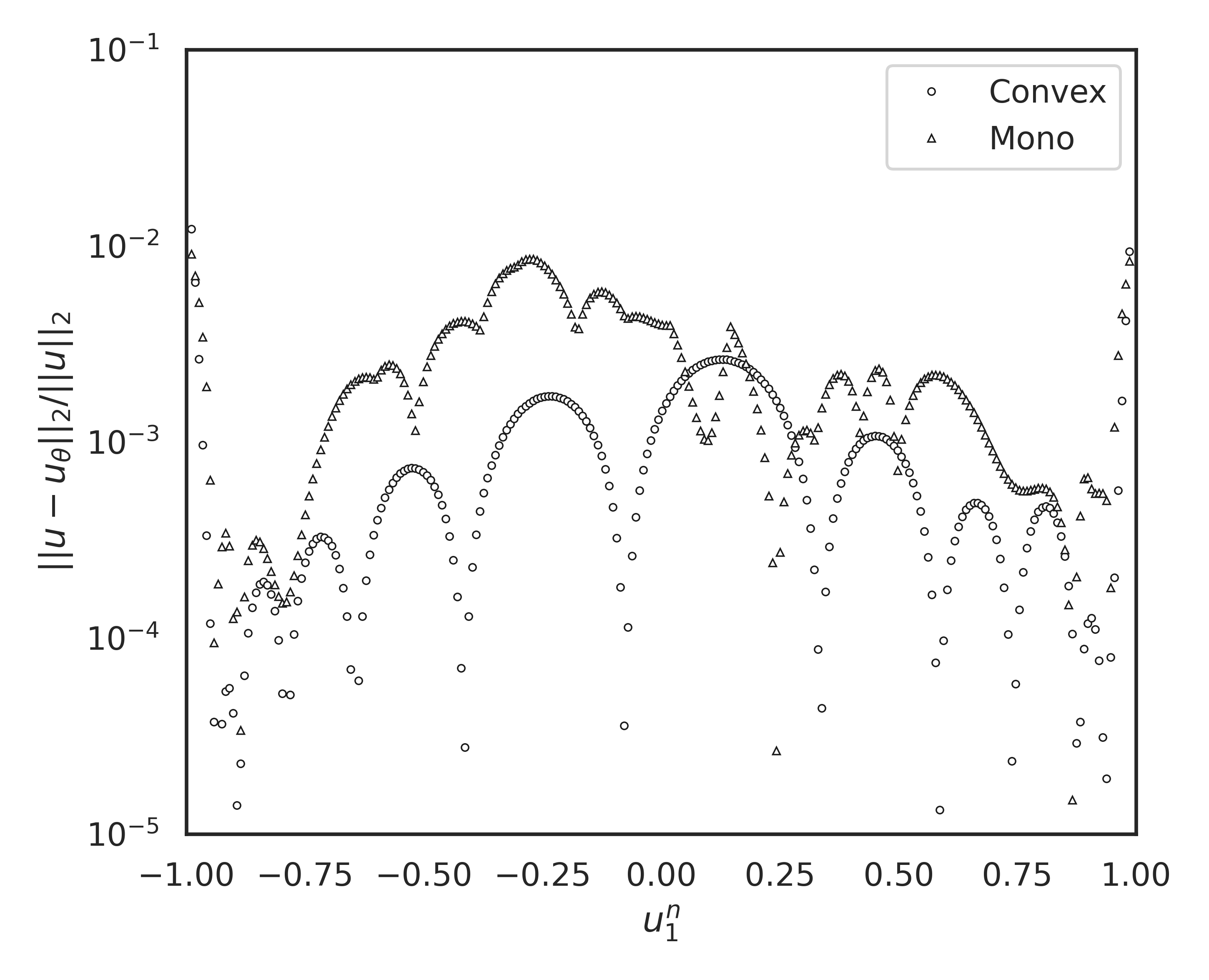}
            {\\c) Relative norm error of the reconstruction of $u^n$}
    \end{minipage}
    \caption{Relative test errors of both neural network architectures for the $1D$ $M_1$ closure. Distance to $\partial\mathcal{R}$ is $0.01$ }
    \label{fig_dataSampling1D}
\end{figure}
In this section, we consider again the $1D$ $M_1$ entropy closure, see Fig.~\ref{fig_m1_1D_closure}, and perform accuracy tests for the input convex and monotnonic neural network architecture.
The networks are trained on a the data set generated from $\alpha^r_{u,1}$ sampled from $[-50,50]$ using the discussed sampling strategy. Then, 
the networks are evaluated on twice as many samples in the displayed data range $u^n_1\in[-0.99,0.99]$ and $\alpha^n_{u,i}\in[-95,80]$, thus the extrapolation areas near the boundary consist of only unseen data and the interpolation area contains at least 50$\%$ unseen data.\\
The relative norm errors of the predictions of both network architectures can be seen in Fig.~\ref{fig_dataSampling1D}. 
Figure~\ref{fig_dataSampling1D}a) compares the input convex and monotonic network on the basis of their relative norm error in the Lagrange multiplier $\alpha^r_u$. Within the intervall $[-0.75,0.75]$ the relative error of the input convex neural network is in $\landauO(10^{-2.5})$ and increases by half an order of magnitude in the extrapolation area. The relative error of the monotonic architecture displays more fluctuation with a mean of $\landauO(10^{-2})$. In the extrapolation area, the error of the monotonic network increases by over an order of magnitude and is outperformed by the convex neural network.  
Remark, that the approximation quality declines as we approach $\partial\mathcal{R}^r$, which is expected, since the neural networks can not be trained close to the boundary and the output data $\alpha_u^r$ and $h$ grow rapidly in this region. \\
Figure~\ref{fig_dataSampling1D}b) displays the relative error in the entropy prediction $h_\theta$ for of the respective neural networks. 
The monotonic architecture exhibits a larger relative error in $h$, compared to the input convex architecture. This can by explained by the fact, that the input convex neural network directly approximates $h$, whereas the monotonic neural network reconstructs $h$ using $\alpha^r_\theta$ and $u_\theta$ and thus the approximation error of both variables influence the error in $h$.
In the extrapolation regime, one can see a similar error increase as in Fig.~\ref{fig_dataSampling1D}a). \\
Overall, both networks do not perform well near $\partial\mathcal{R}^r$, however, when we consult Fig.~\ref{fig_dataSampling1D}c), we see that the error in the reconstructed moment $u^n$ is below $10^{-2}$ for the input convex and the monotonic network, although the error in $\alpha^n_u$ is almost in the order of $10^0$ in this region. This shows, that the nature of the reconstruction map $u=\inner{m \exp(\alpha_u m)}$ mitigates the inaccuracy in $\alpha^n_u$ to some degree. The reconstructed moments $u^n_\theta$ experience less relative error in the interior of $\mathcal{R}^r$ than near the boundary.
For the stability of the solver however, the error in the reconstructed flux $\inner{vmf_u}$, which is Lipschitz continuous in $u$, is the most important quantity.\\
All in all, both network architecture are able to approximate the entropy closure within a reasonable error margin.
\subsection{Computational Efficiency}
In the following, we compare the computational efficiency of the neural network surrogate model and the Newton optimizer in an isolated, synthetic test case. We consider the $M_2$ closure in $1D$ and use only normalized moments. In contrast to the neural network, the performance of the Newton solver is dependent on the proximity of the moments $u^n$ to the boundary $\partial{\mathcal{R}^n}$, thus we consider three test cases. First, the moments are uniformly sampled in $\mathcal{R}^n$, second we only draw moments near the center of $\mathcal{R}^n$ and lastly, we use only moments in proximity to   $\partial{\mathcal{R}^n}$, where the minimal entropy problem is hard to solve and has a high condition number. The Newton solver is implemented in the KiT-RT~\cite{KITRT} framework. In the kinetic scheme, there is one optimization problem for each grid cell in a given time step. Furthermore, optimization problems of different grid cells are independent of each other.
Options for the parallelization of the minimal entropy closure within a kinetic solver differs, whether we choose an CPU based implementation of a Newton solver or the GPU optimized tensorflow backend for the neural network closure. 
A meaningful and straight-forward way to parallelize the minimal entropy closure on CPU is to employ one instance of the Newton optimizer per available CPU-core that handles a batch of cells. On the other hand, we interpret the number of grid cells as the batch size of the neural network based entropy closure. Shared memory parallelization is carried out by the tensorflow backend. 
For comparability, we set the accuracy tolerance of the Newton solver to single-precision floating point accuracy, since the trained neural networks have a validation error between $\landauO(10^{-4})$ and $\landauO(10^{-6})$. \\
We execute the neural network once on the CPU and once on  the GPU using direct model execution in tensorflow. The used CPU is a $24$ thread AMD Ryzen9 3900x with 32GB memory and the GPU is a RTX3090 with 20GB memory. The experiments are reiterated $100$ times to reduce time measurement fluctuations. Table~\ref{tab_timingbenchmark} displays the mean timing for each configuration and corresponding standard deviation. \\
\begin{table}[htbp]
	\caption{Computational cost for one iteration of the 1D solver in seconds} 
	\centering
	\begin{tabular}{|l|l|l|l|} 
		\hline
		 & Newton & neural closure CPU & neural closure GPU \\
		\hline 
		uniform, $10^3$ samples & $0.00648\pm 0.00117$ s & $0.00788\pm 0.00051$ s & $0.00988\pm 0.00476$ s \\ 
		\hline 
		uniform, $10^7$ samples & $5.01239\pm 0.01491$ s & $0.63321\pm 0.00891$ s  & $0.03909\pm 0.00382$ s \\
		\hline
		boundary, $10^3$ samples & $38.35292\pm  0.07901$ s & $0.00802\pm 0.00064$ s & $0.00974\pm 0.00475$ s\\
		\hline 
		boundary, $10^7$ samples &  $27179.51012\pm 133.393$  s & $0.63299\pm 0.00853$ s  & $0.03881\pm 0.00352$ s  \\  
		\hline
		interior, $10^3$ samples & $ 0.00514\pm 0.00121$ s & $0.00875\pm 0.00875$ s & $0.00956\pm  0.00486$ s\\
		\hline 
		interior, $10^7$ samples & $4.24611\pm 0.03862$ s & $0.63409\pm 0.00867$ s  & $0.03846 \pm 0.00357$ s\\
		\hline
	\end{tabular} 
	\label{tab_timingbenchmark}
\end{table}
Considering Table~\ref{tab_timingbenchmark}, we see that the time consumption of a neural network is indeed independent of the condition of the optimization problem, whereas the Newton solver is $6300$ times slower on a on a moment $u^n$ with dist$(u^n,\partial{\mathcal{R}^n}) =0.01$ compared to a moment in the interior. The average time to compute the Lagrange multiplier of a uniformly sampled moment $u^n$ is $27$\% higher than a moment of the interior.
Reason for this is, that the Newton optimizer needs more iterations, the more ill-conditioned the optimization problem is. In each iteration, the inverse of the Hessian must be evaluated and the integral $\inner{\cdot}$ must be computed using a $30$ point Gauss-Legendre quadrature. One needs a comparatively high amount of quadrature points, since the integrand $m\times m \exp(\alpha^n\cdot m)$ is highly nonlinear. 
The neural network evaluation time is independent of the input data by construction and depends only on the neural network architecture and its size. Here we evaluate the input convex neural network for the $1D$ $M_2$ closure, whose size is determined by Table~\ref{tab_val_losses_cpl}. The timings for the other networks are similar, since they do not differ enough in size. 
However, we need to take into account that the neural entropy closure is less accurate near $\partial\overline{\mathcal{R}}$ as shown in Fig.~\ref{fig_dataSampling1D}.
Furthermore, we see that the acceleration gained by usage of the  neural network surrogate model is higher in cases with more sampling data. This is apparent in the uniform and interior sampling test cases, where the computational time increases by a factor of $\approx 73$, when the data size increases by a factor of $10^4$. The time consumption of the Newton solver increases by a factor of $\approx840$ in the interior sampling case, respectively $\approx 782$ in the uniform sampling case. Note, that in this experiment, all data points fit into the memory of the GPU, so it can more efficiently perform SIMD parallelization.
Reason for the smaller speedup of the neural network in case of the smaller dataset is the higher communication overhead of the parallelization relative to the workload. This indicates that the best application case for the neural network is a very large scale simulation.
\begin{table}[htbp]
    \centering
	\caption{Computational setup of the test cases} 
	\begin{tabular}{|l|l|l|l|l|} 
		\hline
		&1D M1 & 1D M2 & 2D M1 \\
		\hline
		$T$ & $(0,0.7]$ & $(0,0.7]$ & $(0,10]$ \\	\hline
		Time steps  & $8750$ & $8750$ & $33333$\\ 	\hline
		$\mathbf X$ & $[0,1]$ & $[0,1]$& $[-1.5,1.5]\times[-1.5,1.5]$ \\	\hline
		Grid cells $n_x$&$5000$ & $5000$ & $2000^2$ \\	\hline
		Quadrature &  Gauss-Legendre & Gauss-Legendre & Tensorized Gauss-Legendre\\	\hline
		$\mathbf{V}$ &  $[-1,1]$ & $[-1,1]$ & $[-1,1]\times[0,2\pi)$ \\ 	\hline
		Quadrature points & $28$ & $28$ & $400$\\	\hline
		Basis & Monomial &  Monomial & Monomial\\ 	\hline
		CFL number & $0.4$ & $0.4$ &  $0.4$ \\ 	\hline
		$\sigma$  & $1.0$ & $1.0$ & $0.0$\\ 	\hline
		$\tau$ & $0.5$ & $0.5$ & $0.0$\\ 	\hline
	\end{tabular} 
	\label{tab_2D_setup}
\end{table}
\subsection{An-isotropic inflow into an isotropically scattering, homogeneous medium in 1D}
Let us first study the particle transport in an isotropic scattering medium.
We consider the one-dimensional geometry, where the linear Boltzmann equation reduces to
\begin{align}
    \partial_t f+\mathbf v \partial_{\mathbf{x}} f&=Q(f)- \tau f\\
    &=\sigma\int_{-1}^1 \frac{1}{2}\left( f(\mathbf v_*) - f(\mathbf v)\right)\intD \mathbf v_* - \tau f,
\end{align}
where $\sigma$ is a scattering coefficient and $\tau$ is an absorbtion coefficient. The corresponding moment model becomes
\begin{align}
  \partial_t u + \partial_{\mathbf{\mathbf x}} \inner{\mathbf v m f_u}&= \sigma\inner{m Q(f_u)}-\tau u\\
      f_u &= \eta_*(\alpha_\theta\cdot m)
\end{align}
The initial condition of the computational domain is set as vacuum with $f(0,\mathbf{x},\mathbf v)=\epsilon$, where $1 \gg\epsilon > 0$ is a safety treshold, since the normalized vector $u^n$ is undefined for $u_0=0$ and $0\in\partial\mathcal{R}$.
An an-isotropic inflow condition is imposed at the left boundary of domain with
\begin{align}
    f(t>0,x=0,v) =  \begin{cases}
      0.5 & \text{if $v>0$}\\
      0 & \text{if $v\leq0$},
    \end{cases} 
\end{align}
and the right hand side boundary is equipped with a farfield condition.
The domain is resolved using a structured grid in space using a kinetic upwind scheme~\cite{KRISTOPHERGARRETT2015573} and an explicit Euler scheme in time. The benchmarking solver uses a Newton based optimizer with linesearch to compute the minimal entropy closure, and the neural network based solver uses the neural network prediction to compute the kinetic flux. The Newton based optimizer is set to single precision accuracy. The CFL number is set to $0.4$ to avoid that the finite volume update steps outside the realizable domain $\mathcal{R}$,~\cite{OLBRANT20125612}.
The detailed computational setup can be found in Table \ref{tab_2D_setup}.
The solution profiles at final time $t_f=0.7$ of the neural network based entropy closed moment system and the reference solver are presented in Fig.~\ref{fig_1D_closures} for the $M_1$ and $M_2$ system. 
We can see that the systems dynamics are well captured by both neural network architectures. \\
In order to verify the significance of the following error discussion, we conduct a convergence analysis of both test cases with the used finite volume solver. 
Figure.~\ref{fig_1D_convergence_plot}a) compares the convergence of the solution of both neural network entropy closure and Newton closed solver of the $1D$ $M1$ test case and Fig.~\ref{fig_1D_convergence_plot}b) the corresponding solutions of the  $2D$ $M1$ test case. We assume the solution of the Newton solver at final time $t_f$ with the finest grid as the ground truth. Due to a fixed CFL number, the amount of time steps needed for each simulation is proportional to the number of used grid cells. The plots display first order convergence for the Newton based solver as expected.
We can see in Fig.~\ref{fig_1D_convergence_plot}a), that the monotonic neural network in the $1D$ $M1$ inflow test case converges with first order accuracy up to an error level of $\landauO(10^{-2.5})$. For finer grid resolutions, the error in the neural network based closure dominates the spatial discretization error. The input convex neural network exhibits similar behavior, but the error plateau is reached at $\landauO(10^{-3.5})$. \\
In Fig.~\ref{fig_1D_closures_err}, we see the corresponding  norm errors of the $M_1$ and $M_2$ solution for each grid cell at final time $t_f$. The point wise norm error is again in the range of $\landauO(10^{-3.5})$ in case of the input convex architecture and in the range of $\landauO(10^{-2.5})$ in case of the monotonic  network architecture in the $M_1$ test case. In the $M_2$ test case, the errors do not exceed $\landauO(10^{-2})$. 
An inspection of the relative errors of these test cases is given in Fig.~\ref{fig_1D_closures_rel_err}. One can spot the maximal relative error in both test cases  at  $x\in(0.7,0.8)$ at final time $t_f$. The wave front is located  in this area  in the an-isotropic inflow simulation and the moments $u$ are closest to the boundary of the realizable set $\partial\mathcal{R}$.
\begin{figure}
    \centering
    \begin{minipage}{0.495\textwidth}
        \includegraphics[width=\textwidth]{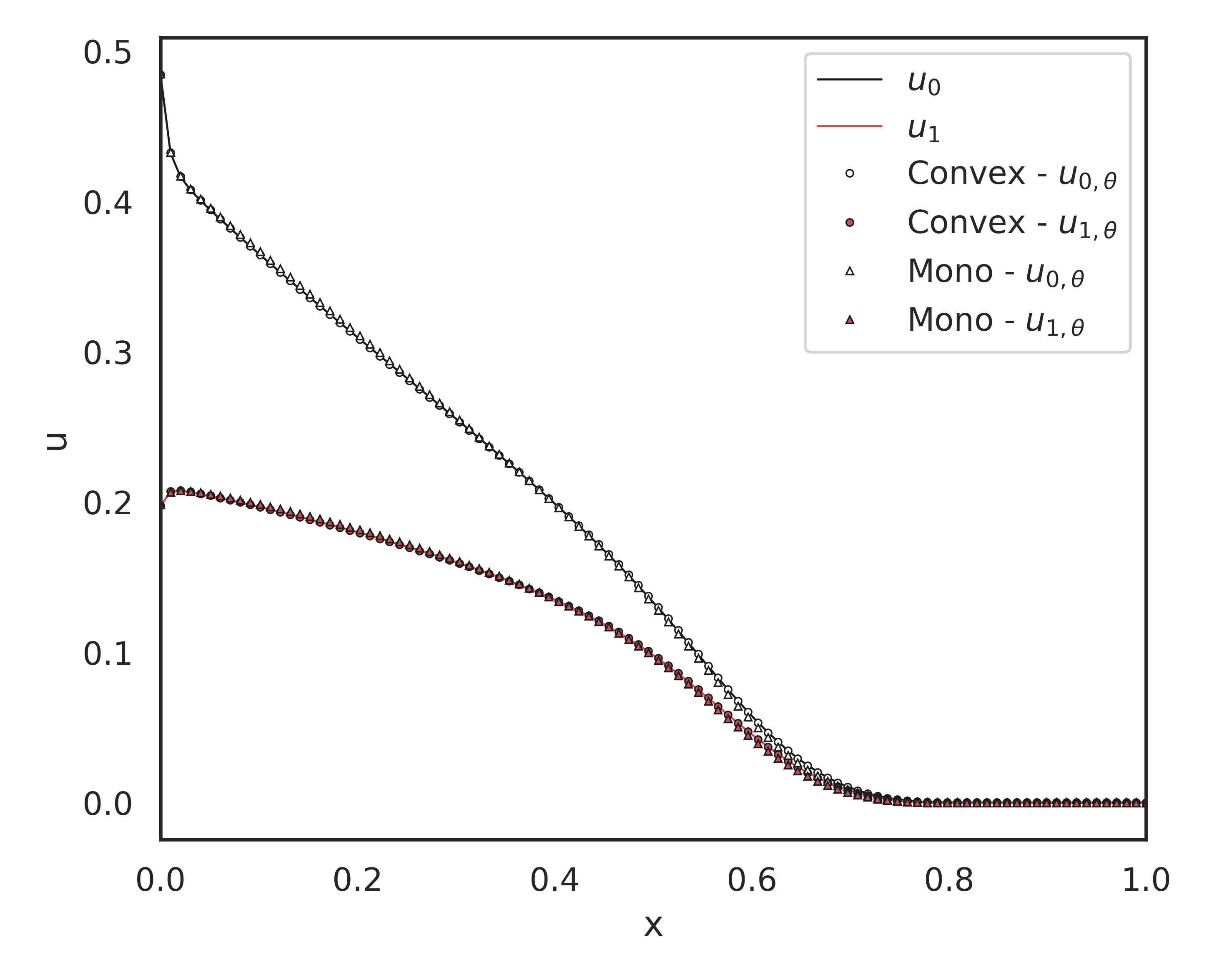}
        \centering{\\a) $1D$ $M_1$ closure}
   \end{minipage}
   \begin{minipage}{0.495\textwidth}
       \includegraphics[width=\textwidth]{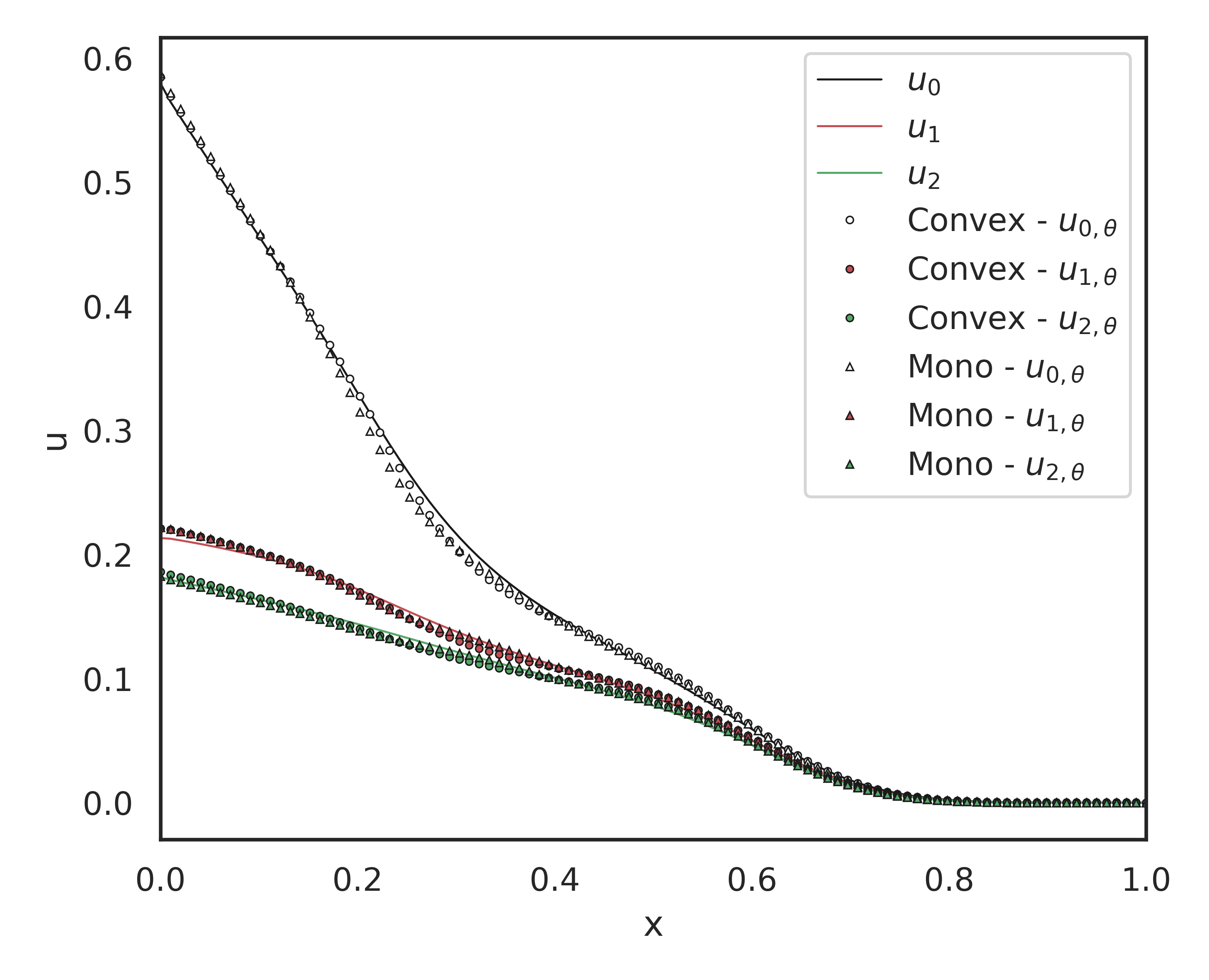}
        \centering{\\b)  $1D$ $M_2$ closure}
   \end{minipage}
   \caption{Moments of the solution of the an-isotropic inflow test case at $t_f=0.7$. Comparison of the benchmark solution, the input convex closure and the monotonic closure.}
   \label{fig_1D_closures}
\end{figure}
\begin{figure}
   \begin{minipage}{0.495\textwidth}
       \includegraphics[width=\textwidth]{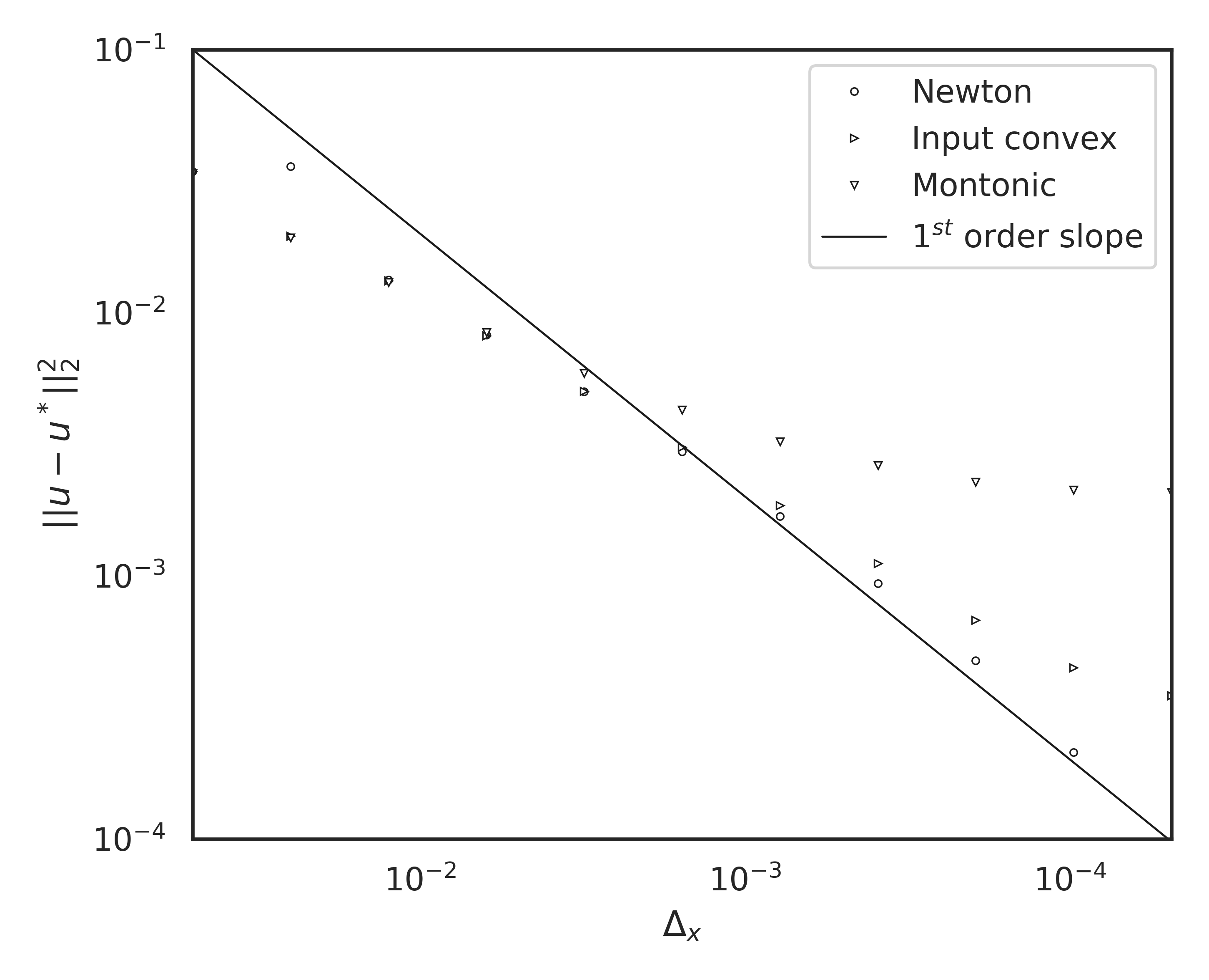}
       \centering{\\a) $1D$  $M_1$ closure}
    \end{minipage}
    \begin{minipage}{0.495\textwidth}
       \includegraphics[width=\textwidth]{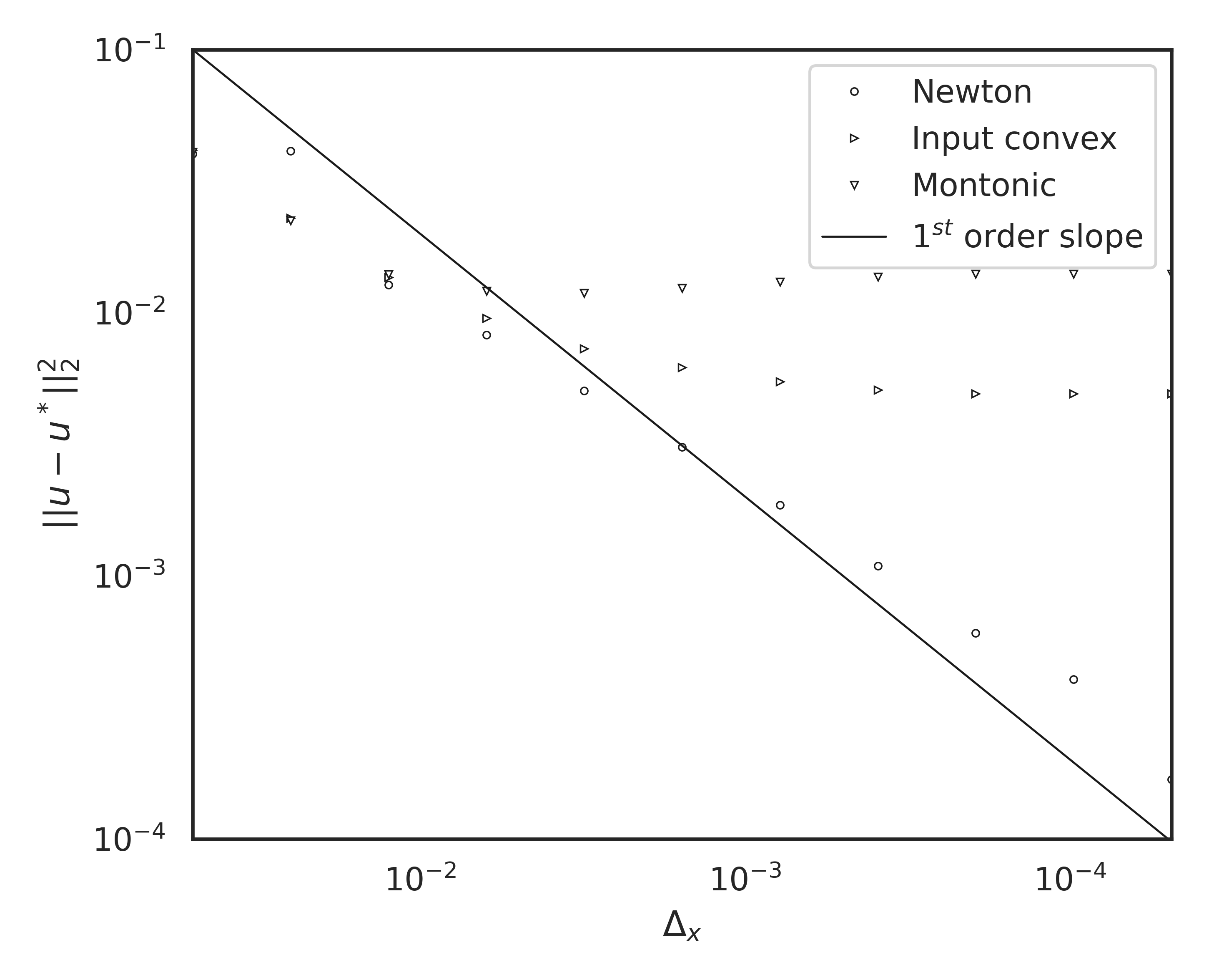}
      \centering{\\b) $1D$  $M_2$ closure}
    \end{minipage}
   \caption{Comparison of the convergence rate of a first order finite volume solver with different closures. The ground truth $u^*$ is given by the finest Newton based solution. The spatial cell size is denoted by $\Delta_x$.}
   \label{fig_1D_convergence_plot}
\end{figure}
\begin{figure}
   \begin{minipage}{0.495\textwidth}
       \includegraphics[width=\textwidth]{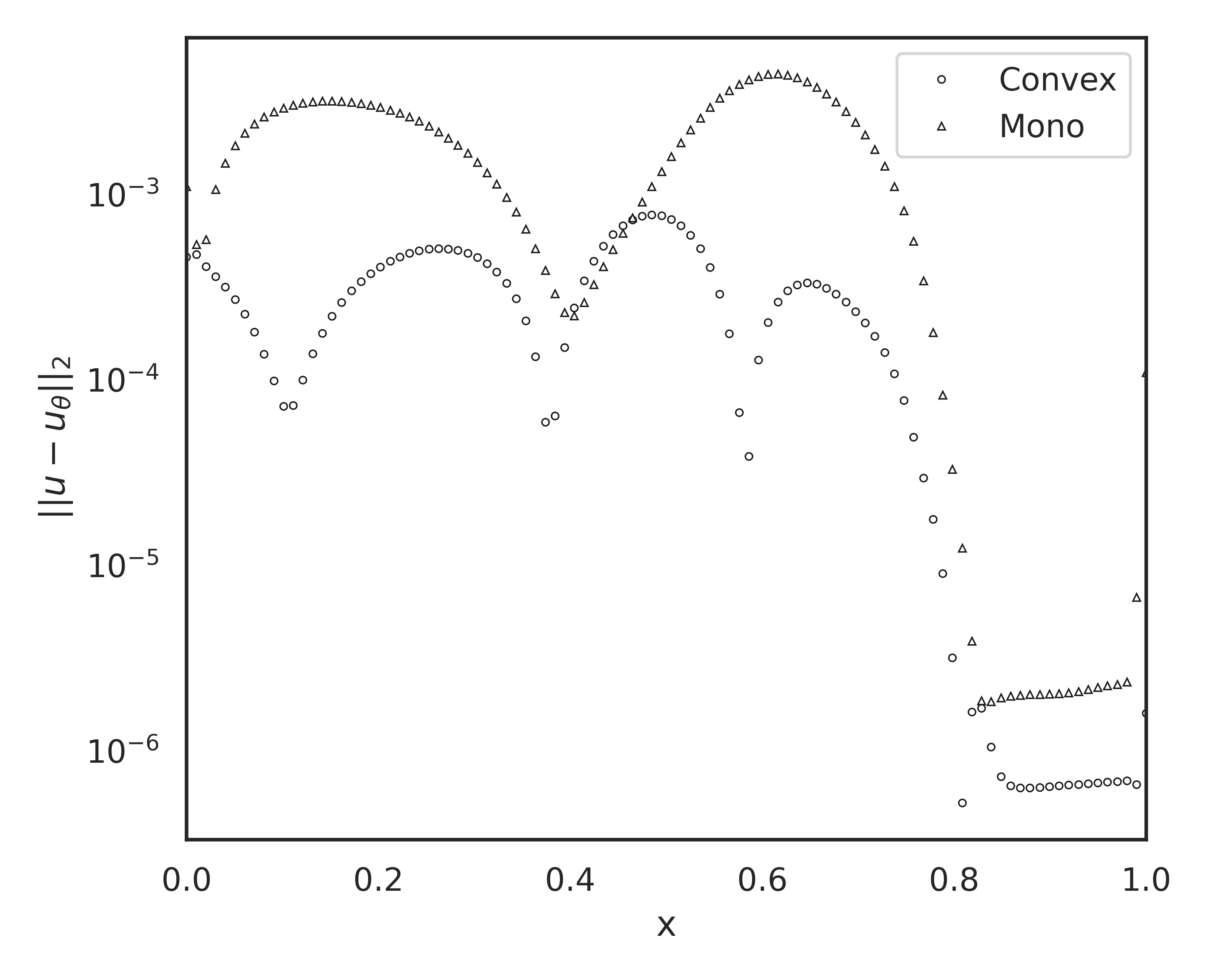}
       \centering{\\a)  $1D$ $M_1$ closure}
    \end{minipage}
    \begin{minipage}{0.495\textwidth}
       \includegraphics[width=\textwidth]{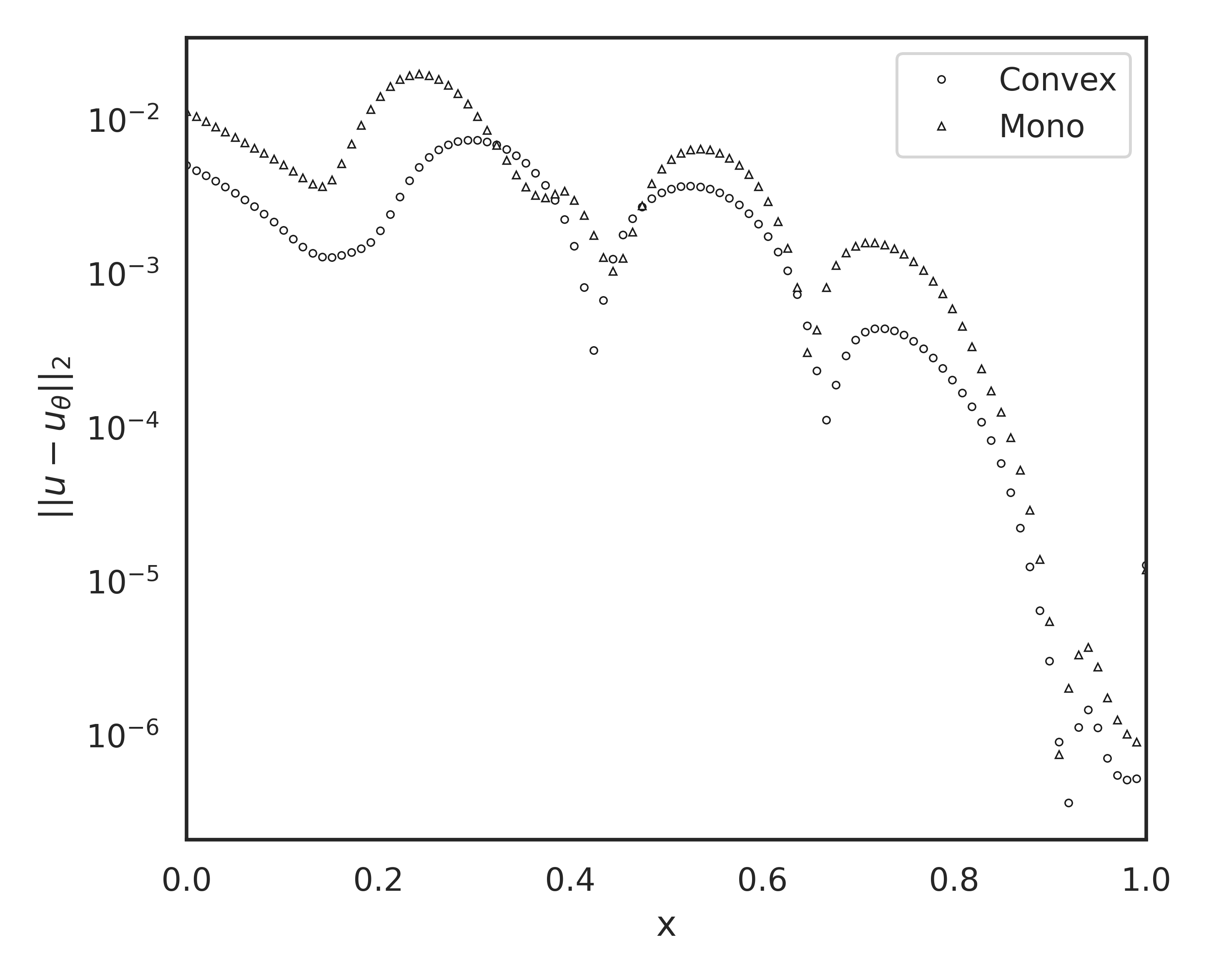}
      \centering{\\b) $1D$  $M_2$ closure}
    \end{minipage}
   \caption{Norm error at individual grid points of the neural network based closure with respect to the benchmark solution at time $t_f=0.7$. For readability, not every grid cell is displayed.}
   \label{fig_1D_closures_err}
\end{figure}
\begin{figure}
   \begin{minipage}{0.495\textwidth}
       \includegraphics[width=\textwidth]{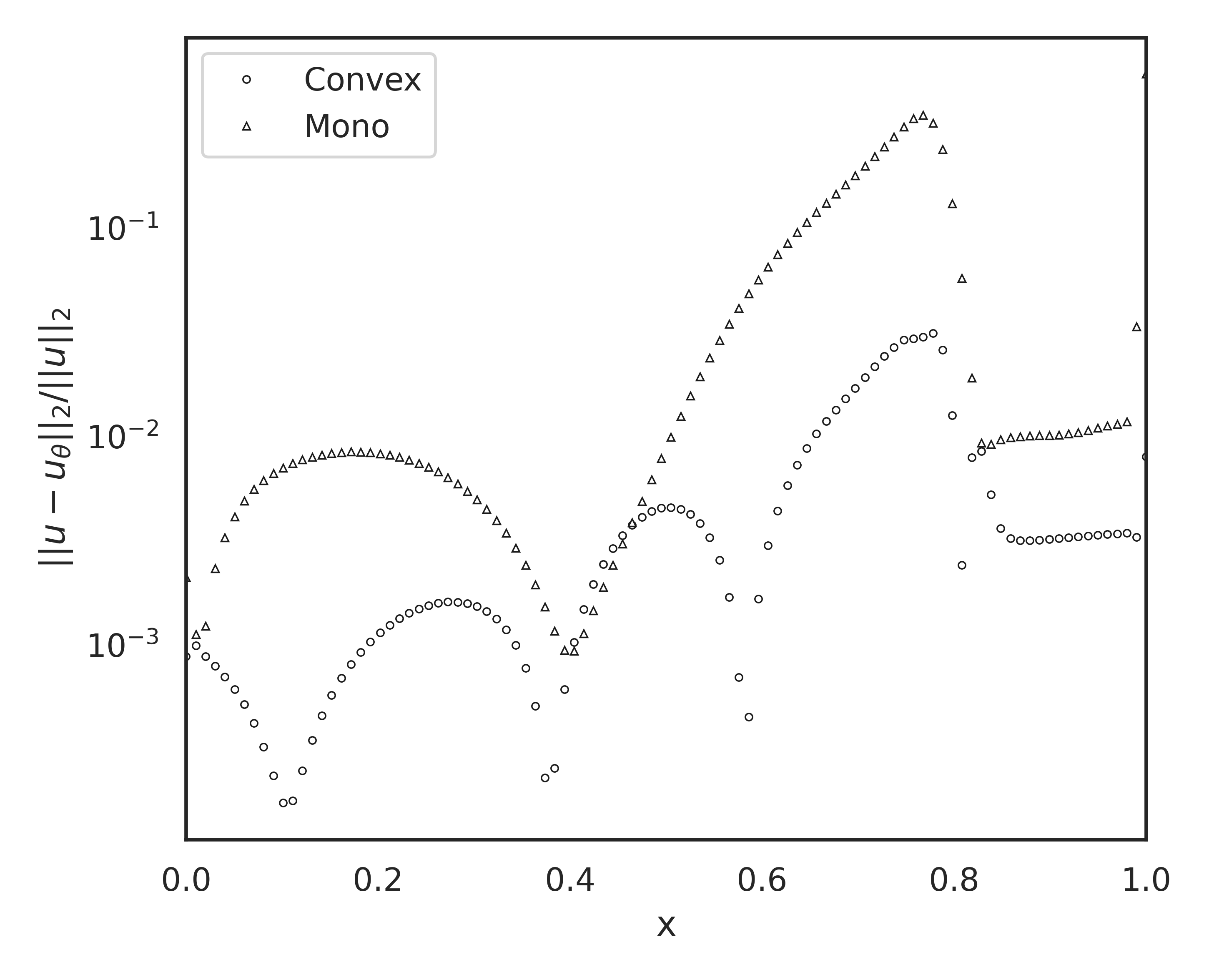}
       \centering{\\a)  $1D$ $M_1$ closure}
    \end{minipage}
    \begin{minipage}{0.495\textwidth}
       \includegraphics[width=\textwidth]{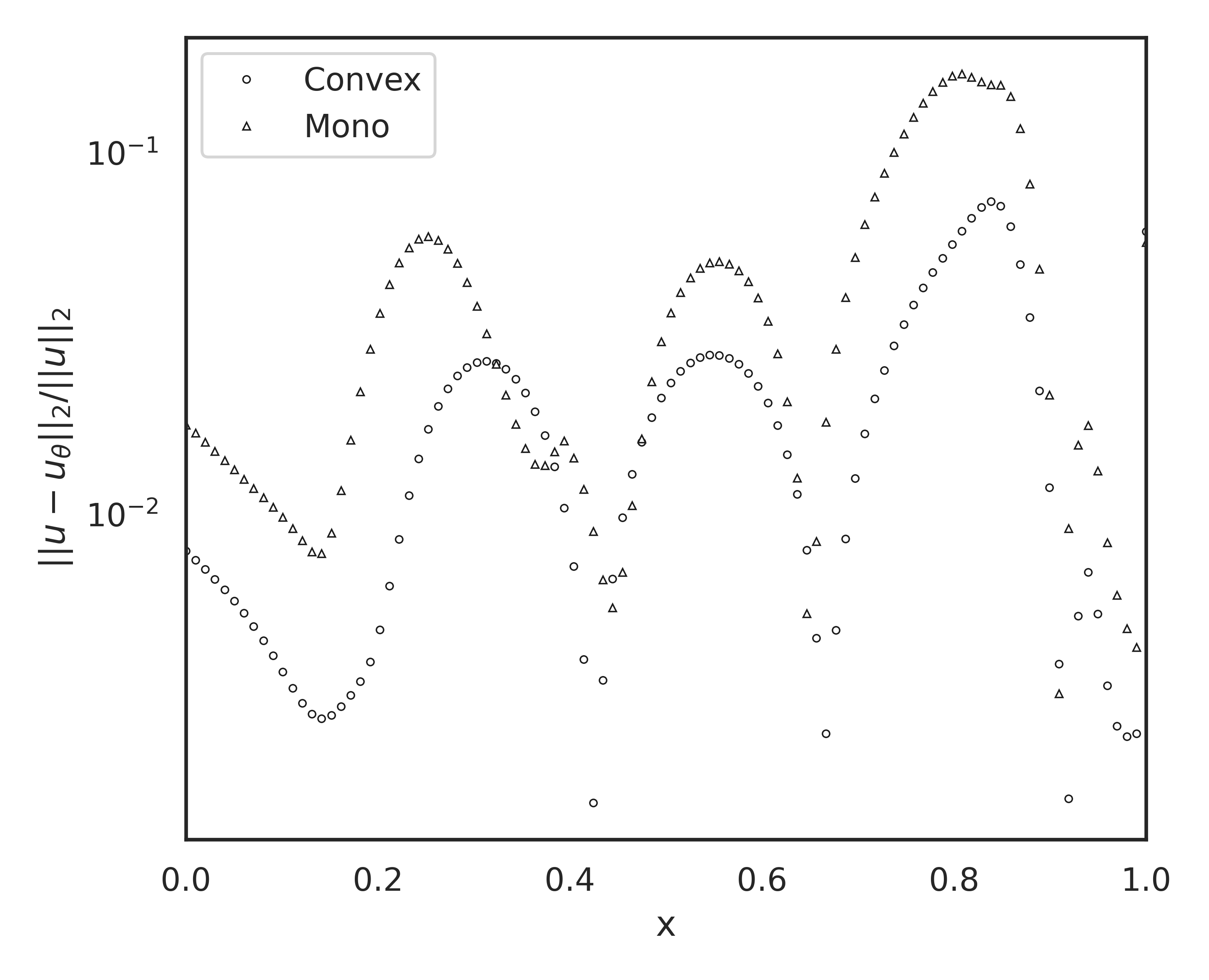}
       \centering{\\b)  $1D$ $M_2$ closure}
    \end{minipage}
   \caption{Relative norm error at individual grid points of the neural network based closure with respect to the benchmark solution at time $t_f=0.7$. For readability, not every grid cell is displayed}
   \label{fig_1D_closures_rel_err}
\end{figure}

\subsection{Particles in a 2D non scattering medium with periodic initial conditions}
We consider a rectangular domain in two spatial dimensions. The phase space of the Boltzmann equation is thus five dimensional, where $\mathbf{X}=[-1.5,1.5]^2$, $\mathbf{V}=\menge{\mathbf v\in\mathbb{R}^2:\norm{\mathbf v}_2 < 1}$ and $t>0$. We consider the $M_1$ closure with a monomial basis $m(\mathbf v)=[1,\mathbf v_{\mathbf x},\mathbf v_{\mathbf y}]^T$.
The velocity domain $\mathbf V$ is parametrized in spherical coordinates
\begin{align}
       \left(\begin{array}{c} \mathbf v_{\mathbf x} \\ \mathbf v_{\mathbf y} \end{array}\right)= \left(\begin{array}{c} \sqrt{1-\mu^2}\cos(\phi) \\ \sqrt{1-\mu^2}\sin(\phi) \end{array}\right), \qquad (\mu,\phi)\in [-1,1]\times [0,2\pi).
\end{align}
This test case considers a non scattering and non absorbing medium, i.e. $\sigma=\tau=0$, and the Boltzmann equation reduces to a transport equation of the form
\begin{align}
\begin{aligned}
    \partial_t f + \mathbf v_{\mathbf x}\partial_{\mathbf x} f + \mathbf v_{\mathbf y}\partial_{\mathbf y} f = 0.
\end{aligned}
\end{align}
The corresponding moment system with minimal entropy closure reads 
\begin{align}
\begin{aligned}
  \partial_t u +\partial_{\mathbf x}\inner{\mathbf v_{\mathbf x} mf_u}+\partial_{\mathbf y}\inner{\mathbf v_{\mathbf y} mf_u} &= 0 \\
    f_u &= \eta_*(\alpha_u\cdot m)
\end{aligned}
\end{align}
The Boltzmann equation is equipped with periodic initial conditions that translate to the $M_1$ moment equations
\begin{align}
    u_0 &= 1.5 + \cos(2\pi \mathbf x)\cos(2\pi \mathbf y), \qquad (\mathbf x,\mathbf y)\in\mathbf{X},\\
    u_1 &= 0.3 u_0, \\
    u_2 &= 0.3 u_0.
\end{align}
Periodic boundary conditions are imposed on the equations to get a well posed system of equations. 
Note that due to the absent of gain and loss terms and the choice of boundary conditions, the system is closed and cannot lose or gain particles. 
The $M_1$ system is solved using again a kinetic scheme with a $2D$ finite volume method in space, an explicit Euler scheme in time and a tensorized $2D$ Gauss-Legendre quadrature to compute the velocity integrals. The detailed solver configuration can be found in Table~\ref{tab_2D_setup}. Analogously to the $1D$ test cases, we compare the Newton based benchmark solution to the neural network based closures with the input convex and monotonic architectures. 
We run the simulation until a final time $t_f=10.0$, which translates to $33333$ time-steps. \\
We conduct a convergence analysis for the $2D$ $M1$ closures of both network architectures in Fig.~\ref{fig_convergence_plot_2d_periodic}. The convergence of the input convex neural network levels of at $\landauO(10^{-3})$ and the convergence of the monotonic network at $\landauO(10^{-2.5})$, which is in line with the findings of the $1D$ closures. The size of the spatial grid is chosen correspondingly.
Figure~\ref{fig_period2D} shows a snapshot of the flow field computed with the benchmark solver and Fig.~\ref{fig_period_err} displays snapshots of the relative error at each grid cell of the flow field at the same iteration as the benchmark solver in Fig.~\ref{fig_period2D}. The relative errors of both neural networks exhibit periodic behavior and are in the range of $\landauO(10^{-2})$ or lower. Similarly to the $1D$ test cases, the input convex architecture is again slightly more accurate than the monotonic counterpart. \\
Figure~\ref{fig_MRE_entropy}a) and b) display the relative norm error of both $\alpha_u$ and the moment $u$ of both neural network architectures at each time step  of the simulation averaged over the whole computational domain.
First, one can observe that in both figures again the relative error of the monotonic neural network is slightly bigger than the error of the input convex neural network.
Second, we can see that in the first time steps of the simulation, the error increases from $\landauO(10^{-4})$ to $\landauO(10^{-2})$ in case of the moments, respectively $\landauO(10^{-3})$ to $\landauO(10^{-1.5})$ in case of the Lagrange multipliers. After this initial increase, the error stays stable for the reminder of the simulation. The oscillations in the error curves stem from the periodic nature of the system's solution, in which the distance to $\partial\mathcal{R}$ of the appearing moments changes periodically as well.\\
\begin{figure}
 \centering
      \begin{minipage}{0.48\textwidth}
      \centering
            \includegraphics[width=\textwidth]{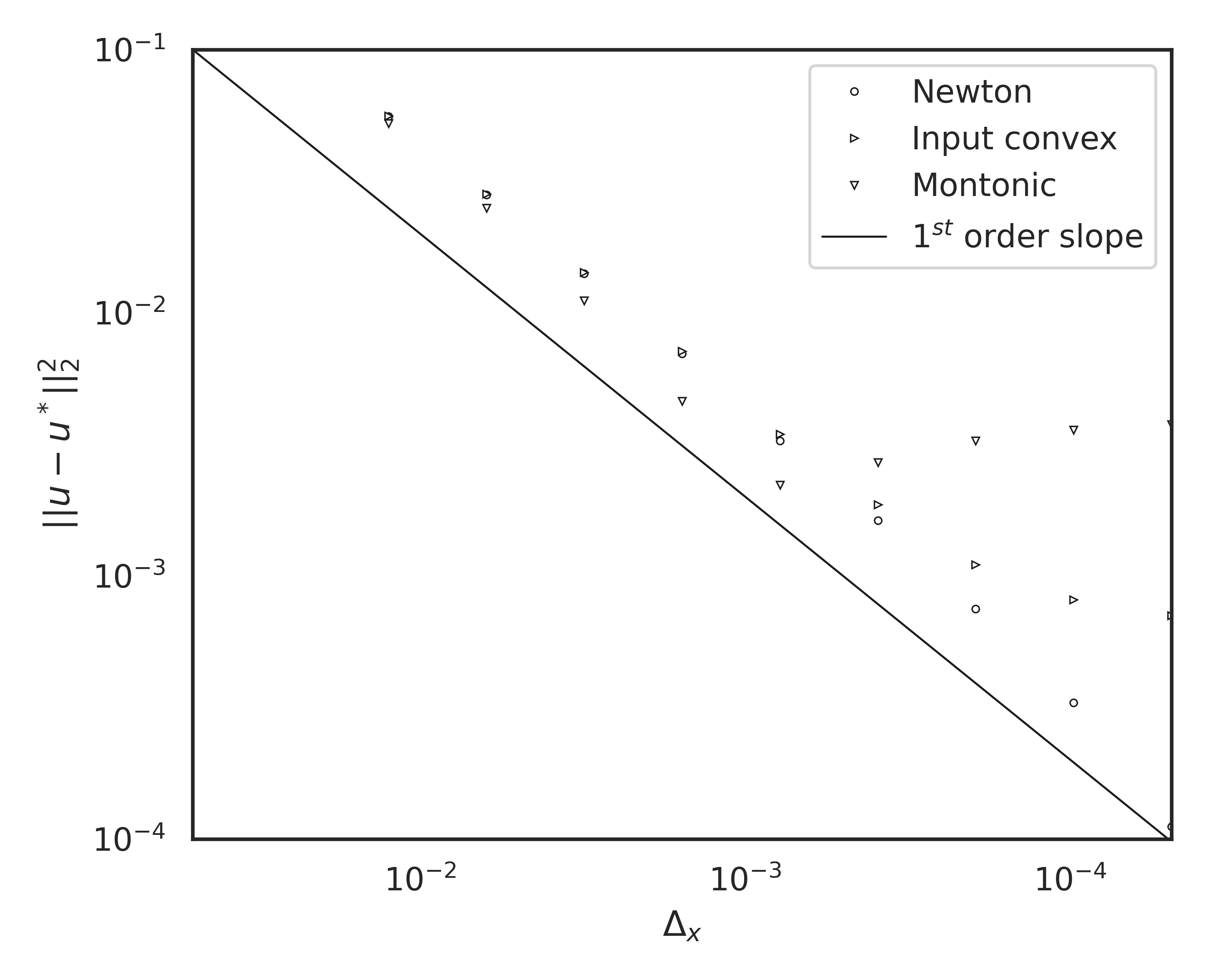}
               \caption{Comparison of the convergence rate of a first order finite volume solver with different closures. The ground truth $u^*$ is given by the finest Newton based solution. The spatial cell size is denoted by $\Delta_x$.}
    \label{fig_convergence_plot_2d_periodic}
   \end{minipage}
    \begin{minipage}{0.48\textwidth}
          \centering
         \includegraphics[width=0.94\textwidth]{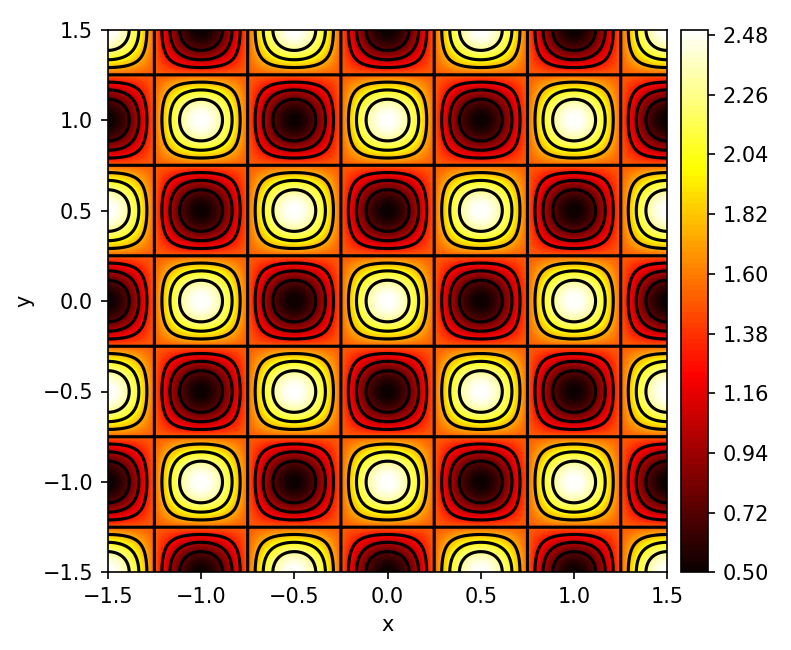}
         \caption{Snapshot of the benchmark solution of the $2D$ $M_1$ test case. The colorbar indicates the value of $u_0$ at a grid cell.}
          \label{fig_period2D}
            \end{minipage}
\end{figure}
\begin{figure}
    \centering
      \begin{minipage}{0.49\textwidth}
          \centering
            \includegraphics[width=\textwidth]{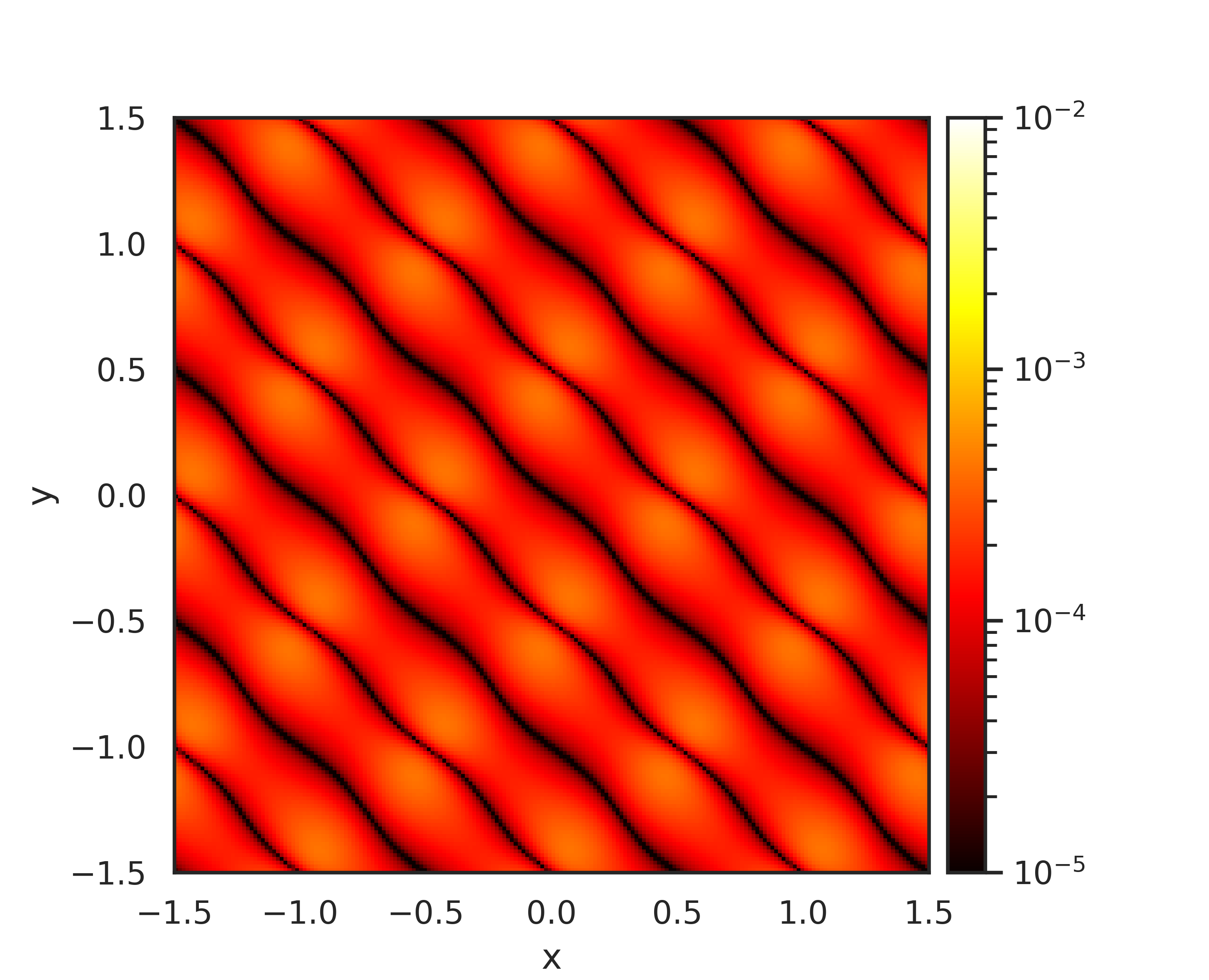}
           {\\ a) Input convex neural network }
   \end{minipage}
    \begin{minipage}{0.49\textwidth}
        \centering
            \includegraphics[width=\textwidth]{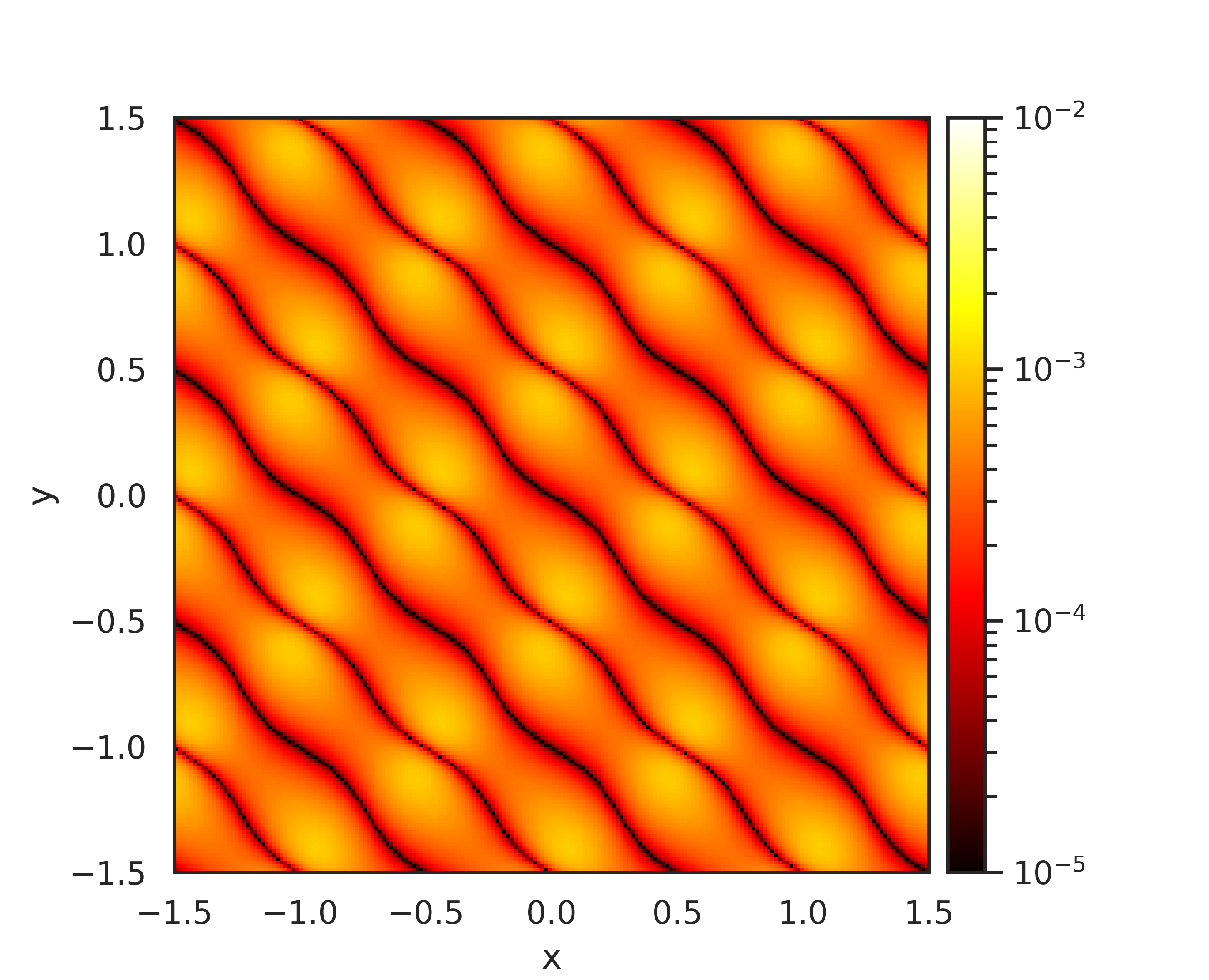}
             {\\ b) Monotonic neural network}
    \end{minipage}
    \caption{Snapshot of the relative norm error of $u_{\theta}$ with respect to the benchmark solution in the $2D$ $M_1$ test case. The colorbar indicates the value of $\norm{u-u_\theta}_2/\norm{u}_2$ at agrid cell.}\label{fig_period_err}
\end{figure}
\begin{figure}
    \centering
      \begin{minipage}{0.49\textwidth}
          \centering
            \includegraphics[width=\textwidth]{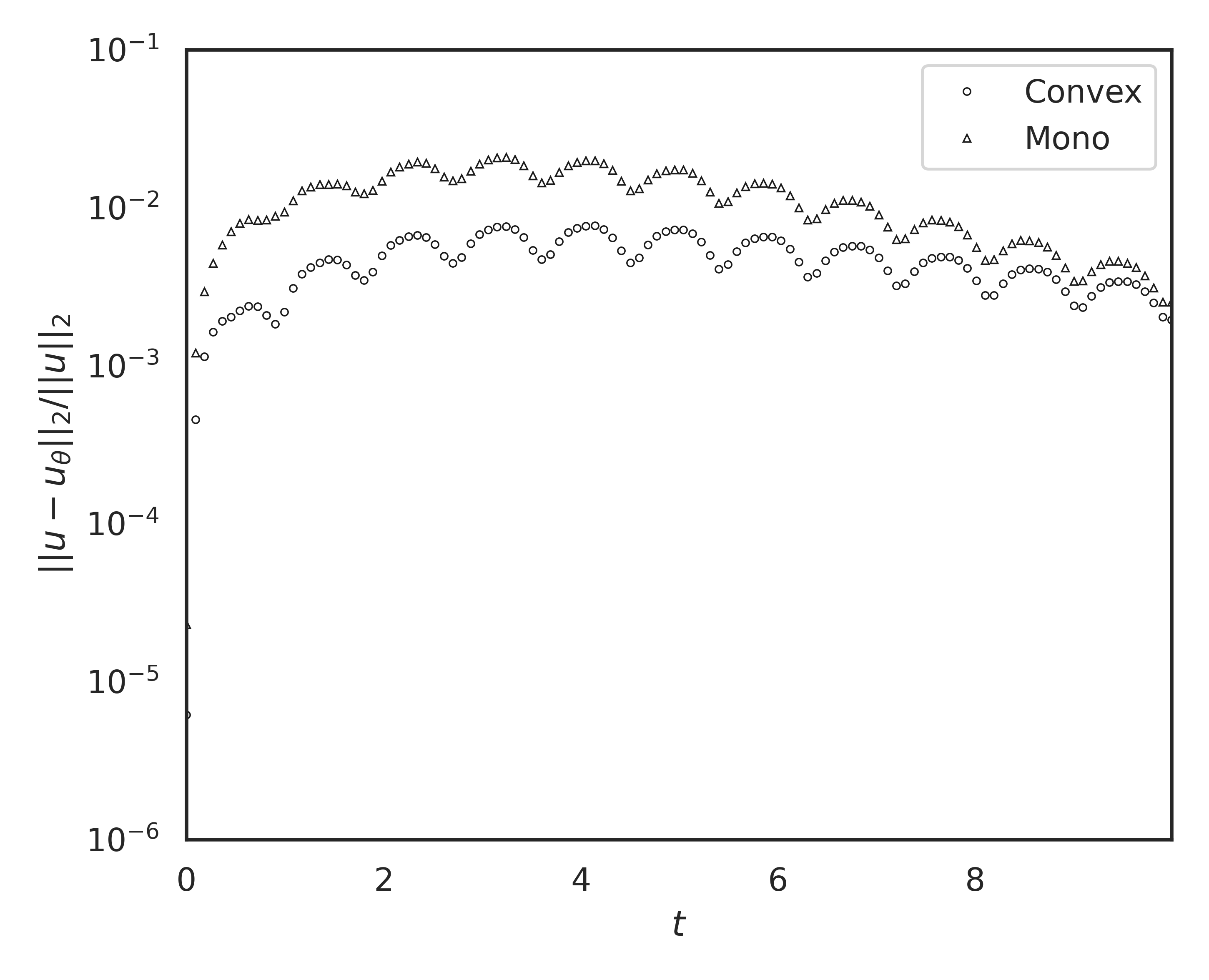}
           {\\ a) Relative error of $u_\theta$ at each time step}
   \end{minipage}
    \begin{minipage}{0.49\textwidth}
        \centering
            \includegraphics[width=\textwidth]{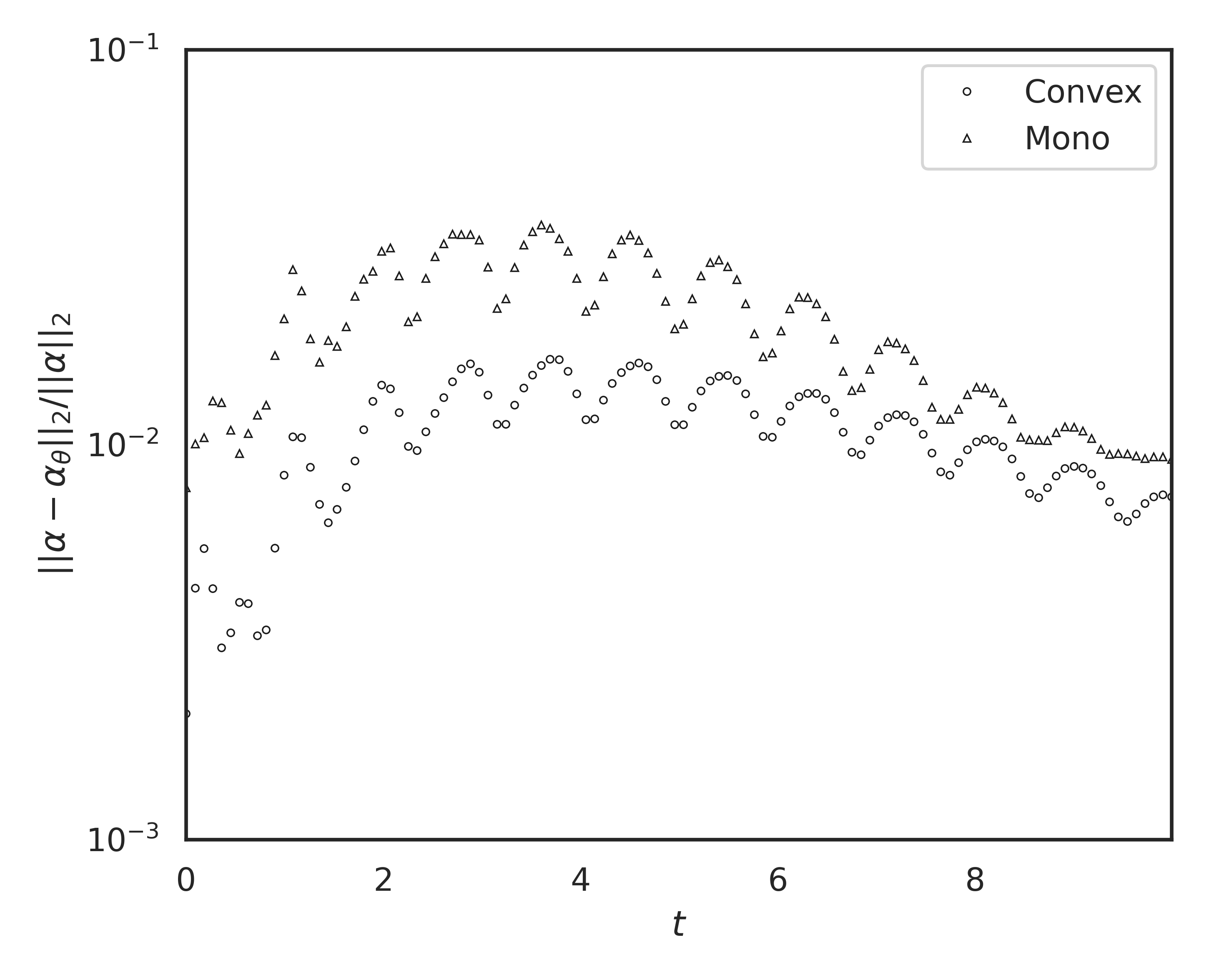}
             {\\ b) Relative error of $\alpha_\theta$ at each time step}
    \end{minipage}
    \caption{Mean over the spatial grid of the relative error of $u_\theta$ with respect to the benchmark solution over time. For better readability only a fraction of the time steps are displayed.}\label{fig_MRE_entropy}
\end{figure}
\begin{figure}
 \centering
      \begin{minipage}{0.49\textwidth}
            \includegraphics[width=\textwidth]{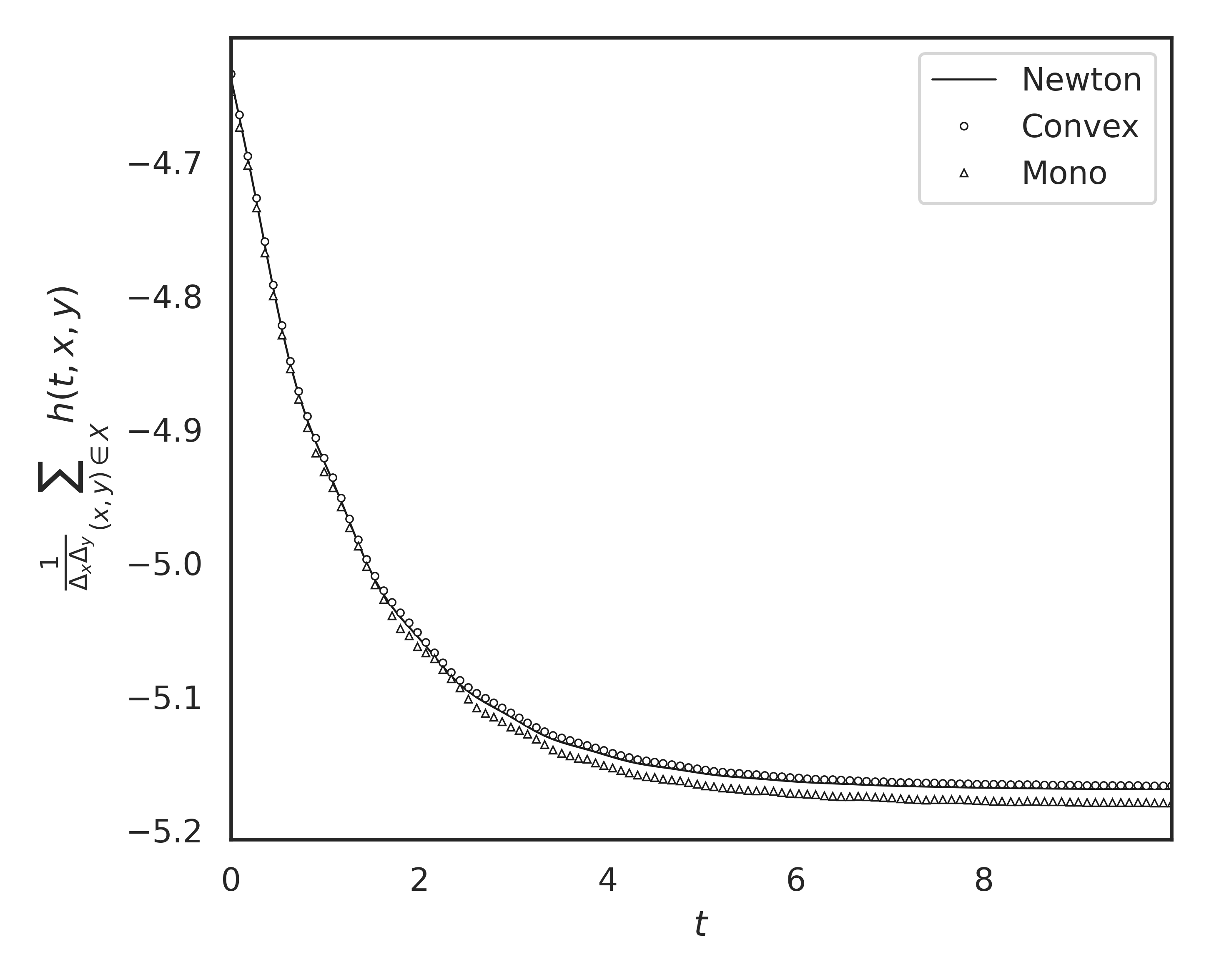}
   \end{minipage}
    \caption{Comparison of the total entropy of the systems using neural network closures and Newton based closures. For better readability only a fraction of the time steps are displayed.}
    \label{fig_sys_entropy}
\end{figure}
Lastly, we analyze the total entropy of the system at each time step. Due to the periodic boundary conditions and $\sigma=\tau=0$, we have no particle sinks or sources in the system and the system is closed. We have chosen the upwind scheme for the numerical flux of the moment system, which is an entropy dissipating scheme. Figure~\ref{fig_sys_entropy} shows the entropy dissipation of the system over time and compares the entropy of the reference solution with the two neural network architectures. All methods are entropy dissipating, however, the input convex neural network exhibits a smaller difference to the reference entropy. 
We can conclude, that the neural network based hybrid solver preserves the structural properties of the reference system and computes the numerical solution within reasonable accuracy.
\section{Summary and Conclusion}
In this paper we addressed the moment system of the Boltzmann equation, its minimal entropy closure, and the challenges of classical numerical approaches.
We introduced two novel neural network based approaches to close the moment hierarchy of the linear Boltzmann equation, once with an input convex neural network that approximates the entropy of the minimal entropy closure, and once with an monotonic neural network that approximates the Lagrange multipliers of the minimal entropy optimization problem.
In the numerical test cases, we have seen that both methods exhibit errors in a similar range, however, the input convex neural network exhibits a slightly better training and test performance than the monotonic neural network approach.
The nature of the entropy minimization problem allows clear definition of the convex set of all possible input data for the neural network. On the other hand, the problem is ill conditioned on the boundary of the  realizable set and thus poses significant challenges for generating training data on and near the boundary of it.\\
We have analyzed the generalization gap of a convex neural network which is trained in Sobolev norm and derived a bound on the generalization gap. Based on this bound we have built a strategy to sample data for entropy closures in arbitrary spatial dimension and moment order. 
We conducted analysis of the trained neural networks in a synthetic test case as well as several simulation tests.
We found a good agreement between the neural network based solutions and the reference solution within the boundaries of the training performance of the neural networks. As expected, the neural network based closures are significantly more efficient in terms of computational time  compared to a Newton solver. \\
Further research will consider treatment of the region near the boundary of the realizable set, where the neural networks exhibit the highest errors.
\section*{Acknowledgements}
The authors acknowledge support by the state of Baden-Württemberg through bwHPC. Furthermore, the authors would like to thank Max Sauerbrey for fruitful discussions about convex functions.
The work of Steffen Schotthöfer is funded by the Priority Programme SPP2298 "Theoretical Foundations of Deep Learning"  by the Deutsche Forschungsgemeinschaft.
The work of  Tianbai Xiao is funded by the Alexander von Humboldt Foundation (Ref3.5-CHN-1210132-HFST-P).
The work of Cory Hauck is sponsored by the Office of Advanced Scientific Computing Research, U.S. Department of Energy, and performed at the Oak Ridge National Laboratory, which is managed by UT-Battelle, LLC under Contract No. De-AC05-00OR22725 with the U.S. Department of Energy. The United States Government retains and the publisher, by accepting the article for publication, acknowledges that the United States Government retains a non-exclusive, paid-up, irrevocable, world-wide license to publish or reproduce the published form of this manuscript, or allow others to do so, for United States Government purposes. The Department of Energy will provide public access to these results of federally sponsored research in accordance with the DOE Public Access Plan (http://energy.gov/downloads/doe-public-access-plan).

\printbibliography
\end{document}